\newtheorem{thm}{Theorem}
\newtheorem{lemma}[thm]{Lemma}
\newtheorem{conjecture}[thm]{Conjecture}
\newtheorem{proposition}[thm]{Proposition}
\newtheorem{cor}[thm]{Corollary}
    \newtheoremstyle{TheoremNum}
        {\topsep}{\topsep}              
        {\itshape}                      
        {}                              
        {\bfseries}                     
        {.}                             
        { }                             
        {\thmname{#1}\thmnote{ \bfseries #3}}
    \theoremstyle{TheoremNum}
    \newtheorem{thmn}{Theorem}
\newcommand{\eps}{\varepsilon}
\newcommand{\cP}{\mathcal P}
\newcommand{\cA}{\mathcal A}
\newcommand{\cC}{\mathcal C}
\newcommand{\cD}{\mathcal D}
\newcommand{\cE}{\mathcal E}
\newcommand{\cF}{\mathcal F}
\newcommand{\cH}{\mathcal H}
\newcommand{\cG}{\mathcal G}
\newcommand{\cM}{\mathcal M}
\newcommand{\cS}{\mathcal S}
\newcommand{\cU}{\mathcal U}
\newcommand{\bC}{\mathbf C}
\newcommand{\floor}[1]{\left\lfloor{#1}\right\rfloor}
\newcommand{\oa}[1]{\overrightarrow{#1}}
\title{Supersaturation, counting, and randomness in forbidden subposet problems}
\author{D\'aniel Gerbner$^1$ \and D\'aniel T. Nagy$^1$ \and Bal\'azs Patk\'os$^{1,2}$ \and  M\'at\'e Vizer$^1$
}
\date{
\small $^1$ Alfr\'ed R\'enyi Institute of Mathematics, Budapest \\
\small $^2$ Moscow Institute of Physics and Technology\\
}
\begin{document}

\maketitle

\begin{abstract}
In the area of forbidden subposet problems we look for the largest possible size $La(n,P)$ of a family $\mathcal{F}\subseteq 2^{[n]}$ that does not contain a forbidden inclusion pattern described by $P$. The main conjecture of the area states that for any finite poset $P$ there exists an integer $e(P)$ such that $La(n,P)=(e(P)+o(1))\binom{n}{\lfloor n/2\rfloor}$.


In this paper, we formulate three strengthenings of this conjecture and prove them for some specific classes of posets. (The parameters $x(P)$ and $d(P)$ are defined in the paper.)

\begin{itemize}
    \item 
    For any finite connected poset $P$ and $\varepsilon>0$, there exists  $\delta>0$ and an integer $x(P)$ such that for any $n$ large enough, and $\mathcal{F}\subseteq 2^{[n]}$ of size $(e(P)+\varepsilon)\binom{n}{\lfloor n/2\rfloor}$, $\mathcal{F}$ contains at least $\delta n^{x(P)}\binom{n}{\lfloor n/2\rfloor}$ copies of $P$.
    \item
    The number of $P$-free families in $2^{[n]}$ is $2^{(e(P)+o(1))\binom{n}{\lfloor n/2\rfloor}}$.
    \item
    For any finite poset $P$, there exists a positive rational $d(P)$ such that if $p=\omega(n^{-d(P)})$, then the size of the largest $P$-free family in $\mathcal{P}(n,p)$ is $(e(P)+o(1))p\binom{n}{\lfloor n/2\rfloor}$ with high probability.
\end{itemize}

    
    
\end{abstract}

\section{Introduction}

Extremal set theory starts with the seminal result of Sperner \cite{Spe28} that was generalized by Erd\H os \cite{Erd45} as follows: if a family $\cF\subseteq 2^{[n]}$ of sets does not contain a nested sequence $F_1\subsetneq F_2 \subsetneq \dots \subsetneq F_{k+1}$ (such nested sequences are called \textit{chains of length $k+1$} or \textit{$(k+1)$-chains} for short), then its size cannot exceed that of the union of $k$ middle levels of $2^{[n]}$, i.e., $|\cF|\le \sum_{i=1}^k\binom{n}{\floor{\frac{n-k}{2}}+i}$. This theorem has many applications and several of its variants have been investigated. 

In the early 80's, Katona and Tarj\'an  \cite{KatTar83} introduced the following general framework to study set families avoiding some fixed inclusion patterns: we say that a subfamily $\cG$ of $\cF$ is a \textit{(non-induced) copy} of a poset $(P, \le)$ in $\cF$, if there exists a bijection $i:P\rightarrow \cG$ such that if $p,q \in P$ with $p\le q$, then $i(p) \subseteq i(q)$. If $i$ satisfies the property that for $p,q\in P$ we have $p\le q$ if and only if $i(p)\subseteq i(q)$, then $\cG$ is called an \textit{induced copy} of $P$ in $\cF$. If $\cF$ does not contain any (induced) copy of $P$, the $\cF$ is said to be \textit{(induced) $P$-free}. The largest possible size of a(n induced) $P$-free family $\cF\subseteq 2^{[n]}$ is denoted by $La(n,P)$ ($La^*(n,P)$). Let $P_k$ denote the $k$-chain, then the result of Erd\H os mentioned above determines $La(n,P_{k+1})$. These parameters have attracted the attention of many researchers, and there are widely believed conjectures in the area (see Conjecture \ref{regisejtes}) that appeared first in \cite{Buk09} and \cite{GriLu09}, giving the asymptotics of $La(n,P)$ and $La^*(n,P)$.

Let $e(P)$ denote the maximum integer $m$ such that for any $i\le n$, the family $\binom{[n]}{i+1}\cup \binom{[n]}{i+2}\cup \dots \cup \binom{[n]}{i+m}$ is $P$-free. Similarly, let $e^*(P)$ denote the maximum integer $m$ such that for any $i\le n$, the family $\binom{[n]}{i+1}\cup \binom{[n]}{i+2}\cup \dots \cup \binom{[n]}{i+m}$ is induced $P$-free.

\begin{conjecture}\label{regisejtes} \ 

\noindent 
(i)  $La(n,P)=(e(P)+o(1))\binom{n}{\lfloor n/2\rfloor}$.

\noindent 
(ii)  $La^*(n,P)=(e^*(P)+o(1))\binom{n}{\lfloor n/2\rfloor}$.
\end{conjecture}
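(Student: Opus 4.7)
The plan is to separate the conjecture into an easy lower bound and a difficult upper bound, handling parts (i) and (ii) in parallel since the induced case differs only by replacing $e(P)$ with $e^*(P)$ and "copy" by "induced copy". For the lower bound, by the very definition of $e(P)$ (resp.\ $e^*(P)$) the union of $e(P)$ consecutive middle layers of $2^{[n]}$ is (induced) $P$-free, and its size is $(e(P)+o(1))\binom{n}{\lfloor n/2\rfloor}$ by the concentration of binomial coefficients around the middle layer, so only the matching upper bound needs real work.

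For the upper bound the natural first attempt is the Lubell / random chain method. For $\cF\subseteq 2^{[n]}$ set
\[
L(\cF)=\sum_{F\in \cF}\frac{1}{\binom{n}{|F|}},
\]
which equals the expected value of $|\cF\cap C|$ for a uniformly random maximal chain $C$ of $2^{[n]}$. Since $|\cF|\le L(\cF)\binom{n}{\lfloor n/2\rfloor}$, it would suffice to prove that every (induced) $P$-free family satisfies $L(\cF)\le e(P)+o(1)$. For the chain posets $P_{k+1}$ this bound is immediate and recovers Erd\H{o}s' theorem; for general $P$ I would try to argue by contradiction, showing that if $L(\cF)>e(P)+\eps$ then on a positive fraction of maximal chains $C$ the intersection $\cF\cap C$ is large enough that, together with sets lying on nearby chains, one can embed $P$ by matching its elements to the appropriate levels.

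The real difficulty, and in my view the main obstacle to a complete proof, is that this Lubell-style averaging is known to fail for some very small posets (the diamond $D_2$ being the canonical example): the maximum Lubell function of a $P$-free family can strictly exceed $e(P)$, so a single random chain is the wrong test object. The refinement I would pursue is to average over richer sub-configurations adapted to $P$: random pairs of comparable sets at prescribed levels, random short multichains, or random subcubes of bounded dimension. For tree posets such an argument is carried out in \cite{Buk09}, and for complete multipartite posets a two-chain average works; combining these partial methods with a supersaturation step (delete few sets from $\cF$ to kill small obstructive subposets, then iterate) would be my strategy. Even so, since the asymptotics of $La(n,D_2)$ is itself open, a full proof of Conjecture \ref{regisejtes} is beyond current techniques; the realistic target is the three strengthenings announced in the abstract, from which the bare conjecture follows for the corresponding classes of posets.
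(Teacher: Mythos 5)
The target statement is Conjecture~\ref{regisejtes}, not a theorem: the paper itself states that it ``has been verified for several classes of posets, but is still open in general,'' and the paper's contribution is to formulate and partially prove three strengthenings, not to settle the conjecture. Your write-up correctly recognizes this. The lower-bound half you give is exactly right and is the standard construction: the $e(P)$ (resp.\ $e^*(P)$) consecutive middle layers are (induced) $P$-free by definition of $e(P)$, and their total size is $(e(P)+o(1))\binom{n}{\lfloor n/2\rfloor}$. Your discussion of the upper bound is also accurate as a survey of the state of the art: the Lubell/random-chain method proves it for chains and for a number of other posets, but the maximum Lubell mass of a $P$-free family can strictly exceed $e(P)$ (the diamond $D_2$ being the standard counterexample), so a single random chain is insufficient, and the asymptotics of $La(n,D_2)$ remains open. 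Since there is no proof in the paper to compare against, the only thing to say is that your assessment is consistent with the paper: this is an open conjecture, the easy direction is the construction you gave, and the hard direction would require ideas beyond pure chain averaging. If anything, you could have pointed out that the conjectured asymptotic is known to be tight in the sense that one cannot hope to improve the leading constant below $e(P)$ (which your lower bound already shows), and that the relevant partial results in this paper are Theorems~\ref{height2}--\ref{diamond}, which attack the conjecture through its supersaturation strengthening (Conjecture~\ref{supersatsejtes}) for trees and diamonds.
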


Conjecture \ref{regisejtes} has been verified for several classes of posets, but is still open in general. For more results on the $La(n,P)$ function, see Chapter 7 of \cite{GerPat18}, and see other chapters for more background on the generalizations considered in this paper.

After determining (the asymptotics of) the extremal size and the structure of the extremal families, one may continue in several directions. \textit{Stability} results state that all $P$-free families having almost extremal size must be very similar in structure to the middle $e(P)$ levels of $2^{[n]}$. \textit{Supersaturation} problems ask for the minimum number of copies of $P$ that a family $\cF\subseteq 2^{[n]}$ of size $La(n,P)+E$ may contain. This is clearly at least $E$, but usually one can say much more. \textit{Counting } problems ask to determine the number of $P$-free families in $2^{[n]}$. As any subfamily of a $P$-free family is $P$-free, therefore the number of $P$-free families is at least $2^{La(n,P)}$. The question is how many more such families there are. Finally, one can address\textit{ random versions} of the forbidden subposet problem. Let $\cP(n,p)$ denote the probability space of all subfamilies of $2^{[n]}$ such that for any $F\subseteq [n]$, the probability that $F$ belongs to $\cP(n,p)$ is $p$, independently of any other set $F'$. What is the size of the largest $P$-free subfamily of $\cP(n,p)$ with high probability\footnote{we say that a sequence of events $E_1,E_2,\dots, E_n,\dots$ holds with high probability (or w.h.p., in short) if $\mathbb{P}(E_n)$ tends to 1 as $n$ tends to infinity}? Clearly, for $p=1$, this is $La(n,P)$. For other values of $p$, an obvious construction is to take a $P$-free subfamily of $2^{[n]}$, and then the sets that are in $\cP(n,p)$ form a $P$-free family. Taking the $e(P)$ middle levels shows that the size of the largest $P$-free family in $\cP(n,P)$ is at least $p(e(P)+o(1))\binom{n}{\floor{n/2}}$ w.h.p.. For what values of $p$ does this formula give the asymptotically correct answer?

\bigskip

In this paper, we will consider supersaturation, counting and random versions of the forbidden subposet problem, mostly focusing on supersaturation results. We will propose three strengthenings of Conjecture \ref{regisejtes} and prove them for some classes of posets. In the remainder of the introduction, we state our results and also what was known before.

\smallskip

The supersaturation version of Sperner's problem is to determine the minimum number of pairs $F\subsetneq F'$ over all subfamilies of $2^{[n]}$ of given size. We say that a family $\cF$ is \textit{centered} if it consists of the sets closest to $n/2$. More precisely, if $F\in \cF$ and $||G|-n/2|<||F|-n/2|$ imply $G\in\cF$. Kleitman \cite{Kle66} proved that among families of cardinality $m$, centered ones contain the smallest number of copies of $P_2$. He conjectured that the same holds for any $P_k$. After several partial results, e.g. \cite{BalWag18,DasGanSud15,DovGriKan14}, the conjecture was confirmed by Samotij \cite{Sam19}. The following is a consequence of the result of Samotij. We will only use it with $k=2$, i.e. the result of Kleitman.

\begin{thm}\label{kchain}
For any $k,t$ with $k-1 \le t$ and $\varepsilon>0$ there exists $n_{k,t,\varepsilon}$ such that if $n\ge n_{k,t,\varepsilon}$, then any family $\cF\subseteq 2^{[n]}$ of size at least $(t+\varepsilon)\binom{n}{\lfloor n/2\rfloor}$ contains at least $\varepsilon \frac{n^t}{2^{t+1}}\binom{n}{\lfloor n/2\rfloor}$ chains of length $k$.
\end{thm}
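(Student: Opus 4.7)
The plan is to deduce the statement from Samotij's theorem (cited just above) and then count $k$-chains in the resulting extremal family. Samotij's result extends Kleitman's chain-minimization inequality from $P_2$ to every $P_k$: among all families in $2^{[n]}$ of a fixed cardinality, centered families minimize the number of $k$-chains. Consequently, it suffices to prove the asserted lower bound on the number of $k$-chains for the centered family $\cC$ of size $(t+\varepsilon)\binom{n}{\lfloor n/2\rfloor}$.

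The centered family $\cC$ has a transparent structure when $n$ is large relative to $t$: the $t$ levels $\binom{[n]}{j}$ closest to $\lfloor n/2\rfloor$ together contain $(t+o(1))\binom{n}{\lfloor n/2\rfloor}$ sets, so $\cC$ consists of these $t$ ``middle'' levels in full, plus $(\varepsilon-o(1))\binom{n}{\lfloor n/2\rfloor}$ additional sets on the next closest level, which I call the \emph{boundary}; by symmetry we may assume these boundary sets all have a common size $b$ lying above $\lfloor n/2\rfloor$. I would then lower bound the number of $k$-chains in $\cC$ by counting only chains $F_1\subsetneq F_2\subsetneq\dots\subsetneq F_{k-1}\subsetneq F_k$ in which $F_k$ is on the boundary and $F_1,\dots,F_{k-1}$ lie on the middle levels. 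For each boundary set $F_k$, the admissible $(k-1)$-chains inside $F_k$ (taking one set at each of some choice of $k-1$ distinct middle-level sizes) have a multinomial-type count; picking the smallest member $F_1$ on the extreme middle level, so that $|F_k|-|F_1|=t$, gives a dominant term of order $\binom{|F_k|}{t}$, which for $n$ large compared to $k$ and $t$ is at least $n^t/2^{t+1}$. Summing over the $\varepsilon\binom{n}{\lfloor n/2\rfloor}$ boundary sets produces the required $\varepsilon n^t/2^{t+1}\cdot\binom{n}{\lfloor n/2\rfloor}$ chains of length $k$.

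The main obstacle is extracting the explicit constant $1/2^{t+1}$: the naive binomial estimate $\binom{|F_k|}{t}\approx (n/2)^t/t!$ introduces extra factorial factors, so one must use a cruder lower bound on the binomial coefficient (absorbing $t$- and $k$-dependent constants into the threshold $n_{k,t,\varepsilon}$) or aggregate several admissible chain shapes to arrive at the clean constant stated in the theorem. Once this bookkeeping is handled, combining the count with Samotij's reduction yields the conclusion.
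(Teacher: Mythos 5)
Your route---reduce via Samotij's theorem to the centered family and then count $k$-chains whose top element lies on the boundary level---is indeed the natural reading of the paper's remark that Theorem~\ref{kchain} is ``a consequence of the result of Samotij'' (the paper offers no written proof to compare against). However, the constant-chasing difficulty you flag at the end is not a bookkeeping nuisance; it is a genuine obstruction that your proposed remedies do not overcome. Take $k=2$. Each boundary set $F_2$ of size $b\approx n/2$ contributes about $\binom{b}{t}\approx (n/2)^t/t!$ pairs $F_1\subsetneq F_2$ with $F_1$ in the middle levels, so summing over the roughly $\varepsilon\binom{n}{\lfloor n/2\rfloor}$ boundary sets yields about $\varepsilon\,\frac{n^t}{2^t\,t!}\binom{n}{\lfloor n/2\rfloor}$ pairs. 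For $t\ge 3$ one has $2^t\,t!>2^{t+1}$, so this falls \emph{short} of the claimed $\varepsilon\,\frac{n^t}{2^{t+1}}\binom{n}{\lfloor n/2\rfloor}$. A multiplicative deficit of $2/t!$ cannot be absorbed by raising the threshold $n_{k,t,\varepsilon}$, and ``aggregating chain shapes'' gives nothing extra when $k=2$, because there is exactly one shape (a single jump of size $t$). The inequality you assert, namely that $\binom{|F_k|}{t}\ge n^t/2^{t+1}$ for large $n$, is simply false once $t\ge 3$.

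More worryingly, this computation suggests that Theorem~\ref{kchain} as printed cannot be correct in its full stated range. Since Samotij's theorem says that the centered family \emph{minimizes} the number of $k$-chains among families of a given cardinality, and the centered family of size $(t+\varepsilon)\binom{n}{\lfloor n/2\rfloor}$ (with boundary placed on one side, which is the minimizing placement) has only about $\varepsilon\,\frac{n^t}{2^t\,t!}\binom{n}{\lfloor n/2\rfloor}$ pairs, the conclusion of the theorem with constant $1/2^{t+1}$ would fail for that very family when $k=2$ and $t\ge 3$. The statement is fine, with some room to spare, in the cases $t=1$ (the boundary contributes $\approx \varepsilon\,\frac{n}{2}\binom{n}{\lfloor n/2\rfloor}>\varepsilon\,\frac{n}{4}\binom{n}{\lfloor n/2\rfloor}$ pairs) and $t=2$; and the paper only ever invokes it with $k=2$, $t=1$ (in the proof of Theorem~\ref{height2}). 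So the right repair is to either restrict the claim or replace $1/2^{t+1}$ by a constant of the form $c(k,t)$ coming out of the centered-family count, e.g.\ involving $1/(2^t\,t!)$ when $k=2$. Your overall strategy is the intended one; the error is in claiming the clean constant follows from it.
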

Apart from the above, the only supersaturation result in the area of forbidden subposet problems is due to Patk\'os \cite{Pat15}. It gives the minimum number of copies of the butterfly poset\footnote{the poset on four elements $a,b<c,d$} $B$ in families of size $La(n,B)+E$ for small values of $E$.

We will investigate the number of copies of $P$ created when the number of additional sets compared to a largest $P$-free family is proportional to the size of the middle level $\binom{[n]}{\floor{n/2}}$. Let $M(n,P)$ denote the number of copies of $P$ in the $e(P)+1$ middle levels of $2^{[n]}$, and let $M^*(n,P)$ denote the number of induced copies of $P$ in the $e^*(P)+1$ middle levels of $2^{[n]}$. The \textit{Hasse diagram} of a poset $P$ is the directed graph with vertex set $P$ and for $p,q \in P$, $(pq)$ is an arc in the Hasse diagram if $p < q$ and there does not exist $z\in P$ with $p < z < q$. We say that $P$ is \textit{connected}, if its Hasse diagram (as a digraph) is weakly connected, i.e., we cannot partition its vertices into two sets such that there is no arc between those sets. The undirected Hasse diagram is the undirected graph obtained from the Hasse diagram by removing orientations of all arcs.

\begin{proposition}
For any connected poset $P$ on at least two elements there exist positive integers $x(P)$ and $x^*(P)$ such that $M(n,P)=\Theta\left(n^{x(P)}\binom{n}{\lfloor n/2\rfloor}\right)$ and $M^*(n,P)=\Theta\left(n^{x^*(P)}\binom{n}{\lfloor n/2\rfloor}\right)$ hold. 
\end{proposition}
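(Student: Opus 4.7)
My plan is to decompose $M(n,P)$ according to the \emph{level pattern} of each copy. For a copy $i\colon P\to 2^{[n]}$ lying in the $e(P)+1$ middle levels $\binom{[n]}{\ell},\ldots,\binom{[n]}{\ell+e(P)}$, set $\sigma(p):=|i(p)|-\ell$, so that $\sigma\colon P\to\{0,\ldots,e(P)\}$ is order-preserving. Only finitely many such level patterns exist (depending on $|P|$ and $e(P)$); writing $M(n,P)=\sum_\sigma N_\sigma(n)$, it suffices to prove that each nonzero summand equals $\Theta\bigl(n^{c_\sigma}\binom{n}{\lfloor n/2\rfloor}\bigr)$ for a positive integer $c_\sigma$, and then to take $x(P):=\max_\sigma c_\sigma$.

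To count $N_\sigma(n)$ I parameterize each copy by its trace $\phi\colon[n]\to\mathcal{U}(P)$, defined by $\phi(x):=\{p\in P:x\in i(p)\}$; order-preservation of $i$ forces $\phi(x)$ to be an upper set of $P$. Writing $n_U:=|\phi^{-1}(U)|$, the prescribed set sizes translate into the linear system $\sum_{U\ni p}n_U=\ell+\sigma(p)$ for each $p\in P$, and for a fixed nonnegative integer solution $(n_U)$ the number of functions $\phi$ realizing it is the multinomial coefficient $\binom{n}{(n_U)_U}$. Summing over the finitely many such solutions, up to negligible subtractions coming from degenerate configurations (where $i(p)=i(q)$ for some distinct $p,q$), yields $N_\sigma(n)$.

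The critical use of connectedness is the bound $|i(p)\triangle i(q)|\le (|P|-1)e(P)$ for all $p,q\in P$, obtained by walking along a shortest path in the undirected Hasse diagram and adding at most $e(P)$ to the symmetric difference per step. Consequently $\Delta:=\bigl|\bigcup_p i(p)\bigr|-\bigl|\bigcap_p i(p)\bigr|$ is bounded by a constant depending only on $P$, forcing both $n_\emptyset$ and $n_P$ to equal $n/2+O(1)$. A routine Stirling-type estimate then gives $\binom{n}{(n_U)_U}=\Theta\bigl(n^\Delta\binom{n}{\lfloor n/2\rfloor}\bigr)$ with $\Delta=n-n_\emptyset-n_P$, while $|P|\ge 2$ combined with injectivity forces $\Delta\ge 1$ on any non-degenerate copy. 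Taking $c_\sigma$ to be the maximum value of $\Delta$ over valid solutions that admit a non-degenerate realization completes the argument for $M(n,P)$; the induced case $M^*(n,P)$ is parallel, using $e^*(P)$ in place of $e(P)$ and restricting the admissible upper sets to those that preserve incomparabilities. The main bookkeeping obstacle is verifying that the maximum $\Delta$ is actually attained by a non-degenerate integer solution so that the lower and upper bounds match, which reduces to finite combinatorial casework depending only on $P$.
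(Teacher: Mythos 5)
Your argument is correct, and it arrives at the same $x(P)$ as the paper (your $\Delta = n - n_\emptyset - n_P$ is exactly the paper's $M_{\mathcal G}-m_{\mathcal G}$, the difference in size between the union and the intersection of the copy), but the route is genuinely different. The paper parameterizes copies by the coarse pair $(A_{\mathcal G},B_{\mathcal G})=(\bigcap G,\bigcup G)$, observes via connectedness that $|B_{\mathcal G}|-|A_{\mathcal G}|\le e(P)|P|$ and that both sizes are within a constant of $n/2$, notes that a given pair $(A,B)$ supports only $O(1)$ copies, and then proves the two $\Theta$-bounds separately: the upper bound by counting admissible pairs $(A,B)$, the lower bound by applying all permutations of $[n]$ to a single extremal copy. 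You instead refine the enumeration: you stratify by level pattern $\sigma$ and by the trace profile $(n_U)_{U\in\mathcal U(P)}$ (which refines $(A,B)$, since $A=\phi^{-1}(P)$ and $B=[n]\setminus\phi^{-1}(\emptyset)$), and compute each stratum exactly as a multinomial coefficient. Connectedness plays the same role in both proofs: it bounds $\Delta$, which in your language forces all $n_U$ with $U\neq\emptyset,P$ to be $O(1)$, so there are only finitely many admissible profiles and the multinomial is $\Theta\bigl(n^\Delta\binom{n}{\lfloor n/2\rfloor}\bigr)$. The payoff of your version is that it gives the asymptotics of $M(n,P)$ stratum by stratum, at the cost of more bookkeeping; the paper's version is shorter and avoids Stirling-type estimates entirely.

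One small imprecision worth noting: you speak of ``negligible subtractions coming from degenerate configurations,'' but in fact no subtraction is needed. For a fixed profile $(n_U)$, whether $i(p)=i(q)$ holds is determined solely by the support $\{U:n_U>0\}$, namely it holds precisely when every $U$ in the support contains $p$ iff it contains $q$; hence \emph{every} realization of a given profile is non-degenerate, or \emph{none} is. So the decomposition of $M(n,P)$ is a clean sum over the ``good'' profiles with no correction terms, and the issue you flag at the end (whether the maximal $\Delta$ is attained by a non-degenerate solution) resolves itself: by definition one takes the maximum only over good profiles, and at least one exists because the $e(P)+1$ middle levels are not $P$-free. The same remark applies in the induced case, where goodness additionally requires that for each incomparable pair $p\not\le q$ there be a $U$ in the support with $p\in U$ and $q\notin U$.
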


\begin{proof}
The proofs of the two statements are analogous, so we include only that of the non-induced version. For a copy $\cG$ of $P$ with all sets from the $e(P)+1$ middle levels of $2^{[n]}$, let $A_\cG=\cap_{G\in \cG}G$, $B_\cG=\cup_{G\in \cG}G$ and $m_\cG=|A_\cG|, M_\cG=|B_\cG|$. Let us define $x(P)$ now. Let $x(P):=\max_\cG\{M_\cG-m_\cG\}$, where the maximum runs through all the copies $\cG$ of  $P$ with all sets from the $e(P)+1$ middle levels of $2^{[n]}$. 

We claim that for any such $\cG$, we have $M_\cG - m_\cG \le e(P)|P|$ (in other words $x(P) \le e(P)|P|$). Indeed, as $P$ is connected, we can go through its elements in an order such that every element is in relation with at least one of the earlier elements. As $\cG$ is from the $e(P)+1$ middle levels, this means that the new element is a set that is either contained in, or contains an earlier set, thus it differs from that set in at most $e(P)$ elements. In the first case, this new element decreases $m_\cG$ by at most $e(P)$, in the second case it increases $M_\cG$ by at most $e(P)$, so we are done. 

Similarly, one can show $|m_\cG-n/2|,|M_\cG-n/2|\le e(P)|P|$. Clearly, for any $A\subseteq B$ with $|B|-|A|\le e(P)|P|$ there is at most a fixed constant number of copies $\cG$ of $P$ such that $A=A_\cG$ and $B=B_\cG$. Finally, the number of pairs $A\subseteq B$, with $|B\setminus A|\le x(P)$, $||A|-n/2|\le e(P)|P|$ and $||B|-n/2|\le e(P)|P|$ is at most $C\binom{n}{\lfloor n/2\rfloor}\binom{\lfloor n/2\rfloor +e(P)|P|}{x(P)}$. This yields $M(n,P)=O(n^{x(P)}\binom{n}{\lfloor n/2\rfloor})$.

For the lower bound, fix a copy $\cG$ with $|B_\cG\setminus A_\cG|=x(P)$. Clearly, for any $A'\subseteq B'$ with $|A'|=|A_\cG|,|B'|=|B_\cG|$ there exists a permutation $\pi$ of $[n]$ with $\pi(A_\cG)=A'$ and $\pi(B_\cG)=B'$. Therefore, such permutations $\pi$ map $\cG$ into distinct copies of $P$.
Their number is clearly at least $\binom{n}{|A_\cG|}\binom{n-|A_\cG|}{|B_\cG\setminus A_\cG|}\ge c \cdot n^{x(P)}\binom{n}{\lfloor n/2\rfloor}$ for some positive constant $c$. 
\end{proof}

Now we can state the first generalization of Conjecture \ref{regisejtes}.

\begin{conjecture}\label{supersatsejtes} \ 

(i) For every poset $P$ and $\varepsilon>0$ there exists $\delta>0$ such that if $\cF\subseteq 2^{[n]}$ is of size at least $(e(P)+\varepsilon)\binom{n}{\lfloor n/2\rfloor}$, then $\cF$ contains at least $\delta\cdot M(n,P)$ many copies of $P$.

(ii) For every poset $P$ and $\varepsilon>0$ there exists $\delta>0$ such that if $\cF\subseteq 2^{[n]}$ is of size at least $(e^*(P)+\varepsilon)\binom{n}{\lfloor n/2\rfloor}$, then $\cF$ contains at least $\delta\cdot M^*(n,P)$ many induced copies of $P$.
\end{conjecture}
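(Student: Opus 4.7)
The plan is an inductive one on $|P|$, using Theorem~\ref{kchain} (in particular Kleitman's theorem, its $k=2,t=1$ instance) as the base case and reusing the bookkeeping from the preceding proposition. I describe (i); the induced version (ii) should follow by the same scheme with the starred parameters, after verifying that the extension step produces genuine incomparabilities wherever $P$ requires them.

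For a chain $P_k$ one has $e(P_k)=k-1$ and $x(P_k)=k-1$, so Theorem~\ref{kchain} with $t=k-1$ gives (i) for chains directly. Now let $P$ be connected with $|P|\ge 3$, pick a leaf $p_0$ of its undirected Hasse diagram, let $q_0$ be its unique Hasse-neighbor, and set $P':=P\setminus\{p_0\}$. Then $P'$ is connected, and since every $P'$-free family is also $P$-free we have $e(P')\le e(P)$; the analysis in the proof of the preceding proposition furthermore shows $x(P')\le x(P)\le x(P')+e(P)$. By induction, for every $\varepsilon'>0$ there is $\delta'>0$ such that any $\cF\subseteq 2^{[n]}$ with $|\cF|\ge (e(P')+\varepsilon')\binom{n}{\lfloor n/2\rfloor}$ contains at least $\delta' M(n,P')$ copies of $P'$.

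Given $\cF$ of size $(e(P)+\varepsilon)\binom{n}{\lfloor n/2\rfloor}$, I would apply the inductive hypothesis with $\varepsilon'=\varepsilon$ to get $\Theta(n^{x(P')}\binom{n}{\lfloor n/2\rfloor})$ copies $i':P'\to\cF$. Assuming $p_0<q_0$ (the opposite case is dual), for each such $i'$ I want to extend by a set $F\in\cF$ with $F\subsetneq i'(q_0)$ realising all the relations $p_0$ has with the elements of $P'$. Averaging over the choice of $i'(q_0)$ and applying Theorem~\ref{kchain} a second time to the trace of $\cF$ inside the power set of $i'(q_0)$ should yield, on average, $\Omega(n)$ candidate extensions per copy of $P'$; after dividing by the bounded overcount (each copy of $P$ gives rise to $O(1)$ copies of $P'$ by deletion of $p_0$), this produces $\Theta(n^{x(P')+1}\binom{n}{\lfloor n/2\rfloor})$ copies of $P$, matching $M(n,P)$ in the generic case $x(P)=x(P')+1$.

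The main obstacle is that the bound $x(P)\le x(P')+e(P)$ need not be tight: when $x(P)>x(P')+1$, a single extension step loses the factor $n^{x(P)-x(P')-1}$ against the target. Closing this gap seems to require either a joint induction that strips off an entire Hasse path at once (so that the exponent drops by the full length of the path) or a careful choice of $p_0$ aligned with an extremal copy $\cG$ witnessing $M_\cG-m_\cG=x(P)$. A further subtlety is that the incomparabilities demanded between $F$ and the images $i'(p)$ for $p\in P'\setminus\{q_0\}$ not related to $p_0$ in $P$ are not automatic for a generic $\cF$. These two difficulties together are what I expect to block a uniform proof and to force the authors to settle the conjecture only for structured families such as tree-like posets, where both issues are vacuous.
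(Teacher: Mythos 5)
The statement you are addressing is Conjecture~\ref{supersatsejtes}, which the paper does not prove in general: it is stated as a conjecture, and the paper only verifies it for specific classes of posets --- height-$2$ tree posets (Theorem~\ref{height2}), monotone tree posets (Theorem~\ref{monotone}), and $K_{s,1,t}$ (Theorem~\ref{s1tsuper}) --- together with partial results for diamonds that lose a factor $\sqrt n$ against the conjectured exponent (Theorem~\ref{diamond}). Your own closing paragraph correctly diagnoses this: you observe that the leaf-peeling induction does not close uniformly and predict that the authors settle only structured families, which is exactly what the paper does. So this is not, and is not claimed to be, a proof of the conjecture, and it should not be compared against a paper proof that does not exist.

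It is still worth noting that your proposed strategy differs from the techniques the paper actually uses in the cases it handles. For height-$2$ tree posets the paper passes to the comparability digraph of $\cF$ and applies an elementary embedding lemma for directed trees with no directed path of length $2$ (Lemma~\ref{tree}), fed by Kleitman's supersaturation (Theorem~\ref{kchain}); for monotone tree posets it constructs a nested sequence $\cF_1\supset\cdots\supset\cF_h$ of families in which every set has many supersets on the next level, then embeds the tree layer by layer (Lemma~\ref{embedding}, with the exponent bookkeeping in Proposition~\ref{monx}); for $K_{s,1,t}$ it uses the Griggs--Li bound (Theorem~\ref{carkat}) to force a positive fraction of $\cF$ to have simultaneously large up- and down-antichain degree. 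None of these is a single-leaf induction on $|P|$. The two obstacles you identify --- the exponent shortfall when $x(P)>x(P')+1$, and the lack of control over induced incomparabilities in the extension step --- are real, and they are essentially why the paper's layered embedding strips off a whole rank at a time rather than one vertex; your suggestion of peeling an entire Hasse path is in the same spirit as Lemma~\ref{embedding}'s design. One small caution: the inequality $x(P')\le x(P)$ you assert is not automatic, since $x(P')$ and $x(P)$ are defined relative to possibly different numbers of middle levels when $e(P')<e(P)$; this is innocuous for your discussion but would need an argument in an actual write-up.
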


We will prove Conjecture \ref{supersatsejtes} for several classes of tree posets.  A poset $T$ is a \textit{tree poset}, if its undirected Hasse diagram is a tree. Note that for any tree poset $T$ of height 2, we have $x(T)=x^*(T)=|T|-1$.

\begin{thm}\label{height2}
Let $T$ be any height 2 tree poset of $t+1$ elements. Then for any $\varepsilon>0$ there exist $\delta>0$ and $n_0$ such that for any $n\ge n_0$ any family $\cF\subseteq 2^{[n]}$ of size $|\cF|\ge(1+\varepsilon)\binom{n}{\lfloor n/2\rfloor}$  contains at least $\delta n^t\binom{n}{\lfloor n/2\rfloor}$ copies of $T$.
\end{thm}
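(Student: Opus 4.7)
\noindent\emph{Proof plan.}
My plan is to combine Kleitman's supersaturation bound for $2$-chains (the $k=2$ case of Theorem~\ref{kchain}) with Sidorenko's inequality for trees, applied to the graph of comparable pairs inside $\cF$. Since $T$ has height $2$, every element of $T$ is either minimal or maximal, so the Hasse diagram of $T$ is a bipartite tree with some bipartition $(B,A)$, $|B|+|A|=t+1$. I would first reduce to $|\cF|\le 2\binom{n}{\lfloor n/2\rfloor}$ by passing to a subfamily of exactly that size (any copy of $T$ found in the subfamily is still a copy in $\cF$). Theorem~\ref{kchain} then guarantees at least $C:=\frac{\varepsilon n}{4}\binom{n}{\lfloor n/2\rfloor}$ pairs $F\subsetneq F'$ with $F,F'\in\cF$.

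Build the auxiliary bipartite graph $H$ on vertex set $X\sqcup Y$, where $X,Y$ are two copies of $\cF$, placing an edge between the $X$-copy of $F$ and the $Y$-copy of $F'$ exactly when $F\subsetneq F'$. Then $|X|=|Y|=|\cF|$ and $|E(H)|=C$, and copies of $T$ in $\cF$ are in $|\Aut(T)|$-to-one correspondence with injective bipartition-preserving homomorphisms $T\to H$ sending $B\to X$ and $A\to Y$. For the total (not necessarily injective) homomorphism count, Sidorenko's inequality for bipartite trees---provable by a short leaf-peeling induction, or by reduction to the undirected tree case---gives
\[
\hom_{\mathrm{bi}}(T,H)\;\ge\;|X|^{|B|}|Y|^{|A|}\left(\frac{|E(H)|}{|X||Y|}\right)^{t}\;=\;\frac{C^t}{|\cF|^{t-1}}\;\ge\;\frac{(\varepsilon/4)^t}{2^{t-1}}\,n^t\binom{n}{\lfloor n/2\rfloor}.
\]

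Finally I would subtract the non-injective bi-homomorphisms, i.e.\ those in which two distinct vertices of $T$ map to the same underlying set of $\cF$. Adjacent vertices of $T$ cannot be identified (doing so would force $F\subsetneq F$), and for any non-adjacent pair $u,v$ the bi-homomorphisms with $\varphi(u)=\varphi(v)$ correspond to homomorphisms of the contracted graph $T/\{u\sim v\}$, a connected graph on $t$ vertices with exactly one cycle, whose total count one can bound by $O(n^{t-1}\binom{n}{\lfloor n/2\rfloor})$ via a Cauchy--Schwarz argument---a lower-order term. Dividing by $|\Aut(T)|$ then yields $\delta n^t\binom{n}{\lfloor n/2\rfloor}$ copies, with $\delta$ depending only on $\varepsilon$ and $T$. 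The main obstacle is this last step: the comparability graph is potentially highly irregular (some sets can have enormously many comparable partners), and Sidorenko-type arguments give only lower bounds, so extracting a clean upper bound on the contribution of non-injective embeddings is likely to require a short cleaning step in which sets of atypically large comparability-degree are first removed from $\cF$ and shown to contribute negligibly to the target count.
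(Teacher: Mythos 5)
Your starting point matches the paper's exactly: both apply Theorem~\ref{kchain} with $k=2$ to conclude that the comparability graph of $\cF$ has $\Omega\bigl(n\binom{n}{\lfloor n/2\rfloor}\bigr)$ edges, reducing the problem to counting copies of a fixed tree in a dense-enough graph. After that the routes diverge. The paper proves a self-contained graph lemma (Lemma~\ref{tree}): take a max-cut to obtain a directed bipartite subgraph with a constant fraction of the edges, iteratively delete vertices of small degree $t-1$ times, and then embed $T$ edge by edge greedily, tracking distinctness explicitly. This produces injective embeddings directly, so no inclusion--exclusion over degenerate maps is ever needed. You instead invoke Sidorenko to lower-bound the total bi-homomorphism count and then plan to subtract the non-injective ones. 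That step is where the argument has a real gap.

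Two concrete problems with the subtraction. First, the claimed bound $O\bigl(n^{t-1}\binom{n}{\lfloor n/2\rfloor}\bigr)$ on the non-injective contribution is simply false as an absolute estimate. Take $T=\vee_3$ (so $t=3$) and $\cF$ equal to the middle two levels together with a single very small set such as $\{1\}$. Identifying two leaves of $\vee_3$ contributes $\sum_F d^+(F)^2\ge d^+(\{1\})^2=\Theta\bigl(\binom{n}{\lfloor n/2\rfloor}^2\bigr)$, vastly exceeding $n^{2}\binom{n}{\lfloor n/2\rfloor}$. The correct target is not an absolute $O(\cdot)$ bound but the ratio statement that the non-injective count is $o(1)$ of $\hom_{\mathrm{bi}}(T,H)$, and this is not a one-line Cauchy--Schwarz consequence for a general tree and general degree sequence. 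Second, your ``unicyclic contraction'' picture only covers identifications within one side of the bipartition of $T$. Since $X$ and $Y$ are disjoint copies of $\cF$, a $B$-vertex and an $A$-vertex of $T$ can never map to the same vertex of $H$, but they can map to the same \emph{underlying set}, and a copy of $T$ in $\cF$ requires all $t+1$ underlying sets to be distinct. Contracting such a cross-side pair does not yield a bipartite graph mapping into $H$; you would have to count odd closed walks in the (non-bipartite) comparability graph, which is a different and messier object. You correctly flag at the end that a degree-cleaning preprocessing step is likely needed; once you commit to cleaning, the greedy embedding of Lemma~\ref{tree} is strictly simpler than Sidorenko plus inclusion--exclusion because it counts injective copies from the outset and never needs the subtraction at all.
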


We say that a tree poset $T$ is \textit{upward (downward) monotone}, if for any $x\in T$ there exists at most 1 element $y \in T$ with $y\prec x$ ($x\prec y$). A tree poset is called \textit{monotone}, if it is either upward or downward monotone. 

\begin{thm}\label{monotone}
For any monotone tree poset $T$ and $\varepsilon>0$, there exist $\delta>0$ and $n_0$ such that for any $n\ge n_0$ any family $\cF\subseteq 2^{[n]}$ of size $|\cF|\ge (h(T)-1+\varepsilon)\binom{n}{\floor{n/2}}$ contains at least $\delta n^{x(T)}\binom{n}{\floor{n/2}}$ copies of $T$.
\end{thm}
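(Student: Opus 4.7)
By complementing every set in $\cF$, we may reduce to the case where $T$ is upward monotone; let $r$ be its unique minimum, write $h=h(T)$ and $t=|T|$, and recall $x(T)=t-1$ (for a monotone tree poset each copy can be realized canonically by choosing the image of $r$ and adding a single new element for each non-root vertex). I would proceed by induction on $t$, with base case $t=2$ being Kleitman's theorem (Theorem \ref{kchain} with $k=2$).

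For the inductive step, I would remove an upward leaf $\ell$ of $T$ whose parent $p$ is chosen so that $T':=T\setminus\{\ell\}$ is again upward monotone with $h(T')\in\{h,h-1\}$; one can always arrange $h(T')=h$ whenever $T$ has more than one element at depth $h-1$. A copy of $T$ in $\cF$ is exactly a copy $\cG'$ of $T'$ together with a distinct $A_\ell\in\cF$ satisfying $A_\ell\supsetneq A_p(\cG')$, so the number of copies of $T$ in $\cF$ is at least
\[
\sum_{\cG'\text{ copy of }T'}\bigl(\deg^+_\cF(A_p(\cG'))-t+1\bigr).
\]
Combined with the inductive count $\Omega(n^{t-2}\binom{n}{\lfloor n/2\rfloor})$ of copies of $T'$, the target $\Omega(n^{t-1}\binom{n}{\lfloor n/2\rfloor})$ follows as soon as a positive fraction of those copies satisfy $\deg^+_\cF(A_p(\cG'))=\Omega(n)$.

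The plan to arrange this is to apply the induction not to $\cF$ but to the ``high-out-degree'' subfamily $\cF^*:=\{A\in\cF:\deg^+_\cF(A)\ge cn\}$, for a suitably small $c=c(\varepsilon,T)>0$. Kleitman's theorem applied to $\cF$ supplies $\Omega(n\binom{n}{\lfloor n/2\rfloor})$ 2-chains, from which one argues that $|\cF^*|\ge(h(T')-1+\varepsilon')\binom{n}{\lfloor n/2\rfloor}$ for some $\varepsilon'>0$. The $\Omega(n^{t-2}\binom{n}{\lfloor n/2\rfloor})$ copies of $T'$ supplied by the induction all sit inside $\cF^*$, so every image $A_p(\cG')\in\cF^*$ has $\deg^+_\cF(A_p(\cG'))\ge cn$, giving $\Omega(n)$ extensions apiece and the claimed count of copies of $T$.

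The main obstacle is precisely this cleanup step when $|\cF|$ is close to the threshold $(h-1+\varepsilon)\binom{n}{\lfloor n/2\rfloor}$ and $h(T')\ge 2$: a one-shot, Kleitman-only bound only guarantees $|\cF^*|=\Omega(\varepsilon\binom{n}{\lfloor n/2\rfloor})$, which is far too weak to sustain the induction. I expect one must either iterate the cleanup while repeatedly invoking Theorem \ref{kchain} to control the loss in 2-chain count at each stage, or isolate a near-extremal regime where $\cF$ is close to a union of $h-1$ middle levels plus an $\varepsilon\binom{n}{\lfloor n/2\rfloor}$-sized perturbation and count copies of $T$ directly via the canonical layered embedding supplied by the perturbation. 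A secondary subtlety is the choice of leaf: by picking $\ell$ with $h(T')=h$ whenever possible, one avoids unnecessarily weakening the inductive threshold.
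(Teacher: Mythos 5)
There is a fundamental error: for monotone tree posets of height at least $3$ the identity $x(T)=|T|-1$ is false. Proposition~\ref{monx} gives $x(T)=|T|-1+\sum_{\ell}(h(T)-r(\ell))$, summing over leaves $\ell$; for example, the upward monotone poset with relations $a<b$, $a<c<d$ has $|T|-1=3$ but $x(T)=4$. The parenthetical justification you give misses that a leaf $\ell$ of rank $r(\ell)<h(T)$ may be mapped to any superset inside the $h(T)$ middle levels, giving $\Theta\bigl(n^{h(T)-r(\ell)+1}\bigr)$ choices rather than $\Theta(n)$. The inductive scheme of peeling off one leaf at a time and extending by a single arbitrary superset gains only one power of $n$ per step, so it caps out at $n^{|T|-1}$ and cannot reach the target $n^{x(T)}$ --- even ignoring the cleanup problem.

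The cleanup gap you flag is indeed the crux: a single application of Theorem~\ref{kchain} cannot show $|\cF^*|\ge(h(T')-1+\varepsilon')\binom{n}{\lfloor n/2\rfloor}$ from $|\cF|\ge(h(T)-1+\varepsilon)\binom{n}{\lfloor n/2\rfloor}$, and neither of the fixes you sketch is carried out. The paper does not induct on $|T|$. It constructs in one pass a nested chain $\cF\supseteq\cF_1\supset\cdots\supset\cF_{h(T)}$, where $F$ survives from $\cF_{i-1}$ to $\cF_i$ iff a uniformly random maximal chain through the interval $[F,[n]]$ is expected to meet $\cF_{i-1}$ in at least $1+\varepsilon'$ places, with $\varepsilon'=\varepsilon/(2h)$. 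A LYM/Lubell argument, partitioning maximal chains by their lowest member of $\cG_i:=\cF_{i-1}\setminus\cF_i$, shows each pruning step removes at most $(1+\varepsilon')\binom{n}{\lfloor n/2\rfloor}$ sets, hence $|\cF_{h(T)}|\ge(\varepsilon/2)\binom{n}{\lfloor n/2\rfloor}$. The survival condition forces every $F\in\cF_i$ to have $\Omega(n)$ supersets in $\cF_{i-1}$ and $\Omega(n^{i-1})$ supersets in $\cF_1$ (the hypotheses of Lemma~\ref{embedding}), and the embedding lemma then places the root of $T$ in $\cF_{h(T)}$, non-leaves one level at a time, and each leaf $\ell$ anywhere in $\cF_1$ above its predecessor, gaining $n^{h(T)-r(\ell)+1}$ --- exactly what produces the exponent $x(T)$. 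Your plan, in which every vertex contributes only $\Theta(n)$, would even with a working cleanup recover only the height-$2$ case, which the paper handles separately (Theorem~\ref{height2}) by a different graph-embedding argument.
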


The complete multipartite poset $K_{r_1,r_2,\dots,r_\ell}$ is a poset on $\sum_{i=1}^\ell r_i$ elements $a_{i,j}$ with $i=1,2,\dots,\ell$, $j=1,2,\dots,r_i$ such that $a_{i,j}<a_{i',j'}$ if and only if $i<i'$. The poset $K_{1,r}$ is usually denoted by $\vee_r$, and the poset $K_{r,1}$ is denoted by $\wedge_r$. The poset $K_{s,1,t}$ is a tree poset with $x(K_{s,1,t})=x^*(K_{s,1,t})=s+t$.

\begin{thm}\label{s1tsuper}
For any $s,t\in \mathbb{N}$ and $\varepsilon>0$ there exist $n_0=n_{\varepsilon,s,t}$ and $\delta>0$ such that any $\cF\subseteq 2^{[n]}$ of size at least $(2+\varepsilon)\binom{n}{\lfloor n/2\rfloor}$ with $n\ge n_0$ contains at least $\delta n^{s+t}\binom{n}{\lfloor n/2\rfloor}$ induced copies of $K_{s,1,t}$.
\end{thm}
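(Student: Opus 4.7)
The plan is to establish that $\mathcal{F}$ has many sets $B$ whose downward neighborhood $\mathcal{A}_B=\{A\in\mathcal{F}:A\subsetneq B\}$ and upward neighborhood $\mathcal{C}_B=\{C\in\mathcal{F}:C\supsetneq B\}$ are both large and antichain-rich, so that each such $B$ serves as the center of many induced $K_{s,1,t}$-copies; the antichain conditions on the legs are obtained for free by choosing the legs from single levels.

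First, apply Theorem \ref{kchain} with $(k,t)=(3,2)$: since $|\mathcal{F}|\ge(2+\varepsilon)\binom{n}{\lfloor n/2\rfloor}$, the family $\mathcal{F}$ contains at least $\tfrac{\varepsilon n^{2}}{8}\binom{n}{\lfloor n/2\rfloor}$ 3-chains. Writing $d^{\pm}(B)$ for the number of proper sub/supersets of $B$ in $\mathcal{F}$, this reads $\sum_{B}d^{-}(B)d^{+}(B)\ge cn^{2}\binom{n}{\lfloor n/2\rfloor}$ with $c=\varepsilon/8$. Applying Theorem \ref{kchain} once more with $(k,t)=(2,2)$ gives the two-sided bound $\sum_{B}d^{-}(B)=\sum_{B}d^{+}(B)\ge cn^{2}\binom{n}{\lfloor n/2\rfloor}$, which will be crucial below.

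For each level triple $a<b<c$, set $d^{-}_{a}(B)=|\mathcal{F}\cap L_{a}\cap 2^{B}|$ and $d^{+}_{c}(B)=|\{C\in\mathcal{F}\cap L_{c}:B\subsetneq C\}|$, and
\[
X_{a,b,c}\;=\;\sum_{B\in\mathcal{F}\cap L_{b}}\binom{d^{-}_{a}(B)}{s}\binom{d^{+}_{c}(B)}{t}.
\]
Since the $A_{i}$'s lie in the single level $L_{a}$ (and similarly the $C_{j}$'s in $L_{c}$), these copies are automatically induced, so $\sum_{(a,b,c)}X_{a,b,c}$ is a lower bound for the induced $K_{s,1,t}$-count. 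For $B$ with $d^{-}_{a}(B),d^{+}_{c}(B)\ge\eta n$ (where $\eta=\eta(\varepsilon,s,t)>0$ is small), the elementary bound $\binom{m}{k}\ge(m/2)^{k}/k!$ (for $m\ge 2k$) yields
\[
\binom{d^{-}_{a}(B)}{s}\binom{d^{+}_{c}(B)}{t}\;\ge\;c_{s,t}\,\eta^{s+t-2}\,n^{s+t-2}\,d^{-}_{a}(B)\,d^{+}_{c}(B).
\]
If I can show that a constant fraction of the 3-chain count $\sum_{(a,b,c)}\sum_{B}d^{-}_{a}(B)d^{+}_{c}(B)$ comes from such good $B$'s at appropriate signatures, summing immediately gives $\sum_{(a,b,c)}X_{a,b,c}\ge\delta n^{s+t}\binom{n}{\lfloor n/2\rfloor}$ as desired.

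The main obstacle is exactly this reduction to good $B$'s. A naive attempt to bound the non-good contribution by $\eta n\cdot(\text{2-chain count})$ is too wasteful, since the total 2-chain count in a general $\mathcal{F}$ of the given size can far exceed $O(n)\binom{n}{\lfloor n/2\rfloor}$. Instead, one has to use all three aggregate bounds from Theorem \ref{kchain} simultaneously via a dyadic/LP-type decomposition of $\mathcal{F}$ by the pair $(d^{-}(B),d^{+}(B))$: the two-sided 2-chain bound precisely rules out the pathological case where most mass is asymmetrically placed on cells with $d^{-}(B)$ very large and $d^{+}(B)$ very small (or vice versa), forcing the dyadic mass of $\mathcal{F}$ to concentrate on cells with both coordinates of order $n$. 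Propagating this concentration from the total degrees $d^{\pm}(B)$ to the level-specific degrees $d^{\pm}_{a/c}(B)$, by averaging over signatures near the middle and using LYM-type concentration of $\mathcal{F}$ on middle levels, is the technical heart of the argument.
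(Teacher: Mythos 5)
Your proposal correctly identifies that the goal is to find many sets $B$ that serve as centers of induced $K_{s,1,t}$-copies, but the argument has a genuine gap at exactly the point you flag yourself, and that gap is not a minor technicality. Two separate problems arise. First, Theorem~\ref{kchain} is a statement about (non-induced) \emph{chains}; it tells you $\sum_B d^-(B)d^+(B)$ is large, but gives no control whatsoever on whether the subsets/supersets of a given $B$ form an antichain. A set $B$ can have $d^-(B)=\Theta(n)$ with all those subsets lying on a single saturated chain, contributing nothing to induced $\wedge_s$'s. Second, even if you could deduce that many $B$'s have $d^-(B),d^+(B)=\Omega(n)$, your level-restricted quantities $d^-_a(B)$ need not inherit this: the $\Theta(n)$ subsets could be spread over $\omega(1)$ levels, and pigeonhole only yields $d^-_a(B)=\Omega(1)$ for some $a$, far short of the $\Omega(n)$ you need to make $\binom{d^-_a(B)}{s}=\Omega(n^s)$. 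The ``two-sided 2-chain bound'' you invoke does not help: $\sum_B d^-(B)=\sum_B d^+(B)$ holds for \emph{every} family (both count 2-chains), and a lower bound on this common quantity says nothing about symmetry of the degree distribution across $B$; it neither rules out one-sided concentration nor forces mass onto cells where both coordinates are $\Theta(n)$.

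The paper takes a much shorter route through the Griggs--Li theorem (Theorem~\ref{carkat}), which is precisely the ``antichain-rich'' statement your reduction is trying to prove: an induced $\vee_{r+1}$-free family has size at most $(1+2r/n+O(r/n^2))\binom{n}{\lfloor n/2\rfloor}$, even for $r=\Theta(n)$. Taking $r+1=\varepsilon n/10$, one lets $\cD$ be the sets that are \emph{not} the top of an induced $\wedge_{\varepsilon n/10}$ and $\cU$ the sets that are not the bottom of an induced $\vee_{\varepsilon n/10}$; each of $\cD,\cU$ is induced $\wedge$- resp.\ $\vee$-free (with parameter $\varepsilon n/10$), hence has size at most $(1+2\varepsilon/5)\binom{n}{\lfloor n/2\rfloor}$. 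Since $|\cF|\ge(2+\varepsilon)\binom{n}{\lfloor n/2\rfloor}$, at least $\tfrac{\varepsilon}{5}\binom{n}{\lfloor n/2\rfloor}$ sets lie outside $\cD\cup\cU$, and each of these has a built-in antichain of $\varepsilon n/10$ subsets and a built-in antichain of $\varepsilon n/10$ supersets in $\cF$, immediately giving $\binom{\varepsilon n/10}{s}\binom{\varepsilon n/10}{t}$ induced copies of $K_{s,1,t}$ per such set. The contrast is that Theorem~\ref{carkat} directly supplies the incomparability your approach needs, while Samotij's chain-count supplies comparability---the wrong thing for induced copies---and there is no obvious bridge between the two. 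If you want to salvage a chain-counting route, you would essentially have to reprove the quantitative content of Theorem~\ref{carkat} along the way.
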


We will consider the supersaturation problem for the generalized diamond $D_s$, i.e., the poset on $s+2$ elements with $a<b_1,b_2,\dots,b_s<c$. For any integer $s\ge 2$, let us define $m_s= \lceil\log_2(s + 2)\rceil$ and $m^*_s=\min\{m: s\le  \binom{m}{\lceil m/2\rceil}\}$. Clearly, for any integer $s\ge 2$, we have $e(D_s)=x(D_s)=m_s$ and $e^*(D_s)=x^*(D_s)=m^*_s$. The next theorem establishes a lower bound that is less by a factor of $\sqrt{n}$
than what Conjecture \ref{supersatsejtes} states for diamond posets $D_s$ for infinitely many $s$.

\begin{thm}\label{diamond} \

(i) If $s \in [2^{m_s - 1} -1,2^{m_s} - \binom{m_s}{\lceil \frac{m_s}{2}\rceil}-1]$, then for any $\varepsilon>0$ there exists a $\delta>0$ such that every $\cF\subseteq 2^{[n]}$ with $|\cF|\ge (m_s+\varepsilon)\binom{n}{\lfloor n/2 \rfloor}$ contains at least $\delta \cdot n^{m_s-0.5}\binom{n}{\lfloor n/2 \rfloor}$ copies of $D_s$.

(ii) For any $\varepsilon>0$ there exists a $\delta>0$ such that every $\cF\subseteq 2^{[n]}$ with $|\cF|\ge (4+\varepsilon)\binom{n}{\lfloor n/2\rfloor}$ contains at least $\delta \cdot n^{3.5}\binom{n}{\lfloor n/2 \rfloor}$ induced copies of $D_4$. 

(iii) For any constant $c$ with $1/2<c<1$ there exists an integer $s_c$ such that if $s \ge s_c$ and $s\le c\binom{m^*_s}{\lfloor m^*_s/2\rfloor}$, then the following holds: for any $\varepsilon>0$  there exists a $\delta>0$ such that every $\cF\subseteq 2^{[n]}$ with $|\cF|\ge (m^*_s+\varepsilon)\binom{n}{\lfloor n/2\rfloor}$ contains at least $\delta \cdot n^{m^*_s-0.5}\binom{n}{\lfloor n/2 \rfloor}$ induced copies of $D_s$.
\end{thm}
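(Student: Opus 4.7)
I would treat all three parts with a common template: extract many chains in $\cF$ of the appropriate length via Theorem~\ref{kchain} and then, inside each Boolean interval $[A,B]$ produced by such a chain, count the $\cF$-sets strictly between $A$ and $B$ to manufacture copies of $D_s$. For part~(i), first I would apply Theorem~\ref{kchain} with $t=m_s$ and $k=m_s+1$ to obtain $\Omega_\varepsilon(n^{m_s}\binom{n}{\lfloor n/2\rfloor})$ chains of length $m_s+1$ in $\cF$. After a routine truncation (nearly all of $\cF$ lies within $m_s+O(1)$ consecutive levels about $n/2$), a positive fraction of these chains are \emph{tight}: their bottom $A$ and top $B$ satisfy $|B\setminus A|=m_s$. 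Setting $d(A,B):=|\{C\in\cF:A\subsetneq C\subsetneq B\}|$, the number of $D_s$-copies in $\cF$ with extremes $A$ and $B$ is exactly $\binom{d(A,B)}{s}$, and distinct tight pairs produce distinct copies of $D_s$ since $A,B$ are the unique minimum and maximum of any such copy.

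The heart of the argument is then a double count combined with convexity. Summing $d(A,B)$ over tight pairs counts triples $(A,C,B)\in\cF^3$ with $A\subsetneq C\subsetneq B$ and $|B\setminus A|=m_s$, a quantity that is $\Omega(n^{m_s}\binom{n}{\lfloor n/2\rfloor})$ by another application of Theorem~\ref{kchain} (now with $k=3$, $t=m_s$) together with the concentration of levels near $n/2$. The hypothesis $s\le 2^{m_s}-\binom{m_s}{\lceil m_s/2\rceil}-1$ enters here: if $\cF$ were $D_s$-free, then in every tight interval $[A,B]$ at least $\binom{m_s}{\lceil m_s/2\rceil}$ of the $2^{m_s}-2$ middle points would have to be absent from $\cF$, turning the extremal problem into a deficit-budget estimate. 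A Cauchy--Schwarz / convexity step then converts this $L^1$-bound on the sequence $(d(A,B))$ into a lower bound of $\Omega(n^{m_s-1/2}\binom{n}{\lfloor n/2\rfloor})$ on the number of tight pairs with $d(A,B)\ge s$, and hence on the number of copies of $D_s$ in $\cF$.

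The induced parts (ii) and (iii) follow the same scheme with $m^*_s$ in place of $m_s$ and with the extra requirement that the $s$ middle sets between $A$ and $B$ be pairwise incomparable. The arithmetic hypotheses $\binom{4}{2}=6\ge 4$ in (ii) and $s\le c\binom{m^*_s}{\lfloor m^*_s/2\rfloor}$ in (iii) play the role of the upper bound on $s$ from (i): they guarantee that on the middle level of each rank-$m^*_s$ cube there is enough antichain-room for the averaging argument to carry through.

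The main obstacle is exactly the Cauchy--Schwarz step: passing from an $L^1$-estimate on $(d(A,B))$ to a positive-density estimate on the tight pairs with $d(A,B)\ge s$ intrinsically loses a factor of $\sqrt n$, which is precisely the gap between our bound $n^{m_s-1/2}$ and the $n^{m_s}$ predicted by Conjecture~\ref{supersatsejtes}. Eliminating this gap would require a concentration-type statement asserting that $d(A,B)$ is close to its mean for most tight pairs, which appears beyond the current toolbox. For the induced parts there is the additional, more technical issue of selecting the $s$ middle sets to form an antichain without an extra polynomial-in-$n$ loss; restricting attention to middle-level sets of each small rank-$m^*_s$ cube takes care of this.
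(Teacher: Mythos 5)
Your strategy departs substantially from the paper's, and there is a genuine gap at its core: the claim that, after a ``routine truncation,'' a positive fraction of the $(m_s+1)$-chains produced by Theorem~\ref{kchain} are \emph{tight}, i.e.\ have $|B\setminus A|=m_s$. The only truncation available discards sets with $\bigl||F|-n/2\bigr|\ge n^{2/3}$, which still leaves $\Theta(n^{2/3})$ levels, not $m_s+O(1)$. Worse, a family supported on every other level near $n/2$ (e.g.\ $\bigcup_{i}\binom{[n]}{\lfloor n/2\rfloor+2i}$ for $|i|\le m_s/2$) has size well over $(m_s+\varepsilon)\binom{n}{\lfloor n/2\rfloor}$ yet contains \emph{no} tight pairs at all --- every $(m_s+1)$-chain there has $|B\setminus A|\ge 2m_s$ --- while it plainly contains many copies of $D_s$. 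Theorem~\ref{kchain} controls only the number of $k$-chains, not the gaps between consecutive members, so nothing forces the tightness your counting scheme (and the subsequent triple-count with $k=3$, $t=m_s$) rests on. Your ``deficit-budget'' reading of the upper bound on $s$ is also only a starting observation: converting a per-cube deficit of $\binom{m_s}{\lceil m_s/2\rceil}$ missing middle sets into a global bound on $|\cF|$ is precisely what Lemma~\ref{nonindlem} (Griggs--Li--Lu) accomplishes via a nontrivial chain-counting inequality, and your sketch offers no substitute for it.

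The paper's argument is a single double count over all $n!$ maximal chains $\cC$ of pairs $(F,\cC)$ with $F\in\cF'\cap\cC$, using the min--max partition: each chain is assigned to $(F,F')$, the minimal and maximal member of $\cF'$ on $\cC$. Pairs whose interval $[F,F']$ is $D_s$-free (equivalently $b(F,F')<s$) contribute at most $m_sn!$ by Lemma~\ref{nonindlem}, leaving at least $\frac{\varepsilon}{2}n!$ chain-incidences supported on intervals with $b(F,F')\ge s$. A convexity estimate on $(|F'|-|F|+1)\,|F|!\,(|F'|-|F|)!\,(n-|F'|)!$ then shows each such interval absorbs at most $O_\varepsilon\bigl(n^{-(m_s-1/2)}\,n!\bigr)$ incidences, forcing $\Omega_\varepsilon\bigl(n^{m_s-1/2}\binom{n}{\lfloor n/2\rfloor}\bigr)$ intervals and hence that many copies of $D_s$. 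Note that the $\sqrt n$ loss arises from the spread of level sizes about $\binom{n}{\lfloor n/2\rfloor}$ (Lemma~\ref{binom}), not from any Cauchy--Schwarz step, and that the argument never restricts to $|F'\setminus F|=m_s$ --- it handles arbitrary gap sizes, which is exactly what your restriction to tight pairs cannot do. Parts (ii) and (iii) are identical with Lemma~\ref{plem} in place of Lemma~\ref{nonindlem} and the maximum-antichain statistic $a(F,F')$ in place of $b(F,F')$.
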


Let us turn our attention to counting (induced) $P$-free families. 
As we mentioned earlier, every subfamily of a $P$-free family is $P$-free, therefore $2^{La(n,P)}\ge 2^{(e(P)+o(1))\binom{n}{\floor{n/2}}}$ is a lower bound on the number of such families. Determining the number of $P_2$-free families has attracted a lot of attention. The upper bound $2^{(1+o(1))\binom{n}{\floor{n/2}}}$, asymptotically matching in the exponent the trivial lower bound was obtained by Kleitman \cite{Kle69}. After several improvements, Korshunov \cite{Kor81} determined asymptotically the number of $P_2$-free families.

\begin{conjecture}\label{leszamlsejtes}
(i) The number of $P$-free families in $2^{[n]}$ is $2^{(e(P)+o(1))\binom{n}{\lfloor n/2\rfloor}}$.

(ii) The number of induced $P$-free families in $2^{[n]}$ is $2^{(e^*(P)+o(1))\binom{n}{\lfloor n/2\rfloor}}$.
\end{conjecture}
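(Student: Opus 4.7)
The plan is to obtain both (i) and (ii) by the hypergraph container method, feeding in the supersaturation statements of Conjecture \ref{supersatsejtes} as a black box.

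The lower bound in (i) is immediate: the union of any $e(P)$ consecutive middle levels is $P$-free, has size $(e(P)+o(1))\binom{n}{\lfloor n/2\rfloor}$, and every one of its $2^{(e(P)+o(1))\binom{n}{\lfloor n/2\rfloor}}$ subfamilies is again $P$-free. Part (ii) is analogous with $e^*(P)$ and the induced case.

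For the upper bound in (i), I would encode $P$-freeness as independence in an auxiliary $|P|$-uniform hypergraph $\cH$ on the vertex set $2^{[n]}$, whose hyperedges are precisely the non-induced copies of $P$. The aim is to produce via the Saxton--Thomason / Balogh--Morris--Samotij container theorem a family of containers $\cC_1,\dots,\cC_T\subseteq 2^{[n]}$ such that every $P$-free family lies inside some $\cC_j$, each $|\cC_j|\le (e(P)+\varepsilon)\binom{n}{\lfloor n/2\rfloor}$, and $\log_2 T = o\bigl(\binom{n}{\lfloor n/2\rfloor}\bigr)$. Summing $2^{|\cC_j|}$ over $j$ then delivers the claimed bound for arbitrarily small $\varepsilon$.

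To apply the container theorem one needs codegree control for $\cH$: for each $1\le j\le |P|$ the number of copies of $P$ containing a prescribed $j$-element subfamily of $2^{[n]}$ must be small relative to $|\cH|/\binom{n}{\lfloor n/2\rfloor}$. These bounds follow by exactly the ``spread'' counting used to prove $M(n,P)=\Theta(n^{x(P)}\binom{n}{\lfloor n/2\rfloor})$ in the proposition above: fixing a single set of a copy pins down the intersection $A_{\cG}$ and union $B_{\cG}$ up to at most $x(P)$ coordinates, and the remaining sets are then determined up to a bounded multiplicative constant. The container theorem then outputs containers $\cC_j$ with \emph{few} copies of $P$; to upgrade ``few copies of $P$'' into the desired size bound one invokes the contrapositive of Conjecture \ref{supersatsejtes}(i), which supplies $\delta\cdot M(n,P)$ copies whenever $|\cF|\ge (e(P)+\varepsilon)\binom{n}{\lfloor n/2\rfloor}$. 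The induced case (ii) runs identically with $\cH$ replaced by the hypergraph of induced copies of $P$ and Conjecture \ref{supersatsejtes}(ii) in place of (i).

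The principal obstacle is exactly this reduction: Conjecture \ref{leszamlsejtes} is driven by Conjecture \ref{supersatsejtes}, so as long as the latter remains open in full generality so will the former. In particular, Theorems \ref{height2}, \ref{monotone}, \ref{s1tsuper} and the cases treated in Theorem \ref{diamond} each yield the counting statement for the corresponding class of posets. A secondary technical point is verifying the codegree hypotheses of the container lemma uniformly in $j$; for connected $P$ this is routine, since connectivity together with the $e(P)+1$-level structure forces the sets of any copy of $P$ to lie in a narrow interval, but the bookkeeping has to be handled with care when translating the ``spread'' parameter $x(P)$ into the precise container parameters.
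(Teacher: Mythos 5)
This statement is a \emph{conjecture}; the paper does not prove it in general. What the paper actually establishes is Theorem \ref{s1tcount}, the special cases $P=\vee_r$ and $P=K_{s,1,t}$ (and, by subposet containment, height-$2$ posets with $e(P)=2$). Your proposal is a conditional roadmap, not a proof, and you are right to flag that it rests on Conjecture \ref{supersatsejtes}, which is itself open. That said, there are substantive gaps in the sketch even as a conditional argument, and the paper's actual route is quite different from the one you describe.

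The codegree step is the weak link. The proposition computing $M(n,P)=\Theta(n^{x(P)}\binom{n}{\lfloor n/2\rfloor})$ controls the \emph{total} number of copies, not the codegrees $\Delta_j(\cH)$ that the hypergraph container theorem needs for every $1\le j\le|P|$. Your claim that fixing one set ``pins down $A_\cG$ and $B_\cG$ up to at most $x(P)$ coordinates, and the remaining sets are then determined up to a bounded multiplicative constant'' is not right: fixing $F$ leaves $\Theta(n^{x(P)})$ ways to complete the copy, so $\Delta_1(\cH)=\Theta(n^{x(P)})$, which is of the same order as the \emph{average} degree over the middle band. Whether these codegrees (and the higher $\Delta_j$) are compatible with the container theorem's density requirements, with the tension between vertex set $2^{[n]}$ and the effective support in the middle $O(1)$ levels, is exactly the bookkeeping that you call ``routine'' but do not carry out. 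Until that is done, even the conditional reduction to Conjecture \ref{supersatsejtes} is incomplete, and the claim that Theorems \ref{height2}, \ref{monotone}, \ref{s1tsuper}, \ref{diamond} ``each yield the counting statement for the corresponding class'' does not follow.

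The paper's own proof of Theorem \ref{s1tcount} takes a genuinely different route. It never invokes a general hypergraph container theorem. For $\vee_{r+1}$-free families it constructs a bespoke \emph{graph}-style container algorithm (Theorem \ref{inducedcontainer}), whose engine is Theorem \ref{induceddegree}: a Lubell-function / chain-partition argument showing that any family of size $(t+\varepsilon)\binom{n}{\lfloor n/2\rfloor}$ must contain an induced $\vee_{\delta n^t}$. The greedy algorithm removes such high-weight vertices in two phases, and the uniform parameters fall out without any codegree analysis. For $K_{s,1,t}$ it bypasses containers for $K_{s,1,t}$ entirely, decomposing any induced $K_{s,1,t}$-free $\cF$ as $\cD\cup\cU$ with $\cD$ induced $\wedge_s$-free and $\cU$ induced $\vee_t$-free, and multiplying the two counts from part (i). This decomposition trick is special to $K_{s,1,t}$ and does not generalize, which is one reason the paper states the general case as a conjecture rather than a theorem. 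Your approach, if the codegree hypotheses were verified, would be more uniform but strictly conditional; the paper's approach is narrower but unconditional for the posets it treats.
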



\begin{thm}\label{s1tcount}
(i) The number of induced $\vee_r$-free families is $2^{(1+o(1))\binom{n}{\floor{n/2}}}$.

(ii) The number of induced $K_{s,1,t}$-free families in $2^{[n]}$ is $2^{(2+o(1))\binom{n}{\floor{n/2}}}$.
\end{thm}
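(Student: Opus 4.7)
The plan is to prove both parts by the same two-step argument: a lower bound obtained by counting subfamilies of explicit induced $P$-free structures, and an upper bound via the hypergraph container method of Balogh--Morris--Samotij / Saxton--Thomason, fed by induced supersaturation.

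For the lower bounds, first observe that $e^*(\vee_r)=1$ and $e^*(K_{s,1,t})=2$: an induced copy of $\vee_r$ requires the bottom element to be strictly contained in each top element, ruling out a single level of $2^{[n]}$; an induced copy of $K_{s,1,t}$ requires the middle element to lie strictly between its top and bottom elements, forcing three distinct set sizes. Hence in (i) every subfamily of $\binom{[n]}{\lfloor n/2\rfloor}$ is an antichain and so induced $\vee_r$-free, giving $2^{\binom{n}{\lfloor n/2\rfloor}}$ such families; in (ii) every subfamily of two adjacent middle levels is induced $K_{s,1,t}$-free, giving $2^{(2+o(1))\binom{n}{\lfloor n/2\rfloor}}$.

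For the upper bound, consider the uniform hypergraph $\cH_P$ on vertex set $2^{[n]}$ whose edges are the induced copies of $P$; induced $P$-free families are exactly its independent sets. Applying the hypergraph container theorem produces, for every $\varepsilon>0$, a family $\cC$ of containers with $\log_2|\cC|=o(\binom{n}{\lfloor n/2\rfloor})$ such that every induced $P$-free family lies in some $C\in\cC$ with $|C|\le(e^*(P)+\varepsilon)\binom{n}{\lfloor n/2\rfloor}$; summing $2^{|C|}$ then gives the upper bound $2^{(e^*(P)+o(1))\binom{n}{\lfloor n/2\rfloor}}$. The theorem's hypotheses require (a) an induced supersaturation bound---every family of size at least $(e^*(P)+\varepsilon)\binom{n}{\lfloor n/2\rfloor}$ contains $\Omega(n^{x^*(P)}\binom{n}{\lfloor n/2\rfloor})$ induced copies of $P$---and (b) a codegree bound; the latter is routine, since fixing a few sets of a prospective induced copy restricts the remaining ones to a constant-width band around $n/2$, hence only polynomially many choices out of $2^n$ possibilities.

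For (ii), input (a) is Theorem~\ref{s1tsuper} applied directly. The main obstacle is (i), since Theorem~\ref{height2} only supplies non-induced supersaturation for $\vee_r$, whereas $\cH_{\vee_r}$ encodes induced copies. A non-induced copy that fails to be induced contains two comparable top elements, hence a $3$-chain $F\subsetneq F_i\subsetneq F_j$ together with $r-2$ further supersets of $F$. To convert non-induced supersaturation into the induced version I would group copies by their bottom set $F$ and use an LYM-style estimate to show that among the $\binom{d(F)}{r}$ choices of $r$ proper supersets of $F$ in $\cF$, an $\Omega(1)$ fraction form an antichain whenever $d(F)$ is moderately large; combined with Kleitman's two-chain theorem (Theorem~\ref{kchain}) ensuring that most $F\in\cF$ have $d(F)=\Omega(n)$, this yields the required $\Omega(n^r\binom{n}{\lfloor n/2\rfloor})$ induced copies of $\vee_r$.
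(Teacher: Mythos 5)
The lower-bound half of your argument is fine and matches the paper. The upper bound, however, has two genuine gaps.

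First, the codegree claim for the hypergraph $\cH_P$ is false, and the standard hypergraph container theorem does not apply. You write that ``fixing a few sets of a prospective induced copy restricts the remaining ones to a constant-width band around $n/2$.'' For $\vee_r$ this is not the case: once the bottom set $F$ is fixed, the top sets can be \emph{any} $r$ pairwise-incomparable proper supersets of $F$. A single $F$ near the bottom of $2^{[n]}$ (or even near the bottom of a $\Theta(\sqrt{n})$-band around $n/2$) therefore lies in a number of induced copies of $\vee_r$ comparable to the total number of edges of $\cH_{\vee_r}$, so $\Delta_1$ exceeds $e(\cH)|P|/v(\cH)$ by an enormous factor. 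The degree distribution is badly skewed and the codegree hypotheses of the Balogh--Morris--Samotij / Saxton--Thomason theorem are violated. This is exactly why the paper does \emph{not} build a hypergraph $\cH_P$ and instead constructs a bespoke graph-container algorithm (Theorem~\ref{inducedcontainer}), whose greedy step peels off, at each vertex $F$, the largest antichain in $U_\cG(F)$; this exploits the fact that for $\vee_r$ the ``supersaturation'' is concentrated on a single vertex, which is a feature for a graph container but a fatal bug for a hypergraph one. The feeding lemma for that algorithm is Theorem~\ref{induceddegree}, which produces one huge induced $\vee_{\delta n^t}$ through a single $F$, not a count of $\Omega(n^{x^*(P)}\binom{n}{\lfloor n/2\rfloor})$ well-spread copies.

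Second, the proposed conversion from non-induced to induced supersaturation for $\vee_r$ does not go through. The claim that ``among the $\binom{d(F)}{r}$ choices of $r$ proper supersets of $F$ in $\cF$, an $\Omega(1)$ fraction form an antichain whenever $d(F)$ is moderately large'' is false in general: if the supersets of $F$ in $\cF$ form a chain (or a few chains), no $r$-subset is an antichain. Kleitman's theorem (Theorem~\ref{kchain}) gives many \emph{comparable} pairs, i.e., many sets with many supersets, but says nothing about those supersets being mutually incomparable; an LYM estimate does not produce an $\Omega(1)$ antichain density without extra structural input. The paper sidesteps this entirely via Theorem~\ref{induceddegree}, which is a genuinely induced statement proved by a Lubell-function argument.

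Also note that for part (ii) the paper does not run containers for $K_{s,1,t}$ at all: it decomposes any induced $K_{s,1,t}$-free family as $\cD\cup\cU$ with $\cD$ induced $\wedge_s$-free and $\cU$ induced $\vee_t$-free, and applies part (i) twice. This is both simpler and avoids the (even worse) codegree problems of the $(s+t+1)$-uniform hypergraph you propose.
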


As every height 2 poset $P$ is a non-induced subposet of $K_{|P|,1,|P|}$, Conjecture \ref{leszamlsejtes} (i) is an immediate consequence of Theorem \ref{s1tcount} for those height 2 posets $P$ for which $e(P)=2$.

\medskip

Finally, we turn to random versions of forbidden subposet problems. The probabilistic version of Sperner's theorem was proved by Balogh, Mycroft, and Treglown \cite{BalMycTre14} and Collares and Morris \cite{ColMor16}, independently. It states that if $p=\omega(1/n)$, then the largest antichain in $\cP(n,p)$ is of size $(1+o(1))p\binom{n}{\floor{n/2}}$ w.h.p.. This is sharp in the sense that if $p=o(1/n)$ then the asymptotics is different. Note that as any $k$-Sperner family is the union of $k$ antichains, the analogous statement holds for $k$-Sperner families in $\cP(n,p)$. Both papers used the container method. Hogenson in her PhD thesis \cite{viking} adapted the method of Balogh, Mycroft, and Treglown to obtain the same results for non-induced $\vee_r$-free families.

Let us state a general proposition that gives a range of $p$ when one can have a $P$-free family in $\cP(n,p)$ that is larger than $p(e(P)+o(1))\binom{[n}{\floor{n/2}}$.

\begin{proposition}
For any finite connected poset $P$, the following statements hold.

(i) If $p=o(n^{-\frac{x(P)}{|P|-1}})$, then the largest $P$-free family in $\cP(n,p)$ has size at least $(e(P)+1-o(1))p\binom{n}{\floor{n/2}}$ w.h.p..

(ii) If $p=o(n^{-\frac{x^*(P)}{|P|-1}})$, then the largest induced $P$-free family in $\cP(n,p)$ has size at least $(e^*(P)+1-o(1))p\binom{n}{\floor{n/2}}$ w.h.p..
\end{proposition}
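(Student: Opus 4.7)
My plan is to take the $e(P)+1$ middle levels of $2^{[n]}$ as the candidate family, intersect them with $\cP(n,p)$, and then prune the few copies of $P$ that survive. Write $\mathcal{L}$ for the union of the $e(P)+1$ levels centered at $\floor{n/2}$, so that $|\mathcal{L}|=(e(P)+1-o(1))\binom{n}{\floor{n/2}}$, and set $\mathcal{L}_p:=\mathcal{L}\cap\cP(n,p)$. The first step will be to show by Chernoff that $|\mathcal{L}_p|=(e(P)+1-o(1))p\binom{n}{\floor{n/2}}$ with high probability; this holds whenever $p\binom{n}{\floor{n/2}}\to\infty$, and if $p$ is smaller than that, the quantity to be lower-bounded is already $o(1)$ and the conclusion is vacuous (the empty family is $P$-free).

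The second step is a first-moment calculation. By the proposition preceding the statement, the number of copies of $P$ with all sets in $\mathcal{L}$ is $M(n,P)=\Theta(n^{x(P)}\binom{n}{\floor{n/2}})$. Each fixed copy consists of $|P|$ distinct sets and therefore lies inside $\mathcal{L}_p$ with probability exactly $p^{|P|}$, so the expected number of copies in $\mathcal{L}_p$ is $O(p^{|P|}n^{x(P)}\binom{n}{\floor{n/2}})$. The hypothesis $p=o(n^{-x(P)/(|P|-1)})$ is precisely the rearrangement of $p^{|P|-1}n^{x(P)}=o(1)$, so the expected number of copies is $o(p\binom{n}{\floor{n/2}})$. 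Markov's inequality then yields that the actual number of copies in $\mathcal{L}_p$ is also $o(p\binom{n}{\floor{n/2}})$ w.h.p.

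Finally, conditioning on both high-probability events, remove one set from each copy of $P$ contained in $\mathcal{L}_p$. The remaining subfamily is $P$-free and has size $(e(P)+1-o(1))p\binom{n}{\floor{n/2}}-o(p\binom{n}{\floor{n/2}})=(e(P)+1-o(1))p\binom{n}{\floor{n/2}}$, proving (i). For (ii) the same argument applies verbatim after replacing $e(P)$ by $e^*(P)$, $x(P)$ by $x^*(P)$, $M(n,P)$ by $M^*(n,P)$, and ``copy'' by ``induced copy''; the hypothesis $p=o(n^{-x^*(P)/(|P|-1)})$ gives the correct rearrangement in the first-moment bound.

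I do not anticipate a substantive obstacle here: the proposition is essentially a clean combination of first-moment plus Chernoff, and the only subtle point is to observe that the two probabilistic events --- the concentration of $|\mathcal{L}_p|$ and the Markov bound on the number of copies --- can be intersected to hold simultaneously w.h.p., which is immediate since each fails with probability $o(1)$. The role of the exponent $x(P)/(|P|-1)$ is precisely to balance the expected number of copies, $p^{|P|}n^{x(P)}\binom{n}{\floor{n/2}}$, against the target family size, $p\binom{n}{\floor{n/2}}$, leaving a $(1-o(1))$ factor for the deletion step.
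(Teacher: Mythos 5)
Your proposal is correct and follows essentially the same route as the paper: prune the random intersection $\mathcal{M}_p$ of the $e(P)+1$ middle levels with $\cP(n,p)$, use concentration for $|\mathcal{M}_p|$ and a first-moment/Markov bound (driven by $M(n,P)=\Theta(n^{x(P)}\binom{n}{\lfloor n/2\rfloor})$) to show the expected number of copies of $P$ is $o(p\binom{n}{\lfloor n/2\rfloor})$, then delete. Your added remarks --- the vacuous case $p\binom{n}{\lfloor n/2\rfloor}=O(1)$ and the observation that it suffices to delete one set per copy --- are sensible refinements but do not change the argument.
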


\begin{proof} We only prove $(i)$, the proof of $(ii)$ is similar.
Let us denote the random family of the $e(P)+1$ middle levels after keeping any of its sets with probability $p$ by $\cM_p=\cM \cap \cP(n,p)$. Clearly, $\mathbb{E}(|\cM_p|)=(e(P)+1+o(1))p\binom{n}{\floor{n/2}}$ and thus we have $|\cM_p|=(e(P)+1+o(1))p\binom{n}{\floor{n/2}}$ w.h.p.. Let $X$ be the random variable that denotes the number of copies of $P$ in $\cM_p$. Then we have $\mathbb{E}(X)=\Theta(p^{|P|}n^{x(P)}\binom{n}{\floor{n/2}})$. By the assumption on $p$, we have $p^{|P|}n^{x(P)}\binom{n}{\floor{n/2}}=o(p\binom{n}{\floor{n/2}})$, and so $X=o(|\cM_p|)$ w.h.p., and thus by removing the copies of $P$ from $\cM_p$, we obtain a $P$-free family in $\cP(n,p)$ of size $(e(P)+1-o(1))p\binom{n}{\floor{n/2}}$ w.h.p..
\end{proof}

If $\cM_p$ does not contain a subposet $P'$ of $P$, then it is $P$-free, thus we have the following.

\begin{cor}
For any finite poset $P$, let $d(P)=\min \frac{x(P')}{|P'|-1}$, where $P'$ runs through all connected subposets $P'$ of $P$ with $e(P)=e(P')$. Similarly, let $d^*(P)=\min \frac{x^*(P')}{|P'|-1}$, where $P'$ runs through all connected subposets $P'$ of $P$ with $e^*(P)=e^*(P')$.  Then the following statements hold.

\smallskip 

(i) If $p=o(n^{-d(P)})$, then the largest $P$-free family in $\cP(n,p)$ has size at least $(e(P)+1-o(1))p\binom{n}{\floor{n/2}}$ w.h.p.

\smallskip 

(ii) If $p=o(n^{-d^*(P)})$, then the largest induced $P$-free family in $\cP(n,p)$ has size $(e^*(P)+1-o(1))p\binom{n}{\floor{n/2}}$ w.h.p.
\end{cor}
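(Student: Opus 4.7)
The plan is to deduce the corollary directly from the preceding proposition by passing from $P$ to an optimally chosen connected subposet, using the elementary monotonicity observation already flagged in the line preceding the corollary: if $P'$ is a (weak) subposet of $P$, then every $P'$-free family is automatically $P$-free, because any bijection $i : P \to \cG \subseteq \cF$ realising a copy of $P$ restricts to a bijection $P' \to i(P') \subseteq \cF$ realising a copy of $P'$. An analogous statement holds for induced copies whenever $P'$ is an induced subposet of $P$, since restricting an induced embedding of $P$ to the vertex set of $P'$ is an induced embedding of $P'$.

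For part (i), I would select a connected subposet $P^* \subseteq P$ with $e(P^*) = e(P)$ that attains the minimum $d(P) = x(P^*)/(|P^*|-1)$; such a $P^*$ exists because $d(P)$ is defined as a minimum over a nonempty family. Then the hypothesis $p = o(n^{-d(P)})$ is identical to the hypothesis $p = o(n^{-x(P^*)/(|P^*|-1)})$ required to invoke the preceding proposition on the connected poset $P^*$. That proposition produces w.h.p.\ a $P^*$-free subfamily of $\cP(n,p)$ of size $(e(P^*)+1-o(1))p\binom{n}{\lfloor n/2\rfloor}$, and since $e(P^*) = e(P)$ and $P^*$-freeness implies $P$-freeness, this family is $P$-free of the required size.

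Part (ii) proceeds by exactly the same recipe, using the induced version (ii) of the preceding proposition: I would select a connected induced subposet $P^* \subseteq P$ with $e^*(P^*) = e^*(P)$ attaining $d^*(P) = x^*(P^*)/(|P^*|-1)$, apply the induced proposition to $P^*$, and invoke the induced analogue of the monotonicity observation above.

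There is no genuine obstacle: the analytic work is done entirely by the preceding proposition, and the substantive content of the corollary is simply the recognition that, within the class of connected subposets preserving the $e$-value (respectively $e^*$-value), one is free to pass to the subposet that most weakens the probability threshold, thereby enlarging the range of $p$ for which the middle-levels construction is asymptotically optimal.
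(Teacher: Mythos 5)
Your proposal is correct and follows exactly the route the paper intends: the sentence immediately preceding the Corollary (``If $\cM_p$ does not contain a subposet $P'$ of $P$, then it is $P$-free, thus we have the following'') encapsulates the same monotonicity observation you state, and the corollary is then obtained by applying the preceding Proposition to the connected subposet $P^*$ minimizing $x(P')/(|P'|-1)$ (respectively $x^*(P')/(|P'|-1)$) among those with matching $e$-value. Your added care in distinguishing weak subposets (for the non-induced part) from induced subposets (for the induced part) is appropriate and makes explicit a small imprecision in the paper's wording, but the argument is substantively identical.
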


We conjecture that the bounds above are sharp.

\begin{conjecture}
For any finite connected poset $P$ the following statements hold.

\smallskip 

\noindent (i) If $p=\omega(n^{-d(P)})$, then the largest $P$-free family in $\cP(n,p)$ has size $(e(P)+o(1))p\binom{n}{\floor{n/2}}$ w.h.p..

\smallskip 

\noindent (ii) If $p=\omega(n^{-d^*(P)})$, then the largest induced $P$-free family in $\cP(n,p)$ has size $(e^*(P)+o(1))p\binom{n}{\floor{n/2}}$ w.h.p..
\end{conjecture}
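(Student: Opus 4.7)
The plan is to attack the conjecture via the hypergraph container method, which has driven every previous random forbidden-subposet result (Balogh--Mycroft--Treglown and Collares--Morris for Sperner, Hogenson for $\vee_r$). I would set up the auxiliary hypergraph $H$ on vertex set $2^{[n]}$ whose hyperedges are the (induced) copies of $P$, so that (induced) $P$-free subfamilies are precisely independent sets of $H$. The goal is to produce a collection $\mathcal{C}$ of containers such that every independent set lies inside some $C\in\mathcal{C}$, every $|C|\le (e(P)+\varepsilon)\binom{n}{\floor{n/2}}$, and $\log_2|\mathcal{C}|=o\bigl(p\binom{n}{\floor{n/2}}\bigr)$. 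A union bound over $\mathcal{C}$ together with Chernoff concentration of $|C\cap\cP(n,p)|$ then bounds the largest $P$-free family in $\cP(n,p)$ by $(e(P)+o(1))p\binom{n}{\floor{n/2}}$ w.h.p., matching the easy lower bound.

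To feed the container lemma (Saxton--Thomason or Balogh--Morris--Samotij) we need two ingredients: a \emph{supersaturation} statement, namely Conjecture \ref{supersatsejtes}, which guarantees that any family of size $(e(P)+\varepsilon)\binom{n}{\floor{n/2}}$ contains $\Omega\bigl(n^{x(P)}\binom{n}{\floor{n/2}}\bigr)$ copies of $P$; and \emph{codegree bounds} on $H$, controlling for each $j$ the number of hyperedges containing a prescribed $j$-set. The codegree side is largely shape-dictated: the argument in the Proposition defining $x(P)$ already forces every copy of $P$ to have its intersection and union within $e(P)|P|$ of $n/2$, so $j$ fixed sets lying on a common copy determine the remaining $|P|-j$ up to an extra $n^{x(P)-j}$ factor, which is the balance the container machinery requires. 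The threshold $p=n^{-d(P)}$ is then natural: for a connected subposet $P'$ with $e(P')=e(P)$ one has $\mathbb{E}[\#P'\text{ in }\cM_p]=\Theta\bigl(p^{|P'|}n^{x(P')}\binom{n}{\floor{n/2}}\bigr)$, and $p=\omega(n^{-x(P')/(|P'|-1)})$ is precisely where these copies become too numerous to delete without shrinking $\cM_p$.

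The main obstacle is two-fold. First, Conjecture \ref{supersatsejtes} is itself open in general, and the random conjecture should need at least as much; a reasonable intermediate goal is to establish the random conjecture exactly for those $P$ where supersaturation has been proved in this paper---height-$2$ tree posets, monotone tree posets, $K_{s,1,t}$, and the diamonds handled by Theorem \ref{diamond}. Second, the minimum defining $d(P)$ ranges over all connected subposets $P'$ with $e(P')=e(P)$, so the codegree and balanced-supersaturation conditions must be verified uniformly in $P'$, not merely for $P$ itself. Upgrading the plain supersaturation in Conjecture \ref{supersatsejtes} to the balanced form that the container lemma demands is the point where most of the new technical work would concentrate and where input beyond the present toolbox is most likely needed.
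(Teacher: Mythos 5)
The statement you were handed is a \emph{conjecture}, and the paper itself does not prove it; the only affirmative results in this direction are Theorem~\ref{easy}, which handles $\vee_{r+1}$ and $K_{s,1,t}$. So there is no ``paper's own proof'' to match against; what can be compared is your plan against the paper's proof of those special cases and against the honest assessment of what is still missing.

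Your outline is the standard container-method roadmap, and it is essentially the route the paper takes for the cases it does resolve. The difference worth noting is that the paper does not invoke the general hypergraph container lemma of Saxton--Thomason or Balogh--Morris--Samotij. Instead it builds a bespoke \emph{graph}-container algorithm (Theorem~\ref{inducedcontainer}), exploiting the fact that ``induced $\vee_{r+1}$-free'' is a local condition on the up-set of each member. The key technical move is replacing the $\cG$-degree of a set by its $\cG$-weight (the size of the largest antichain in its up-set), which makes the greedy removal argument go through in the induced setting, fed by the Lubell-function supersaturation statement Theorem~\ref{induceddegree}. For $K_{s,1,t}$ the paper then sidesteps a genuine $K_{s,1,t}$-container lemma entirely by decomposing a $K_{s,1,t}$-free family into a $\wedge_s$-free part and a $\vee_t$-free part and applying part (i) twice. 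Your plan is more general (and more ambitious) in aiming at full hypergraph containers; the paper's plan is more elementary and exploits the tree/star structure of $\vee_r$ and $K_{s,1,t}$.

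Where you should be careful is the claim that a reasonable intermediate goal is to deduce the random conjecture for all the posets where supersaturation is established in the paper. That step is not automatic. Containers demand \emph{balanced} supersaturation (codegree control as you correctly flag), not merely the counting statements in Theorems~\ref{height2}, \ref{monotone}, \ref{s1tsuper}, \ref{diamond}, and the paper does not upgrade any of them to the balanced form except implicitly in the $\vee_{r}$ case via Theorem~\ref{induceddegree}. Moreover, the diamond supersaturation in Theorem~\ref{diamond} is off by a factor of $n^{0.5}$ from the conjectured $n^{x(D_s)}$, so even with perfect balancing it would only reach $p=\omega(n^{-d(D_s)+\delta})$ for some $\delta>0$, not the sharp threshold. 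You also correctly observe that $d(P)$ is a minimum over connected subposets $P'$ with $e(P')=e(P)$, so supersaturation and codegree bounds must be verified uniformly over that family, which the paper does not address. In short: the strategy is right and matches the paper's spirit, but what you wrote is a research program, not a proof, and it overstates how directly the existing supersaturation theorems would transfer.
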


\begin{thm}\label{easy}
 If $p=\omega(1/n)$, then the following are true.

\smallskip 

\noindent    (i) For any integer $r\ge 0$, the largest induced $\vee_{r+1}$-free family in $\cP(n,p)$ has size $(1+o(1))p\binom{n}{\lfloor n/2\rfloor}$ w.h.p..

\smallskip 

\noindent (ii)    For any pair $s,t\ge 1$ of integers, the largest induced $K_{s,1,t}$-free family in $\cP(n,p)$ has size $(2+o(1))p\binom{n}{\lfloor n/2\rfloor}$ w.h.p.. 
\end{thm}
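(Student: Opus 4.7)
The plan is to reduce (ii) to (i) via a structural decomposition of induced $K_{s,1,t}$-free families and to prove (i) by adapting the hypergraph container method used to establish the random $k$-Sperner theorem. For (ii), given $\cF\subseteq\cP(n,p)$ induced $K_{s,1,t}$-free, let $\cF_{\mathrm{up}}=\{A\in\cF:\text{the strict supersets of $A$ in $\cF$ have antichain width $<t$}\}$ and $\cF_{\mathrm{down}}=\cF\setminus\cF_{\mathrm{up}}$. If $A\in\cF_{\mathrm{down}}$ had $s$ pairwise incomparable strict subsets $C_1,\dots,C_s\in\cF$ in addition to the $t$ pairwise incomparable strict supersets $B_1,\dots,B_t\in\cF$ already witnessing $A\in\cF_{\mathrm{down}}$, then $\{C_1,\dots,C_s,A,B_1,\dots,B_t\}$ would be an induced copy of $K_{s,1,t}$ in $\cF$ (the relations $C_i\subsetneq B_j$ forced by $C_i\subsetneq A\subsetneq B_j$ are themselves part of $K_{s,1,t}$), a contradiction. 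Hence the strict subsets of $A$ in $\cF$ have antichain width $<s$, so $\cF_{\mathrm{down}}$ is induced $\wedge_s$-free; by its very definition $\cF_{\mathrm{up}}$ is induced $\vee_t$-free. Applying (i) with $r+1=t$ to $\cF_{\mathrm{up}}$ and the dual of (i) (obtained by complementing sets) with $r+1=s$ to $\cF_{\mathrm{down}}$ gives $|\cF_{\mathrm{up}}|,|\cF_{\mathrm{down}}|\le(1+o(1))p\binom{n}{\lfloor n/2\rfloor}$ w.h.p., summing to the desired $(2+o(1))p\binom{n}{\lfloor n/2\rfloor}$. The matching lower bound is realized w.h.p.\ by $\cP(n,p)\cap\bigl(\binom{[n]}{\lfloor n/2\rfloor}\cup\binom{[n]}{\lfloor n/2\rfloor+1}\bigr)$, induced $K_{s,1,t}$-free because no element of the two middle levels can play the middle vertex of an induced $K_{s,1,t}$.

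For (i), the base case $r=0$ (induced $\vee_1$-free means antichain) is the random antichain result of Balogh--Mycroft--Treglown and Collares--Morris. For $r\ge1$, induced $\vee_{r+1}$-free families may contain arbitrarily long chains and hence are not $k$-Sperner for any bounded $k$, so the random $k$-Sperner bound does not apply directly. My plan is to run the hypergraph container method on the $(r+2)$-uniform hypergraph whose edges are the induced copies of $\vee_{r+1}$ in $2^{[n]}$, using the following induced analogue of Theorem \ref{height2} as supersaturation input: any $\cF\subseteq 2^{[n]}$ with $|\cF|\ge(1+\varepsilon)\binom{n}{\lfloor n/2\rfloor}$ contains $\Theta_\varepsilon(n^{r+1}\binom{n}{\lfloor n/2\rfloor})$ induced copies of $\vee_{r+1}$, which should follow by the same averaging template since $\vee_{r+1}$ is a height-$2$ tree poset with $x^*(\vee_{r+1})=r+1$. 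Feeding this into the container theorems of Balogh--Morris--Samotij or Saxton--Thomason should produce at most $2^{o(\binom{n}{\lfloor n/2\rfloor})}$ containers, each of size $(1+o(1))\binom{n}{\lfloor n/2\rfloor}$, covering every induced $\vee_{r+1}$-free family; Chernoff concentration on each container together with a union bound (valid since $p=\omega(1/n)$ makes $p\binom{n}{\lfloor n/2\rfloor}$ dominate the logarithm of the number of containers) then yields (i).

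The main obstacle is verifying the co-degree and ``spreading'' conditions needed to invoke the container theorems on the induced-$\vee_{r+1}$ hypergraph, which is not regular across the levels of $2^{[n]}$. A possible alternative route avoiding containers is induction on $r$: bound the maximal antichain $\cM\subseteq\cF$ by the $r=0$ case, and then argue that in $\cP(n,p)$ the residual family $\cF\setminus\cM$ inherits an averaged $\vee_r$-type constraint to which the inductive hypothesis applies. Either route ultimately reduces (i) to transferring a deterministic supersaturation or stability statement for induced $\vee_{r+1}$-free families into the random setting, which is the principal technical work.
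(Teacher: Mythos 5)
Your reduction of (ii) to (i) matches the paper's in essence (you use a partition $\cF_{\mathrm{up}}\cup\cF_{\mathrm{down}}$ where the paper uses a cover $\cD\cup\cU$, but the two are interchangeable), and your lower-bound construction for (ii) is correct. The gap is in (i), and it is the gap you flag yourself: you propose to feed the $(r+2)$-uniform hypergraph of induced $\vee_{r+1}$'s into the general hypergraph container theorems of Balogh--Morris--Samotij or Saxton--Thomason, but you do not verify the degree/codegree hypotheses these theorems require, and that is where all the work is. Moreover, the supersaturation input you quote (every family of size $(1+\varepsilon)\binom{n}{\lfloor n/2\rfloor}$ has $\Theta(n^{r+1}\binom{n}{\lfloor n/2\rfloor})$ \emph{induced} copies of $\vee_{r+1}$) is not what Theorem \ref{height2} gives, which is for non-induced copies, and the transfer is not automatic: a long chain in $\cF$ produces many non-induced $\vee_{r+1}$'s but no induced ones, since the top elements are pairwise comparable. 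Your inductive alternative is also broken as stated: deleting a maximal antichain from an induced $\vee_{r+1}$-free family need not leave an induced $\vee_r$-free family, and you do not say what the ``averaged $\vee_r$-type constraint'' on the residue would be.

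The paper avoids the general hypergraph container machinery altogether. Its key observation is that induced $\vee_{r+1}$-freeness is a local \emph{bounded-weight} condition: defining the $\cF$-weight of $F$ as the maximum size of an antichain among the strict supersets of $F$ in $\cF$, one has that $\cF$ is induced $\vee_{r+1}$-free iff every $F\in\cF$ has $\cF$-weight $\le r$. This lets the paper run the greedy \emph{graph}-container algorithm of Balogh--Mycroft--Treglown in Hogenson's adaptation (Theorem \ref{inducedcontainer}), with weight playing the role of degree: in each round pick the set of largest weight, put it and the antichain witnessing that weight into the fingerprint, and delete them. The quantitative engine is Theorem \ref{induceddegree}, a short Lubell-function averaging argument showing that any family of size $(t+\varepsilon)\binom{n}{\lfloor n/2\rfloor}$ contains a set that is the bottom of an induced $\vee_{\delta n^t}$; this single ``max weight is large'' statement is exactly what the greedy algorithm needs to keep shrinking the ground set fast, and it is much weaker (and easier) than a full supersaturation count. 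This sidesteps both of the obstacles you identify: no codegree verification is needed, and only a one-vertex stability statement is required rather than a global induced supersaturation theorem.
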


The structure of the paper is as follows: in the next section, we gather some earlier results that will be used as tools in the proofs of our theorems. Also, we will obtain an induced $\vee_{r+1}$-free container lemma. Section 3, in three subsections, contain the proofs of our theorems.

\section{Preliminaries}

Here and in the next section, we will assume that $n$ is large enough whenever it is necessary.

\begin{lemma}\label{binom}
For any $1 \le l\le n/2$ we have $\sum_{i=0}^{l-1}\binom{n}{i}\le 2\sqrt{n}\binom{n}{l}$.
\end{lemma}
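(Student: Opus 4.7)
The plan is to split the argument into two cases depending on how close $l$ is to $n/2$: a geometric-series bound when $l$ is bounded away from $n/2$, and a global $2^n$ bound combined with Stirling when $l$ is near $n/2$.

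In the generic range $l \le n/2 - \sqrt{n}/8$, I would first observe that $\binom{n}{i}/\binom{n}{i+1} = (i+1)/(n-i)$ is increasing in $i$, so for every $i \le l-1$ it is bounded by $\alpha := l/(n-l+1) < 1$. Iterating this gives $\binom{n}{l-k} \le \alpha^k \binom{n}{l}$, and summing as a geometric series yields
\[\sum_{i=0}^{l-1}\binom{n}{i} \le \binom{n}{l}\cdot \frac{\alpha}{1-\alpha} = \frac{l}{n-2l+1}\binom{n}{l}.\]
Under the Case~1 hypothesis, $n - 2l + 1 \ge \sqrt{n}/4$, so the prefactor is at most $(n/2)/(\sqrt{n}/4) = 2\sqrt{n}$, and the claim follows directly.

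For the complementary range $l > n/2 - \sqrt{n}/8$ the geometric bound degenerates as $\alpha \to 1$, so I would instead combine three easy ingredients: the trivial symmetry-based bound $\sum_{i=0}^{l-1}\binom{n}{i} \le 2^{n-1}$ (valid whenever $l \le n/2$), Stirling's formula $\binom{n}{\lfloor n/2\rfloor} = (1+o(1))\sqrt{2/(\pi n)}\cdot 2^n$, and the elementary estimate $\binom{n}{l}/\binom{n}{\lfloor n/2\rfloor} \ge e^{-c}$ for a small absolute constant $c$. The last estimate comes from the product formula for the ratio: writing $s := \lfloor n/2\rfloor - l$, one has $\binom{n}{\lfloor n/2\rfloor}/\binom{n}{l} = \prod_{k=0}^{s-1}(1 + (2k+1)/(\lfloor n/2\rfloor - k))$, whose logarithm is $O(s^2/n) = O(1)$ in the regime $s \le \sqrt{n}/8$. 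Putting the three pieces together gives
\[\sum_{i=0}^{l-1}\binom{n}{i} \le 2^{n-1} \le (1+o(1))\tfrac{1}{2}\sqrt{\pi n/2}\;e^{c}\binom{n}{l},\]
and the numerical constant $\tfrac{1}{2}\sqrt{\pi/2}\,e^{c}$ is comfortably below $2$ (in fact below $1$) for $n$ large, so the bound $2\sqrt{n}\binom{n}{l}$ is more than satisfied.

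The only mildly subtle point — and thus the main obstacle, such as it is — is choosing the case-split threshold so that the constants in both regimes line up; the value $\sqrt{n}/8$ is selected precisely so that the Case~1 prefactor comes out to exactly $2\sqrt{n}$. Everything else is routine binomial manipulation, and the standing assumption from just before the lemma that $n$ is sufficiently large absorbs the $o(1)$ error terms from Stirling in Case~2.
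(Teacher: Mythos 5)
Your argument is correct, and the constants do work out: in the range $l \le n/2 - \sqrt{n}/8$ the ratio $\alpha/(1-\alpha) = l/(n-2l+1) < (n/2)/(\sqrt{n}/4) = 2\sqrt{n}$, and near the middle the crude bound $2^{n-1}$ together with Stirling and the $e^{O(s^2/n)}$ comparison of $\binom{n}{l}$ with $\binom{n}{\lfloor n/2\rfloor}$ (with $s < \sqrt{n}/8$, so $s^2/n < 1/64$) gives a constant far below $2\sqrt{n}$ once $n$ is large, which the paper is entitled to assume. However, the route is genuinely different from the paper's. You split into two regimes and use a term-by-term geometric series in the first regime plus Stirling in the second. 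The paper instead proves a single uniform inequality, $\binom{n}{k-m}/\binom{n}{k} \le e^{-1}$ for $m = \lfloor\sqrt{n}\rfloor$ and all $k < n/2$, and then partitions $\sum_{i=0}^{l-1}\binom{n}{i}$ into the $m$ residue classes modulo $m$; each class is a geometric series with ratio at most $e^{-1}$ dominated by a term of size at most $\binom{n}{l}$, so the total is at most $m \cdot \frac{1}{1-e^{-1}}\binom{n}{l} \le 2\sqrt{n}\binom{n}{l}$. The paper's version has the advantage of avoiding any case split and any appeal to Stirling, since the $\sqrt{n}$-step contraction is valid uniformly up to $n/2$; it is also self-contained and not asymptotic in nature beyond the rounding in $\lfloor\sqrt{n}\rfloor$. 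Your version has the advantage of using only elementary term-by-term comparisons in the main range and makes the role of the threshold $\sqrt{n}/8$ transparent, at the cost of an extra Stirling-based case. Both are valid proofs.
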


\begin{proof}
Let $m=\lfloor \sqrt{n}\rfloor$ and observe that for any $k<n/2$ we have 
\[
\frac{\binom{n}{k-m}}{\binom{n}{k}}\le \frac{\binom{n}{\lfloor n/2\rfloor-m}}{\binom{n}{\lfloor n/2\rfloor}}\le  \prod_{i=1}^m\frac{\lfloor n/2\rfloor-i+1}{\lceil n/2\rceil+i}\le e^{-\sum_{i=1}^m\frac{2i}{n}}\le e^{-1}.
\]
So dividing $\sum_{i=0}^{l-1}\binom{n}{i}$ into $m$ subsums depending on the residue of $i$ mod $m$, we obtain subsums that can be upper bounded by geometric progressions of quotient $e^{-1}$.
\end{proof}

We denote by $\bC_k$ the set of chains of length $k+1$, i.e. the set of maximal chains in $2^{[k]}$.

\begin{lemma}[Griggs, Li, Lu, in the proof of Theorem 2.5 in \cite{GriLiLu12}]
\label{nonindlem} \ 

If $s \in [2^{m_s - 1} -1,2^{m_s} - \binom{m_s}{\lceil \frac{m_s}{2}\rceil}-1]$, and $\cG \subseteq 2^{[k]}$ is a $D_{s}$-free family of sets, then the number of pairs $(G,\cC)$ with $G \in \cG\cap \cC$ and $\cC \in \bC_k$ is at most $m_sk!$.
\end{lemma}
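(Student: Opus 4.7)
The bound $m_sk!$ is really a LYM-type inequality in disguise: counting the pairs $(G,\cC)$ by the first coordinate gives
\[
\#\{(G,\cC):G\in\cG\cap\cC,\,\cC\in\bC_k\}=\sum_{G\in\cG}|G|!\,(k-|G|)!=k!\sum_{G\in\cG}\binom{k}{|G|}^{-1},
\]
so the statement is equivalent to $\sum_{G\in\cG}\binom{k}{|G|}^{-1}\le m_s$. My plan is to prove this reformulated inequality by picking a uniformly random maximal chain $\cC\in\bC_k$ and bounding the expectation $\mathbb{E}[X]$ of $X:=|\cC\cap\cG|$, since the LYM sum equals precisely this expectation.

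The first point to register is that a pointwise chain bound $X\le m_s$ is not available: $D_s$ has width $s\ge 2$, so a single chain can perfectly well meet $\cG$ in many more than $m_s$ sets without forcing a copy of $D_s$ to appear. The argument therefore has to be genuinely global. The idea I would pursue is to analyse \emph{excess pairs} on the chain. Enumerate $\cC\cap\cG$ in increasing order as $G_1\subsetneq\cdots\subsetneq G_t$ and, for each $i\ge m_s$, pair up $G_i$ with $G_{i-m_s+1}$. Such a pair satisfies $|G_i\setminus G_{i-m_s+1}|\ge m_s$, so the Boolean interval $[G_{i-m_s+1},G_i]$ contains at least $2^{m_s}-2\ge s$ strictly intermediate sets, and the $D_s$-freeness of $\cG$ forces at most $s-1$ of these to lie in $\cG$. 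Summing the per-chain excess counts and exchanging the order of summation converts this local observation into a global inequality; averaging the resulting expression over all $\cC$ should collapse to $\mathbb{E}[X]\le m_s$, hence the LYM bound.

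The main obstacle, and the reason for the tight range $s\in[2^{m_s-1}-1,\,2^{m_s}-\binom{m_s}{\lceil m_s/2\rceil}-1]$, is that the slack in the exchange-of-summation step is measured exactly by the size $\binom{m_s}{\lceil m_s/2\rceil}$ of the widest layer of a Boolean interval of rank $m_s$. The lower bound $s\ge 2^{m_s-1}-1$ is what certifies that $m_s$ is the correct LYM constant (otherwise the $m_s-1$ middle levels would themselves be $D_s$-free, making the statement false). The upper bound on $s$ is more delicate: if $s$ exceeds $2^{m_s}-\binom{m_s}{\lceil m_s/2\rceil}-1$ then a $D_s$-free subset of $[G_{i-m_s+1},G_i]$ can contain the entire middle layer together with both endpoints, and the averaging inequality loses too much. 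Designing the weighting so that the clean integer constant $m_s$ pops out (instead of $m_s+o(1)$) is the technical heart of the argument; assuming this is arranged correctly, multiplying the final LYM bound through by $k!$ delivers the claimed bound $m_sk!$ on the pair count.
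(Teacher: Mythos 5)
The LYM reformulation in your first display is correct, and you are right that no pointwise bound $|\cC\cap\cG|\le m_s$ holds, so the argument must be global. But from there the proposal breaks down, both in detail and in substance.

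First, the pairing has an off-by-one that kills the only concrete step. Pairing $G_i$ with $G_{i-m_s+1}$ gives an index gap of $m_s-1$, so $|G_i\setminus G_{i-m_s+1}|\ge m_s-1$, not $\ge m_s$. The interval $[G_{i-m_s+1},G_i]$ therefore has at least $2^{m_s-1}-2$ (not $2^{m_s}-2$) strictly intermediate sets, and since the hypothesis gives $s\ge 2^{m_s-1}-1$, we have $2^{m_s-1}-2\le s-1$: the $D_s$-freeness of $\cG$ imposes no constraint at all on such a pair. Even if you shift the pairing to $G_i$ versus $G_{i-m_s}$ so that the interval genuinely has $\ge 2^{m_s}-2\ge s+1$ intermediate sets, the conclusion you extract — at most $s-1$ of them lie in $\cG$ — is not in tension with the chain data: the chain itself contributes only $m_s-1$ intermediate $\cG$-sets to that interval, and $m_s-1\le s-1$. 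So each pair gives a true but vacuous statement, and no ``excess'' is certified.

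Second, the crucial step — ``summing the per-chain excess counts and exchanging the order of summation... averaging... should collapse to $\mathbb{E}[X]\le m_s$'' — is not an argument; it is a description of what an argument would have to accomplish. You acknowledge this yourself (``assuming this is arranged correctly''). The clean constant $m_s$, and the precise role of $\binom{m_s}{\lceil m_s/2\rceil}$ in the upper end of the admissible range, are exactly what a proof has to produce, and nothing in the proposal produces them. The commentary on why the range endpoints should matter is plausible heuristics, not a derivation.

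For comparison: the paper does not prove this lemma — it is quoted from Griggs, Li and Lu \cite{GriLiLu12} — but the mechanism there (and the one this paper reuses in the proof of Theorem \ref{diamond}) is the \emph{min--max partition} of maximal chains. One groups chains $\cC$ by the pair $(F,F')=(\min(\cG\cap\cC),\max(\cG\cap\cC))$; $D_s$-freeness then forces $|\cG\cap(F,F')|\le s-1$, and one bounds the Lubell mass a family of at most $s+1$ sets inside the interval $[F,F']$ can contribute, with the range restriction on $s$ entering precisely to make that per-interval bound collapse to the integer $m_s$. That is a genuinely different decomposition from chain-internal pairing, and it is where the real content lives. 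As it stands, your proposal sets up the right normalisation but stops short of the actual proof.
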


\begin{lemma}[Patk\'os \cite{Pat15b}]\label{plem} \

(i) Let $\cG\subseteq 2^{[k]}$ be a family of sets such that any antichain $\cA \subset \cG$ has size at most 3. Then the number of pairs $(G,\cC)$ with $G \in \cG \cap \cC$ and $\cC \in \bC_k$ is at most $4k!$.

(ii) For any constant $c$ with $1/2<c<1$ there exists an integer $s_c$ such that if $s \ge s_c$ and $s\le c\binom{m^*_s}{\lceil m^*_s/2\rceil}$, then the following holds: if $\cG\subseteq 2^{[k]}$ is a family of sets such that any antichain $\cA \subset \cG$ has size less than $s$, then the number of pairs $(G,\cC)$ with $G \in \cG \cap \cC$ and $\cC \in \bC_k$ is at most $m^*_sk!$.
\end{lemma}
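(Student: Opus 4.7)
The plan is to recast both statements as bounds on the Lubell function. Since each $G\in\cG$ belongs to exactly $|G|!(k-|G|)!$ maximal chains in $2^{[k]}$,
\[
\bigl|\{(G,\cC)\colon G\in\cG\cap\cC,\ \cC\in\bC_k\}\bigr| \ =\ \sum_{G\in\cG}|G|!\,(k-|G|)! \ =\ k!\cdot h(\cG),
\]
where $h(\cG):=\sum_{G\in\cG}\binom{k}{|G|}^{-1}$. So (i) becomes $h(\cG)\le 4$ and (ii) becomes $h(\cG)\le m^*_s$.

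For part (i) the per-level LYM estimate already suffices. Every level $\binom{[k]}{j}$ is an antichain, so the hypothesis gives $|\cG\cap\binom{[k]}{j}|\le\min(3,\binom{k}{j})$, hence
\[
h(\cG)\ \le\ \sum_{j=0}^{k}\frac{\min(3,\binom{k}{j})}{\binom{k}{j}},
\]
and an elementary direct check shows the right-hand side is at most $4$ for every $k$ (with equality at $k\in\{3,4\}$, e.g.\ $\cG=2^{[3]}$). For $k\le 4$ one verifies the bound by hand; for $k\ge 5$ only levels $j=0$ and $j=k$ contribute a full unit, and the remaining sum $3\sum_{j=1}^{k-1}\binom{k}{j}^{-1}$ is at most $2$ (in fact it tends to $0$ as $k\to\infty$).

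For part (ii) the per-level bound alone is no longer tight enough for the few values of $k$ near $m^*_s$. My plan has two stages. First, for $k$ large relative to $m^*_s$ the tail $\sum_{j\ge 1}(s-1)\binom{k}{j}^{-1}$ is negligible and the per-level estimate already yields $h(\cG)\le 2+o(1)\le m^*_s$. Second, for the finitely many remaining small $k$ one exploits the cross-level antichain hypothesis via Kruskal--Katona-type shadow estimates: if several consecutive levels simultaneously approach their per-level maximum of $s-1$ sets, the shadows force an antichain of size $\ge s$ across those levels, contradicting the hypothesis. The condition $s\le c\binom{m^*_s}{\lceil m^*_s/2\rceil}$ with $c<1$ supplies the margin needed for the shadow inequality to strictly beat the per-level bound, and $s\ge s_c$ ensures that the relevant shadow asymptotics are sharp enough.

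The main obstacle I anticipate is the cross-level shadow step in (ii): turning the single-number antichain bound into a precise quantitative tightening of the per-level LYM estimate. The per-level bound does almost all the work, and the delicate part is shaving off the final $O(1)$ gap to land exactly on $m^*_s$, which is where the hypotheses $c<1$ and $s\ge s_c$ become essential.
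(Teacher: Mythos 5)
This lemma is quoted in the paper from Patk\'os [Pat15b] without proof, so there is no internal argument to compare against; I can only assess the proposal on its own merits.

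Your reduction to the Lubell function is the right first move, and part~(i) goes through: $h(\cG)\le\sum_j\min(3,\binom{k}{j})/\binom{k}{j}$, and a direct check gives this sum is at most $4$ for every $k$, with equality at $k=3,4$ (and $\cG=2^{[3]}$ shows tightness). That argument is correct and complete.

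Part~(ii) has a genuine gap. Your ``large $k$'' case, where the tail $(s-1)\sum_{j\ge 1}\binom{k}{j}^{-1}$ is negligible and $h(\cG)\le 2+o(1)$, only kicks in when $k$ is large compared to $s$ itself, i.e.\ $k\gg 2^{m^*_s}/\sqrt{m^*_s}$; it does not cover the range $m^*_s\le k\lesssim 2^{m^*_s}$, which is not ``finitely many remaining small $k$'' but grows with $s$. The hard case is $k$ near $m^*_s$: there the per-level bound gives $(k+1)-\sum_{j:\binom{k}{j}\ge s}(1-(s-1)/\binom{k}{j})$, and whether this lands below $m^*_s$ hinges on showing that the $\Theta(\sqrt{k})$ middle levels with $\binom{k}{j}\ge s$ collectively save more than $k+1-m^*_s$; this is where $s\le c\binom{m^*_s}{\lceil m^*_s/2\rceil}$ (giving each such level savings bounded below) and $s\ge s_c$ (making $\sqrt{m^*_s}$ large enough) enter, and the per-level bound plainly fails for smaller $s$ (e.g.\ $s=700$, $k=m^*_s=12$ gives $\approx 12.5>12$). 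You do not carry out this computation. Instead you invoke a Kruskal--Katona ``shadow'' step whose logic is reversed: it is the \emph{absence} of a size-$s$ antichain that forces $\cG_{j-1}$ into the shadow of $\cG_j$, not the other way around, and it is unclear that such shadow containments tighten the Lubell bound in the way you need. As written, part~(ii) is a plan with the decisive step missing, and the missing step is not where you locate it: the delicate verification is the per-level estimate at $k$ near $m^*_s$, not a cross-level shadow argument for ``finitely many'' small $k$.
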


The special case $r=1$ of the next theorem was proved by Carroll and Katona \cite{CarKat08}. 

\begin{thm}[Griggs, Li \cite{GriLi13}]\label{carkat} \

If $\cF\subseteq 2^{[n]}$ is an induced $\vee_{r+1}$-free family, then $|\cF|\le (1+\frac{2r}{n}+O(\frac{r}{n^2}))\binom{n}{\lfloor n/2\rfloor}$ holds.
\end{thm}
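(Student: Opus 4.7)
\medskip
\noindent\textbf{Proof plan.}

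The plan is a double-counting argument based on the following structural observation: if $\cF$ is induced $\vee_{r+1}$-free, then for any $A\in\cF$ and any level $j>|A|$, the number of $F\in\cF\cap\binom{[n]}{j}$ containing $A$ is at most $r$, since such supersets form an antichain at level $j$ and together with $A$ would otherwise produce an induced $\vee_{r+1}$.

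Set $m=\lfloor n/2\rfloor$ and $\cA=\cF\cap\binom{[n]}{m}$, so $|\cA|\le\binom{n}{m}$ trivially. I would separately bound the upper excess $\cF^{>}=\bigcup_{j>m}\cF\cap\binom{[n]}{j}$ and the lower excess $\cF^{<}=\bigcup_{j<m}\cF\cap\binom{[n]}{j}$. For the upper excess, double-count pairs $(A,F)$ with $A\in\cA$, $F\in\cF^{>}\cap\binom{[n]}{j}$, $A\subsetneq F$: from the $A$-side, the structural observation bounds this by $r|\cA|$; from the $F$-side, when $\cA$ is dense in $\binom{[n]}{m}$, each $F$ contributes nearly $\binom{j}{m}$ pairs. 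This yields $|\cF^{>}\cap\binom{[n]}{j}|\lesssim r\binom{n}{m}/\binom{j}{m}$, and summing via the telescoping identity $\sum_{j>m}\binom{j}{m}^{-1}=1/(m-1)$ gives
\[
|\cF^{>}|\le\frac{r\binom{n}{m}}{m-1}=\left(\frac{2r}{n}+O(r/n^2)\right)\binom{n}{m}.
\]

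For the lower excess, the structural observation applied to $A\in\cF^{<}\cap\binom{[n]}{j}$ shows at most $r$ elements of $\cA$ contain $A$; since $A$ has $\binom{n-j}{m-j}$ middle-level supersets in total, this forces $\binom{n-j}{m-j}-r$ of them to lie outside $\cA$, which (when $\cA$ is dense in $\binom{[n]}{m}$) makes $\cF^{<}\cap\binom{[n]}{j}$ empty unless $j$ is very close to $0$; for such small $j$, Lemma~\ref{binom} bounds the contribution. This yields $|\cF^{<}|=O(r/n^2)\binom{n}{m}$, completing the bound $|\cF|=|\cA|+|\cF^{>}|+|\cF^{<}|\le(1+2r/n+O(r/n^2))\binom{n}{m}$.

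The main obstacle is the $F$-side lower bound in the upper excess argument when $\cA$ is substantially smaller than $\binom{[n]}{m}$: many of $F$'s middle-level subsets may then lie outside $\cA$, so the count $d(F)=|\cA\cap\binom{F}{m}|$ can be far below $\binom{j}{m}$. I would address this by a preliminary bootstrap argument showing that any induced $\vee_{r+1}$-free family of near-extremal size must have $|\cA|\ge(1-O(r/n))\binom{n}{m}$, reducing to the near-full case above. Alternatively, one could bound the $F$-side average using a Kruskal--Katona-type shadow inequality, which lower-bounds $\sum_{F\in\cF^{>}_j}d(F)$ in terms of the density $|\cA|/\binom{n}{m}$ rather than requiring $\cA$ to be full.
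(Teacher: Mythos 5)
The paper does not prove this statement---it is cited from Griggs and Li \cite{GriLi13} and used as an external tool---so there is no in-paper proof to compare against; I will assess your proposal on its own merits.

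Your structural observation (the supersets of $A\in\cF$ at a fixed higher level form an antichain, so there are at most $r$ of them) is correct, and so is the telescoping identity $\sum_{j>m}\binom{j}{m}^{-1}=\frac{1}{m-1}$ with $\frac{1}{m-1}=\frac{2}{n}+O(1/n^2)$; the computation does recover the claimed bound when $\cA=\cF\cap\binom{[n]}{m}$ is essentially all of the middle level. But the step $|\cF^{>}\cap\binom{[n]}{j}|\lesssim r\binom{n}{m}/\binom{j}{m}$ is the crux, and it requires every $F$ at level $j$ to have $(1-o(1))\binom{j}{m}$ of its $m$-subsets inside $\cA$. This is exactly the gap you flag, and neither proposed repair closes it. The bootstrap claim is simply false: for odd $n$, the single level $\binom{[n]}{\lceil n/2\rceil}$ is induced $\vee_{r+1}$-free, has size exactly $\binom{n}{\lfloor n/2\rfloor}$, and has $\cA=\emptyset$; one can even add a near-maximal family $\cB\subseteq\binom{[n]}{\lceil n/2\rceil+1}$ with all up-degrees at most $r$ to reach size $(1+\frac{2r}{n}+O(r/n^2))\binom{n}{\lfloor n/2\rfloor}$, still with $\cA=\emptyset$. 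The Kruskal--Katona suggestion also does not give what is needed: shadow inequalities lower-bound the size of the iterated shadow of $\cF^{>}\cap\binom{[n]}{j}$, not the size of its intersection with the specific family $\cA$, and an $\cA$ of any density strictly below $1$ can be disjoint from that shadow, making $\sum_F d(F)=0$.

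The deeper problem is that splitting $\cF$ by raw level sizes is the wrong normalization; the trade-off among $|\cA|$, $|\cF^{>}|$, $|\cF^{<}|$ is precisely what the approach cannot control. The standard technique in this area---and the one used repeatedly elsewhere in this paper, e.g.\ in the proofs of Theorems \ref{induceddegree} and \ref{diamond}---is to bound the Lubell function $\lambda_n(\cF)=\sum_{F\in\cF}1/\binom{n}{|F|}$, the expected number of members of $\cF$ on a uniformly random maximal chain, and then conclude from $|\cF|\le\lambda_n(\cF)\binom{n}{\lfloor n/2\rfloor}$. One partitions maximal chains by their minimal element in $\cF$, reducing the estimate to the localized families $\cF_F=\{F'\setminus F:F\subseteq F'\in\cF\}$, on which the induced $\vee_{r+1}$-free property gives antichain constraints. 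Because the weight $1/\binom{n}{j}$ is the probability that a random chain passes through a given $j$-set, the level trade-off is handled automatically and no density assumption on $\cA$ is needed. I would abandon the levelwise decomposition and work with $\lambda_n$ directly.
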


We remark that we will use the above theorem with $r=\Theta(n)$. 

\begin{thm}\label{induceddegree} \ 

For every $\varepsilon>0$ and $t,r\in \mathbb{Z}^+$, if $n$ is large enough and $\cF\subseteq 2^{[n]}$ is of size at least $(t+\varepsilon)\binom{n}{\floor{n/2}}$, then there exists $F\in \cF$ such that $F$ is the bottom element of an induced copy of $\vee_{\delta n^t}$ with $\delta=\frac{\varepsilon}{2^{t+3}t!}$. 
\end{thm}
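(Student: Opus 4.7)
My plan is to apply Theorem~\ref{kchain} to obtain many $(t+1)$-chains in $\cF$, then locate a bottom element $F^*$ that sits below many of them, and finally extract, from the \emph{saturated} chains from $F^*$, an antichain of size $\delta n^t$ on a single level.

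First, I apply Theorem~\ref{kchain} with $k=t+1$: from $|\cF|\ge(t+\varepsilon)\binom{n}{\lfloor n/2\rfloor}$ we obtain at least $\varepsilon n^t/2^{t+1}\binom{n}{\lfloor n/2\rfloor}$ chains of length $t+1$. Passing, if necessary, to a subfamily of $\cF$ of size exactly $\lceil(t+\varepsilon)\binom{n}{\lfloor n/2\rfloor}\rceil$ only decreases the chain count, so averaging the chains over their bottom element produces some $F^*\in\cF$ that is the bottom of at least $\varepsilon n^t/(2^{t+1}(t+\varepsilon))$ chains of length $t+1$.

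Second, call a chain $F^*=F_0\subsetneq F_1\subsetneq\cdots\subsetneq F_t$ \emph{saturated} if $|F_i|=|F^*|+i$ for every $i$. For any fixed $G\supsetneq F^*$ with $|G|=|F^*|+t$ there are at most $t!$ saturated chains from $F^*$ to $G$, one per ordering of the $t$ elements of $G\setminus F^*$. Hence the set $\mathcal{T}$ of tops of saturated chains from $F^*$ has size at least $1/t!$ times the number of saturated chains starting at $F^*$. All elements of $\mathcal{T}$ live in the single level $\binom{[n]}{|F^*|+t}$, so $\mathcal{T}$ is automatically an antichain, and $\{F^*\}\cup\mathcal{T}$ is the desired induced $\vee_{|\mathcal{T}|}$ rooted at $F^*$.

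The main obstacle is therefore to show that a constant fraction of the chains from $F^*$ supplied by the first step are saturated, so that $|\mathcal{T}|\ge\delta n^t$ with the claimed $\delta=\varepsilon/(2^{t+3}t!)$. The route I have in mind is first to use Lemma~\ref{binom} to thin $\cF$ to the sets whose size is close to $n/2$: discarding sets with $||F|-n/2|$ larger than a suitable threshold loses at most $\tfrac{\varepsilon}{2}\binom{n}{\lfloor n/2\rfloor}$ sets, so the hypothesis still applies to the thinned family (with $\varepsilon$ replaced by $\varepsilon/2$). Within this narrowed family, the double-counting used in the proof of Theorem~\ref{kchain} — summing $\binom{|\cF\cap\pi|}{t+1}$ over maximum chains $\pi$ of $2^{[n]}$ — can be refined to saturated chains, since the contribution of chains with spread $|F_t|-|F_0|>t$ is damped by the falling binomial coefficients once the family is concentrated near the middle. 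Carrying this out yields at least $\delta\cdot t!\cdot n^t$ saturated chains from $F^*$, which, combined with the preceding steps, produces the induced copy of $\vee_{\delta n^t}$ rooted at $F^*$.
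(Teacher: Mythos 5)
Your outline breaks down at the step where you want a constant fraction of the $(t+1)$-chains starting at $F^*$ to be \emph{saturated} (i.e., to hit $t+1$ consecutive levels). That claim is false, and no refinement of the double-counting can rescue it, because there are families satisfying the hypothesis that contain no saturated $(t+1)$-chain whatsoever. Take $\cF$ to be $t+1$ alternating levels,
\[
\cF=\binom{[n]}{\lfloor n/2\rfloor}\cup\binom{[n]}{\lfloor n/2\rfloor-2}\cup\cdots\cup\binom{[n]}{\lfloor n/2\rfloor-2t}.
\]
Then $|\cF|=(t+1+o(1))\binom{n}{\lfloor n/2\rfloor}$, so the hypothesis holds for any $\varepsilon<1$, and $\cF$ has an abundance of $(t+1)$-chains. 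But the nonempty levels of $\cF$ differ by $2$, so $\cF$ contains no pair $F\subsetneq F'$ with $|F'|=|F|+1$; hence no saturated chain of length even $2$, and your set $\mathcal{T}$ of tops of saturated chains is empty for every $F^*$. (The theorem is still true here: any $F$ on the bottom level has $\binom{n-|F|}{2t}=\Theta(n^{2t})$ supersets on the top level, which form the required antichain — they just sit $2t$ levels above $F$, not $t$.) The ``damping by falling binomial coefficients'' you invoke only says that any single $(t+1)$-chain of large spread lies on few maximal chains; it does not force $\cF$'s chains to be tight, and in this example they cannot be. As a smaller issue, even granting that all chains from $F^*$ were saturated, your averaging from Theorem~\ref{kchain} yields only $\varepsilon n^t/\bigl(2^{t+1}(t+\varepsilon)\bigr)$ of them, which falls short of the $\delta\, t!\, n^t=\varepsilon n^t/2^{t+3}$ you need once $t\ge 4$.

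The root of the difficulty is insisting that the antichain live at level exactly $|F^*|+t$; you only need \emph{some} antichain of $\delta n^t$ proper supersets of $F^*$. The paper's proof avoids Theorem~\ref{kchain} entirely and works directly with the Lubell function. After discarding the sets far from level $n/2$ (the same thinning you propose via Lemma~\ref{binom}), one has $\lambda_n(\cF)\ge t+\varepsilon/2$. Partitioning maximal chains by their least element in $\cF$ gives $\lambda_n(\cF)=\sum_{F\in\cF}\frac{|\bC_F|}{n!}\lambda_{n-|F|}(\cF_F)$ with $\cF_F=\{F'\setminus F: F\subseteq F'\in\cF\}$, so some $F$ satisfies $\lambda_{n-|F|}(\cF_F)\ge t+\varepsilon/2$. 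If every level $i\ge t$ of $\cF_F$ had fewer than $\delta n^t$ sets, the levels $i<t$ would contribute at most $t$ to $\lambda_{n-|F|}(\cF_F)$ while the levels $i\ge t$ (which run only up to about $2n^{2/3}$ after thinning) would contribute at most $\delta n^t\sum_{i\ge t}1/\binom{n-|F|}{i}=O(\delta\, t!)<\varepsilon/2$ — a contradiction. Hence some single level $i\ge t$ of $\cF_F$ carries at least $\delta n^t$ sets, and these, being on one level, are pairwise incomparable proper supersets of $F$, giving the induced $\vee_{\delta n^t}$. This is both lighter than invoking Samotij's theorem and, crucially, it lets the antichain sit at whatever level the mass actually is.
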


\begin{proof}
As $|\{G\subseteq [n]: ||G|-n/2|\ge n^{2/3}\}|\le \frac{1}{n^2}\binom{n}{\floor{n/2}}$, we can assume that $n/2-n^{2/3}\le |F|\le n+n^{2/3}$ holds for every $F\in \cF$ and $|\cF|\ge (t+\varepsilon/2)\binom{n}{\floor{n/2}}$. The Lubell-function $\lambda_n(\cF):=\sum_{F\in \cF}\frac{1}{\binom{n}{|F|}}$ of a family $\cF\subseteq 2^{[n]}$ of sets is the average number of sets in $\cF\cap \cC$ over all maximal chains $\cC\in \bC_n$. Clearly, $|\cF|\ge (t+\varepsilon/2)\binom{n}{\floor{n/2}}$ implies $\lambda_n(\cF)\ge t+\varepsilon/2$. 

Let $\bC_F$ be the collection of maximal chains $\cC\in \bC_n$ such that $F$ is the smallest set in $\cF\cap \cC$, and let $\bC_\emptyset$ be the collection of those maximal chains that avoid $\cF$.
Let us partition $\bC_n$ into $\bC_\emptyset~\cup~\bigcup_{F\in \cF}\bC_F$. Writing $\cF_F=\{F'\setminus F: F'\in \cF, F\subseteq F'\}$, we obtain $\lambda_n(\cF)=\sum_{F\in \cF}\frac{|\bC_F|}{n!}\lambda_{n-|F|}(\cF_F)$. This means that for some $F\in \cF$, we must have $\lambda_{n-|F|}(\cF_F)\ge t+\varepsilon/2$. If $\cF$ does not contain any induced copy of $\vee_{\delta n^t}$, then for any $i\ge t$ we have $|\{F'\in \cF_F:|F'|=i\}|<\delta n^t$, thus writing $\cF_{F,1}=\{G\in \cF_F:|G|\le t-1\}$ and $\cF_{F,2}=\cF_F\setminus\cF_{F,1}$ we obtain $$t+\varepsilon/2\le \lambda_{n-|F|}(\cF_F)=\lambda_{n-|F|}(\cF_{F,1})+\lambda_{n-|F|}(\cF_{F,2})\le$$ 
$$ \le  t+\delta n^t\sum_{i=t}^{2n^{2/3}}\frac{1}{\binom{n-|F|}{i}} < t+\frac{2\delta n^t}{\binom{n/2-n^{2/3}}{t}}\le t+2^{t+2}\delta t!.$$
This is a contradiction if $\delta\le\frac{\varepsilon}{2^{t+3}t!}$.
\end{proof}

Hogenson \cite{viking}, altering a proof of Balogh, Mycroft, and Treglown \cite{BalMycTre14},  obtained a container lemma for non-induced $\vee_{r+1}$-free families. With the help of Theorem \ref{induceddegree}, we can validate it for induced $\vee_{r+1}$-free families. 

For a family $\cG$ of sets in $2^{[n]}$ and a set $F \subseteq 2^{[n]}$, we introduce $U_\cG(F)=\{G\in \cG:F\subseteq G\}$. Previous proofs used the $\cG$-\textit{degree} of $F$, the size of $U_\cG(F)$. Observe that $\cF$ is $\vee_{r+1}$-free if and only if the $\cF$-degree of every $F\in \cF$ is at most $r$. We introduce the $\cG$-\textit{weight} of $F$ as the size of the largest antichain in $U_\cG(F)$. Observe that $\cF$ is induced $\vee_{r+1}$-free if and only if the $\cF$-weight of every $F\in \cF$ is at most $r$. Now we state the appropriate container lemma.

\begin{thm}\label{inducedcontainer}
Let $t,r\in \mathbb{Z}^+$ and $\varepsilon \le \frac{1}{(2t)^{t+1}}$ and assume  $n$ is large enough. Then there exist functions $f:\binom{2^{[n]}}{\le (r+1)2^nn^{-(t+0.9)}}\rightarrow \binom{2^{[n]}}{(t+1+\varepsilon)\binom{n}{\floor{n/2}}}$ and $g:\binom{2^{[n]}}{\le (r+1)\frac{t+2}{\varepsilon^2n^t}\binom{n}{\floor{n/2}}} \rightarrow \binom{2^{[n]}}{(t+\varepsilon)\binom{n}{\floor{n/2}}}$ such that for any induced $\vee_{r+1}$-free family $\cF\subseteq 2^{[n]}$ there exist disjoint subfamilies $\cH_1,\cH_2\subseteq \cF$ such that $(\cH_1\cup \cH_2)\cap g(\cH_1\cup \cH_2)=\emptyset$, $\cH_2\subseteq f(\cH_1)$ and $\cF\subseteq \cH_1\cup\cH_2\cup g(\cH_1\cup \cH_2)$.
\end{thm}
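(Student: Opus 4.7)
The plan is to adapt Hogenson's two-round container construction \cite{viking} for the non-induced $\vee_{r+1}$-free case, replacing her degree-based vertex-selection rule by the weight-based rule supplied by Theorem \ref{induceddegree}. The key reformulation is that $\cF$ is induced $\vee_{r+1}$-free iff the $\cF$-weight is at most $r$ for every $F\in\cF$, so any induced $\vee$-structure above a set of $\cF$ has top antichain of size at most $r$. Round 1 produces $\cH_1$ together with an intermediate container $f(\cH_1)$ of size $\le(t+1+\varepsilon)\binom{n}{\floor{n/2}}$; Round 2 refines $f(\cH_1)$ to produce $\cH_2$ and the final container $g(\cH_1\cup\cH_2)$ of size $\le(t+\varepsilon)\binom{n}{\floor{n/2}}$.

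In Round 1 we maintain a current container $\cC\subseteq 2^{[n]}$, initially $\cC=2^{[n]}$, and set $\cH_1=\emptyset$. We iterate: while $|\cF\cap(\cC\setminus\cH_1)|>(t+1+\varepsilon)\binom{n}{\floor{n/2}}$, apply Theorem \ref{induceddegree} with parameter $t+1$ to $\cF\cap(\cC\setminus\cH_1)$; this yields a set $F$ in that family at the bottom of an induced $\vee_{\delta n^{t+1}}$, where $\delta=\varepsilon/(2^{t+4}(t+1)!)$. Select $F$ and a canonical witnessing antichain $\cA$ (say lexicographically), adjoin $\{F\}\cup\cA$ to $\cH_1$ (so each iteration adds at most $r+1$ elements, since $|\cA|\le r$), and delete from $\cC$ a canonical frozen region containing $\cA$ and of size $\Omega(n^{t+1})$, e.g.\ all $G\in\cC$ with $F\subsetneq G$ and $|G\setminus F|\le t+1$. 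Each iteration shrinks $\cC$ by $\Omega(n^{t+1})$ sets, so the loop runs at most $2^n n^{-(t+0.9)}$ times for $n$ large, yielding $|\cH_1|\le(r+1)2^n n^{-(t+0.9)}$. Setting $f(\cH_1):=\cC$ on termination guarantees $\cF\setminus\cH_1\subseteq f(\cH_1)$.

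Round 2 runs the analogous loop inside $\cC':=f(\cH_1)$ with parameter $t$: while $|\cF\cap(\cC'\setminus(\cH_1\cup\cH_2))|>(t+\varepsilon)\binom{n}{\floor{n/2}}$, Theorem \ref{induceddegree} with $\delta'=\varepsilon/(2^{t+3}t!)$ produces $F$ at the bottom of an induced $\vee_{\delta' n^t}$ inside the current $\cF$-section of $\cC'$; add $\{F\}\cup\cA'$ (where $\cA'$ is the canonical witnessing antichain of size at most $r$) to $\cH_2$, and remove a canonical $\Omega(\delta' n^t)$-sized frozen region from $\cC'$. Since $|\cC'|\le(t+1+\varepsilon)\binom{n}{\floor{n/2}}$ initially and each step removes $\Omega(\varepsilon n^t/t!)$ sets, the loop terminates within $(t+2)/(\varepsilon^2 n^t)\binom{n}{\floor{n/2}}$ iterations, the hypothesis $\varepsilon\le 1/(2t)^{t+1}$ absorbing the $t!$-type factors into the claimed $\varepsilon^{-2}$. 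This gives $|\cH_2|\le(r+1)(t+2)\binom{n}{\floor{n/2}}/(\varepsilon^2 n^t)$. Finally set $g(\cH_1\cup\cH_2):=\cC'$; the three required properties $(\cH_1\cup\cH_2)\cap g(\cH_1\cup\cH_2)=\emptyset$, $\cH_2\subseteq f(\cH_1)$ and $\cF\subseteq\cH_1\cup\cH_2\cup g(\cH_1\cup\cH_2)$ hold by construction.

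The principal obstacle is ensuring that $f$ and $g$ are honest functions of the fingerprints alone, not of $\cF$. This is achieved by making every selection (of $F$, of $\cA$, of the frozen region) canonical, and then reconstructing $\cC$ and $\cC'$ from the fingerprints by replaying the algorithm in its intrinsic order --- a standard but careful bookkeeping trick inherited from Balogh--Mycroft--Treglown \cite{BalMycTre14} and Hogenson \cite{viking}. A secondary point is the precise calibration of $\delta,\delta'$ and frozen-region sizes so that the claimed size bounds on $|\cH_1|$ and $|\cH_2|$ hold; the hypothesis $\varepsilon\le 1/(2t)^{t+1}$ is used precisely to handle the $1/\varepsilon^2$ factor in the second bound.
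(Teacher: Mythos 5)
Your high-level skeleton (two phases, a small fingerprint $\cH_1,\cH_2$, canonical tie-breaking to make $f,g$ well defined, replay argument) matches the paper's proof, but the selection mechanism at the heart of each round is wrong in a way that collapses the whole construction.

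You apply Theorem \ref{induceddegree} to $\cF \cap (\cC \setminus \cH_1)$, a subfamily of $\cF$ and hence itself induced $\vee_{r+1}$-free. For $n$ large, $r+1 < \delta n^{t+1}$, so the contrapositive of Theorem \ref{induceddegree} (with $t+1$ in place of $t$) gives $|\cF\cap(\cC\setminus\cH_1)| < (t+1+\varepsilon)\binom{n}{\floor{n/2}}$ always. Your while-loop therefore never executes, $\cH_1=\emptyset$, and $f(\cH_1)=\cC=2^{[n]}$, which is far larger than the required $(t+1+\varepsilon)\binom{n}{\floor{n/2}}$. The same contradiction is already visible inside your text: the witnessing antichain $\cA$ cannot simultaneously have size $\delta n^{t+1}$ (as Theorem \ref{induceddegree} produces) and satisfy $|\cA|\le r$.

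The paper avoids this by selecting, at each step, the element $G_i$ of largest $\cG^{i-1}$-weight in the \emph{container} $\cG^{i-1}\subseteq 2^{[n]}$ (which is not $\vee_{r+1}$-free), not in $\cF$. The antichain $\cA^i$ is the largest antichain in $U_{\cG^{i-1}}(G_i)$, so it is large ($>n^{t+0.9}$ in Phase I), yet only $\cA^i\cap\cF$, of size at most $r$, is placed into $\cH_1$; the rest of $\cA^i$ lies outside $\cF$, so deleting it from the container costs nothing for the containment $\cF\subseteq\cH_1\cup\cH_2\cup g(\cH_1\cup\cH_2)$. Theorem \ref{induceddegree} is invoked only at the end of each phase, applied to the container $\cG^i$, to conclude that once every set has small $\cG^i$-weight, $|\cG^i|$ is small. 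Your ``frozen region'' idea is independently broken on two counts: the antichain from the proof of Theorem \ref{induceddegree} can use supersets $F'$ of $F$ with $|F'\setminus F|$ as large as about $2n^{2/3}$, so $\{G:F\subsetneq G,\ |G\setminus F|\le t+1\}$ need not contain it; and deleting from $\cC$ a region that contains sets of $\cF$ not placed in $\cH_1$ again destroys the final containment.
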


\begin{proof}
The proof uses the standard graph container algorithm with some modifications. First, we fix an ordering $S_1,S_2,\dots,S_{2^n}$ of $2^{[n]}$ and also an ordering $\cS_1,\cS_2,\dots,\cS_{2^{2^n}}$ of $2^{2^{[n]}}$. The input of the algorithm is an induced $\vee_{r+1}$-free family $\cF\subseteq 2^{[n]}$, and in two phases it outputs $\cH_1$, $\cH_2$, $f(\cH_1)$ and $g(\cH_1\cup \cH_2)$ as follows. 

\medskip 

At the beginning we set $\cG^0=2^{[n]}$, $\cH^0_1=\cH^0_2=\emptyset$ and we start at Phase I. 

\smallskip 

Later in the $i$th round (for $i=1,2,\dots$) we always pick a $G_i\in \cG^{i-1}$ with largest $\cG^{i-1}$-weight. If there are several sets $G$ with the same (largest) $\cG^{i-1}$-weight, we pick the one appearing first in our fixed ordering of $2^{[n]}$. 

\bigskip  

\textbf{Phase I.}

\medskip  

$\bullet$ If $G_i\notin \cF$, we set $\cG^{i}=\cG^{i-1}\setminus\{G\}$ and $\cH^{i}_1=\cH^{i-1}_1$, $\cH^{i}_2=\cH^{i-1}_2$.

\smallskip 

$\bullet$ If $G_i\in \cF$, and the $\cG^{i-1}$-weight of $G_i$ is more than $n^{t+0.9}$, then we pick the largest antichain $\cA^{i}$ in $U_{\cG^{i-1}}(G_i)$. If there are multiple such antichains, we pick the one with the smallest index in our fixed ordering of $2^{2^{[n]}}$ and set $\cG^{i}=\cG^{i-1}\setminus (\cA^{i} \cup \{G_i\})$, $\cH^{i}_1=\cH^{i-1}_1\cup [(\cA^{i}\cap \cF)\cup \{G_i\}]$ and $\cH^{i}_2=\cH^{i-1}_2$.

\smallskip 

$\bullet$ If $G_i\in \cF$, and the $\cG^{i-1}$-weight of $G_i$ is at most $n^{t+0.9}$, then Phase I is ended, we keep $\cG^{i}=\cG^{i-1}$, $\cH^{i}_2=\cH^{i-1}_2$, set $\cH_1=\cH^{i-1}_1$ (from here on, $\cH^{i}_1$ does not change) and define $f(\cH_1)=\cG^{i}$.  And jump to Phase II.

\medskip  

\textbf{Phase II.}

\medskip  

$\bullet$ If $G_i\notin \cF$, we set $\cG^{i}=\cG^{i-1}\setminus\{G\}$ and $\cH^{i}_1=\cH^{i-1}_1$, $\cH^{i}_2=\cH^{i-1}_2$.

\smallskip 

$\bullet$ If $G_i\in \cF$, and the $\cG^{i-1}$-weight of $G_i$ is more than $\varepsilon^2n^{t}$, then we again take the largest antichain $\cA^{i}$ in $U_{\cG^{i-1}}(G_i)$ with the smallest index in our fixed ordering of $2^{2^{[n]}}$ and set $\cG^{i}=\cG^{i-1}\setminus (\cA^{i} \cup \{G_i\})$, $\cH^{i}_2=\cH^{i-1}_2\cup [(\cA^{i}\cap \cF)\cup \{G_i\}]$.

\smallskip 

$\bullet$ If $G_i\in \cF$, and the $\cG^{i-1}$-weight of $G_i$ is at most $\varepsilon^2n^{t}$, then Phase II and the algorithm is ended, set $\cH_2=\cH^{i-1}_2$  and define $g(\cH_1\cup \cH_2)=\cG^{i-1}$.

\bigskip 

Observe the following:
\begin{itemize}
    \item 
    whenever we include sets in $\cH^{i}_1$, then the number of such sets is at most $r+1$ (as $\cF$ is \newline $\vee_{r+1}$-free), and the number of sets removed from $\cG^{i-1}$ is at least $n^{t+0.9}$, so $$|\cH_1|\le(r+1)2^n/n^{t+0.9};$$
    \item
    at the end of Phase I, $\cG^{i}$ does not contain any induced copies of $\vee_{n^{t+0.9}}$, so, by Theorem~\ref{induceddegree}, $$|f(\cH_1)|=|\cG^{i}|\le (t+1+\varepsilon)\binom{n}{\floor{n/2}};$$
    \item
    the above two bullet points and the threshold for Phase II imply that $$|\cH_2|\le \frac{r+1}{\varepsilon^2n^t}|f(\cH_1)|\le  (r+1)\frac{t+2}{\varepsilon^2n^t}\binom{n}{\floor{n/2}};$$
    \item
    Theorem \ref{induceddegree} implies that at the end of Phase II,  $$|g(\cH_1\cup \cH_2)|=|\cG^{i}|\le (t+\varepsilon)\binom{n}{\floor{n/2}}.$$

\end{itemize}

All what remains to prove is that the function $f$ and $g$ are well defined, i.e., if for two distinct $\vee_{r+1}$-free families $\cF$ and $\cF'$ the algorithm outputs the same $\cH_1$, then $f(\cH_1)$ is defined the same, and if in addition $\cH_1\cup \cH_2$ is the same for both run of the algorithm, then so is $g(\cH_1\cup \cH_2)$. We claim more: the families $\cG^{i},\cH^{i}_1,\cH^{i}_2$ are the same for both runs for all values of $i=0,1,\dots$. This is certainly true for $i=0$. Then by induction, if this holds for some $i$, then $G^{i}$ is the same for both run. Therefore, due to the fixed ordering of $2^{[n]}$, the algorithm considers the same set $G_{i+1}\in \cG^{i}$ in step $i+1$. As $G_{i+1}$ will be removed from $\cG^{i}$ in all cases, therefore it cannot happen that $G_{i+1}$ belongs to exactly one of $\cF,\cF'$. If $G_{i+1}\notin \cF,\cF'$, then $G_{i+1}$ is removed from $\cG^{i}$and nothing happens to $\cH_1,\cH_2$ so the claim is true for $i+1$. If $G_{i+1}\in \cF\cap \cF'$, then, due to the fixed ordering of $2^{2^{[n]}}$, the antichain $\cA^{i+1}$ is defined the same for both runs. This immediately yields that $\cG^{i+1}$ is defined the same for both runs. Also, as for any $A\in \cA^{i+1}$ either $A$ becomes a set of $\cH_1,\cH_2$ in this step or never, and in the end these sets are the same, therefore they must be the same after step $i+1$.
\end{proof}

\section{Proofs}

\subsection{$\vee_{r+1}$-free families and consequences - Theorem \ref{s1tsuper}, \ref{s1tcount}, and \ref{easy}}

In this subsection we present the proofs of our theorems concerning $\vee_{r+1}$-free and $K_{s,1,t}$-free families. We restate the theorems here for convenience.

\begin{thmn}[\ref{s1tsuper}]
For any $s,t\in \mathbb{N}$ and $\varepsilon>0$ there exist $n_0=n_{\varepsilon,s,t}$ and $\delta>0$ such that any $\cF\subseteq 2^{[n]}$ of size at least $(2+\varepsilon)\binom{n}{\lfloor n/2\rfloor}$ with $n\ge n_0$ contains at least $\delta n^{s+t}\binom{n}{\lfloor n/2\rfloor}$ induced copies of $K_{s,1,t}$.
\end{thmn}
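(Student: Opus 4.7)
The plan is to identify a sub-family $\cF'\subseteq\cF$ of size $\Omega(\binom{n}{\floor{n/2}})$ each of whose members can serve as the unique middle element of $\Omega(n^{s+t})$ induced copies of $K_{s,1,t}$. For each $F\in\cF$, let $\alpha(F)$ denote the largest size of an antichain in $\cF$ consisting of sets strictly containing $F$, and let $\beta(F)$ denote the analogous quantity for sets strictly contained in $F$. Fix a small constant $c=c(\varepsilon)>0$ (to be chosen as $c=\varepsilon/10$) and set
\[
\cF_1=\{F\in\cF:\alpha(F)<cn\},\qquad \cF_2=\{F\in\cF:\beta(F)<cn\}.
\]

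The key observation is that $\cF_1$ is induced $\vee_{\lceil cn\rceil}$-free: any antichain in $\cF_1\subseteq\cF$ lying strictly above some $F\in\cF_1$ is in particular an antichain in $\cF$ above $F$, so has size less than $cn$. Theorem \ref{carkat}, applied with $r=\lceil cn\rceil-1=\Theta(n)$ --- precisely the regime explicitly flagged in the remark following that theorem --- then yields
\[
|\cF_1|\le\left(1+2c+O(1/n)\right)\binom{n}{\floor{n/2}}.
\]
Taking complements $F\mapsto[n]\setminus F$ turns $\cF_2$ into an induced $\vee_{\lceil cn\rceil}$-free family of the same cardinality, so the same bound applies to $|\cF_2|$. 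With $c=\varepsilon/10$ and $n$ sufficiently large we obtain $|\cF_1|+|\cF_2|\le(2+\varepsilon/2)\binom{n}{\floor{n/2}}$, whence $\cF':=\cF\setminus(\cF_1\cup\cF_2)$ has size at least $(\varepsilon/2)\binom{n}{\floor{n/2}}$.

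For each $M\in\cF'$, fix antichains $\cA_M^+\subseteq\cF$ of size $\lceil cn\rceil$ strictly above $M$ and $\cA_M^-\subseteq\cF$ of size $\lceil cn\rceil$ strictly below $M$. Any selection of $t$ sets from $\cA_M^+$ and $s$ sets from $\cA_M^-$, together with $M$, forms an induced copy of $K_{s,1,t}$: incomparability within each side comes from the antichain property, while bottom-to-top strict containment follows by transitivity through $M$. This produces at least $\binom{cn}{s}\binom{cn}{t}\ge C\, n^{s+t}$ induced copies per choice of $M$, for some constant $C=C(c,s,t)>0$. Since in any induced copy of $K_{s,1,t}$ the middle element is the unique element of the copy that is simultaneously above some and below some other element, distinct choices of $M$ produce disjoint sets of copies. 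Multiplying the two estimates gives at least $\delta\,n^{s+t}\binom{n}{\floor{n/2}}$ induced copies, for a suitable $\delta=\delta(\varepsilon,s,t)>0$.

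The main potential obstacle is verifying that Theorem \ref{carkat} remains effective in the linear regime $r=\Theta(n)$, but this is precisely the regime the authors explicitly flag. Beyond that, the argument is a clean double counting after the right decomposition of $\cF$, and no further difficulty is expected; the constants can be tracked by hand without issue.
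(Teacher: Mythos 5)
Your proof is correct and follows essentially the same route as the paper: you decompose $\cF$ by removing the sets with few supersets/subsets forming an antichain (your $\cF_1,\cF_2$ are exactly the paper's $\cU,\cD$ with the same threshold $\varepsilon n/10$), bound each piece by the Griggs--Li theorem (Theorem \ref{carkat}), and count copies of $K_{s,1,t}$ through the remaining middle elements. The only cosmetic differences are that you explicitly pass through complementation for $\cF_2$ and track the error term and disjointness a bit more carefully, neither of which changes the argument.
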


\begin{proof}
Let $\cF\subseteq 2^{[n]}$ be a family of sets of size $(2+\varepsilon)\binom{n}{\floor{n/2}}$.  
Let $\cD$ be the family of those elements of $\cF$ that are not the maximal element of an induced $\wedge_{\varepsilon n/10}$ and $\cU$ the family of those that are not the minimal element of an induced $\vee_{\varepsilon n/10}$.
By Theorem \ref{carkat}, we have $|\cD|,|\cU|\le (1+4\varepsilon /10)\binom{n}{\floor{n/2}}$, thus $|\cF\setminus (\cD\cup \cU)|\ge \frac{\varepsilon}{5}\binom{n}{\floor{n/2}}$. Taking sets from $\cF\setminus (\cD\cup\cU)$ to play the role of the middle element of $K_{s,1,t}$, by definition of $\cD$ and $\cU$, we obtain at least $\frac{\varepsilon}{5}\binom{n}{\floor{n/2}}\binom{\varepsilon n/10}{s}\binom{\varepsilon n/10}{t}$ copies of $K_{s,1,t}$.
\end{proof}

\begin{thmn}[\ref{s1tcount}] \ 

(i) The number of induced $\vee_r$-free families is $2^{(1+o(1))\binom{n}{\floor{n/2}}}$.

(ii) The number of induced $K_{s,1,t}$-free families in $2^{[n]}$ is $2^{(2+o(1))\binom{n}{\floor{n/2}}}$.
\end{thmn}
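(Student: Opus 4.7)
The plan is to establish (i) using the induced container theorem (Theorem \ref{inducedcontainer}) and then derive (ii) from (i) via a canonical two-part decomposition of any induced $K_{s,1,t}$-free family. The lower bounds are immediate: for (i), every subfamily of the middle level $\binom{[n]}{\lfloor n/2\rfloor}$ is induced $\vee_r$-free (being an antichain), giving $2^{\binom{n}{\lfloor n/2\rfloor}}$ families; for (ii), the union of the two middle levels has size $(2-o(1))\binom{n}{\lfloor n/2\rfloor}$ and is induced $K_{s,1,t}$-free since it has height $2$ while $K_{s,1,t}$ has height $3$, yielding $2^{(2-o(1))\binom{n}{\lfloor n/2\rfloor}}$ subfamilies.

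For the upper bound in (i), I would fix $\varepsilon>0$ small and apply Theorem \ref{inducedcontainer} with $t=1$ and with the theorem's parameter $r$ replaced by $r-1$, so that it applies to induced $\vee_r$-free families. Every such $\cF$ lies inside a container $\cH_1\cup\cH_2\cup g(\cH_1\cup\cH_2)$ of total size at most $(1+2\varepsilon)\binom{n}{\lfloor n/2\rfloor}$, since $|\cH_1|=O(2^n/n^{1.9})=o(\binom{n}{\lfloor n/2\rfloor})$, $|\cH_2|=O(\binom{n}{\lfloor n/2\rfloor}/n)$, and $|g(\cH_1\cup\cH_2)|\le(1+\varepsilon)\binom{n}{\lfloor n/2\rfloor}$. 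Using the standard estimate $\log_2\binom{N}{\le K}\le K\log_2(eN/K)$ together with $\binom{n}{\lfloor n/2\rfloor}=\Theta(2^n/\sqrt{n})$, the number of admissible pairs $(\cH_1,\cH_2)$ is $2^{o(\binom{n}{\lfloor n/2\rfloor})}$: the $O(2^n/n^{1.9})$ budget for $\cH_1$ is comfortably below $2^n/\sqrt n$, and the budget of $O(\binom{n}{\lfloor n/2\rfloor}/n)$ for $\cH_2$ inside a set of size $O(\binom{n}{\lfloor n/2\rfloor})$ contributes only a factor of order $\log n$ per element in the exponent. Multiplying the number of containers by $2^{(1+2\varepsilon)\binom{n}{\lfloor n/2\rfloor}}$ subfamilies per container and letting $\varepsilon\to 0$ yields $2^{(1+o(1))\binom{n}{\lfloor n/2\rfloor}}$.

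For the upper bound in (ii), given any induced $K_{s,1,t}$-free family $\cF$, I would set
\[
\cF^*=\{F\in\cF: \cF\cap\{G:G\subsetneq F\}\text{ contains no antichain of size }s\}.
\]
Then $\cF^*$ is induced $\wedge_s$-free in itself, as any induced $\wedge_s$ inside $\cF^*$ directly contradicts the defining property of its maximum element; and $\cF\setminus\cF^*$ is induced $\vee_t$-free, because otherwise the minimum $F_0$ of an induced $\vee_t$ inside $\cF\setminus\cF^*$ supplies $t$ pairwise incomparable sets strictly above $F_0$ in $\cF$, while $F_0\notin\cF^*$ supplies $s$ pairwise incomparable sets strictly below $F_0$ in $\cF$, yielding an induced $K_{s,1,t}$ in $\cF$ by transitivity. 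Since $\cF\mapsto(\cF^*,\cF\setminus\cF^*)$ is injective, the number of induced $K_{s,1,t}$-free families is at most the product of the numbers of induced $\wedge_s$-free and induced $\vee_t$-free families; applying (i) to $\vee_t$-free and its complementation dual $\cF\mapsto\{[n]\setminus F:F\in\cF\}$ to $\wedge_s$-free, both factors equal $2^{(1+o(1))\binom{n}{\lfloor n/2\rfloor}}$, so the product is $2^{(2+o(1))\binom{n}{\lfloor n/2\rfloor}}$.

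The main obstacle will be the careful accounting in part (i) --- verifying that the number of container fingerprints $(\cH_1,\cH_2)$ is subexponential in $\binom{n}{\lfloor n/2\rfloor}$, which reduces to checking that the thresholds $O(2^n/n^{1.9})$ and $O(\binom{n}{\lfloor n/2\rfloor}/n)$ in Theorem \ref{inducedcontainer} are small enough relative to $\binom{n}{\lfloor n/2\rfloor}=\Theta(2^n/\sqrt{n})$. Once (i) is established, part (ii) follows essentially for free from the decomposition above.
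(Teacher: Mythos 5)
Your proposal is correct and follows essentially the same route as the paper: part (i) via the induced container lemma (Theorem \ref{inducedcontainer} with $t=1$) plus the count that the number of fingerprints $(\cH_1,\cH_2)$ is $2^{o(\binom{n}{\lfloor n/2\rfloor})}$, and part (ii) by splitting $\cF$ into a $\wedge_s$-free part and a $\vee_t$-free part and applying (i) to each. The only cosmetic difference is in (ii): the paper uses the possibly overlapping cover $\cF=\cD\cup\cU$ (sets not maximal in any induced $\wedge_s$, resp.\ not minimal in any induced $\vee_t$), while you use the genuine partition $\cF^*\sqcup(\cF\setminus\cF^*)$; both give an injection into pairs and the same $2^{(2+o(1))\binom{n}{\lfloor n/2\rfloor}}$ bound.
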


\begin{proof}
To prove (i), we apply our container lemma, Theorem \ref{inducedcontainer} with $t=1$. It shows that for every induced $\vee_{r+1}$-free family $\cF$, there exist $\cH_1,\cH_2$ and $g(\cH_1\cup \cH_2)$ such that $\cH_1$ and $\cH_2$ are disjoint, $|\cH_1\cup \cH_2|\le (r+1)\frac{3}{\varepsilon^2n}\binom{n}{\floor{n/2}}$, $|g(\cH_1\cup \cH_2)|\le (1+\varepsilon)\binom{n}{\floor{n/2}}$ and $\cF\subseteq \cH_1\cup \cH_2\cup g(\cH_1\cup \cH_2)$. Therefore, $|\cH_1\cup\cH_2\cup g(\cH_1\cup \cH_2)|\le (1+2\varepsilon)\binom{n}{\floor{n/2}}$. The number of subfamilies of such containers is at most $2^{(1+2\varepsilon)\binom{n}{\floor{n/2}}}$, and the number of such containers is at most $$\binom{2^n}{(r+1)\frac{3}{\varepsilon^2n}\binom{n}{\floor{n/2}}}2^{(r+1)\frac{3}{\varepsilon^2n}\binom{n}{\floor{n/2}}}.$$ Indeed, the first term is an obvious upper bound on the number of families $\cH_1\cup \cH_2$, and the second term is an obvious upper bound on the number of ways to partition $\cH_1\cup \cH_2$ to $\cH_1$ and $\cH_2$.
Using $\binom{n}{\floor{n/2}}=\Theta(\frac{1}{\sqrt{n}}2^n)$ and $\binom{a}{b}\le (\frac{ea}{b})^b$, we obtain that the number of induced $\vee_{r+1}$-free families in $2^{[n]}$ is at most
$$\binom{2^n}{(r+1)\frac{3}{\varepsilon^2n}\binom{n}{\floor{n/2}}}2^{(r+1)\frac{3}{\varepsilon^2n}\binom{n}{\floor{n/2}}}2^{(1+2\varepsilon)\binom{n}{\floor{n/2}}}=2^{(1+2\varepsilon+O_{\varepsilon,r}(\frac{\log n}{n}))\binom{n}{\floor{n/2}}}.$$
The lower bound follows from the fact that every subfamily of the middle level is $\vee_r$-free.

To prove (ii), observe that every induced $K_{s,1,t}$-free family $\cF\subseteq 2^{[n]}$ can be written as $\cF=\cD\cup \cU$ such that $\cD$ is induced $\wedge_s$-free and $\cU$ is induced $\vee_t$-free. Indeed, 
let $\cD$ be the family of those elements of $\cF$ that are not the maximal element of an induced $\wedge_{s}$ and $\cU$ be the family of those that are not the minimal element of an induced $\vee_{t}$.
If $F\in \cF$ does not belong to $\cD\cup \cU$, then $F$ with its $s$ subsets and its $t$ supersets form an induced $K_{s,1,t}$, which contradicts the $K_{s,1,t}$-free property of $\cF$. By part (i), there are $2^{(1+o(1))\binom{n}{\lfloor n/2\rfloor}}$ induced $\vee_t$-free families in $2^{[n]}$ and there are $2^{(1+o(1))\binom{n}{\lfloor n/2\rfloor}}$ induced $\wedge_s$-free families in $2^{[n]}$. Therefore there are at most $2^{(2+o(1))\binom{n}{\lfloor n/2\rfloor}}$ induced $K_{s,1,t}$-free families in $2^{[n]}$. The lower bound immediately follows from the fact that every subfamily of the middle two levels is $K_{s,1,t}$-free.
\end{proof}

\begin{thmn}[\ref{easy}]
If $p=\omega(1/n)$, then the following are true.

\noindent    (i) For any integer $r\ge 0$, the largest induced $\vee_{r+1}$-free family in $\cP(n,p)$ has size $(1+o(1))p\binom{n}{\lfloor n/2\rfloor}$ w.h.p.

\noindent (ii)    For any pair $s,t\ge 1$ of integers, the largest induced $K_{s,1,t}$-free family in $\cP(n,p)$ has size $(2+o(1))p\binom{n}{\lfloor n/2\rfloor}$ w.h.p. 
\end{thmn}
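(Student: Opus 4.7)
The plan is to prove part (i) via the container method, applying the induced $\vee_{r+1}$-free container lemma (Theorem~\ref{inducedcontainer}), and to deduce part (ii) from part (i) by the same structural decomposition already used in the proof of Theorem~\ref{s1tcount}(ii).

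For part (i), we fix $\varepsilon > 0$ small and apply Theorem~\ref{inducedcontainer} with $t=1$. This assigns to every induced $\vee_{r+1}$-free family $\cF \subseteq 2^{[n]}$ a disjoint pair of ``fingerprints'' $\cH_1,\cH_2 \subseteq \cF$ of total size at most $M_0 := (r+1)\frac{3}{\varepsilon^2 n}\binom{n}{\lfloor n/2\rfloor}$, together with a container $C(\cH_1,\cH_2)= \cH_1\cup\cH_2\cup g(\cH_1\cup\cH_2)$ containing $\cF$ and of size at most $(1+2\varepsilon)\binom{n}{\lfloor n/2\rfloor}$. If $\cP(n,p)$ contained an induced $\vee_{r+1}$-free family of size at least $(1+3\varepsilon)p\binom{n}{\lfloor n/2\rfloor}$, then there would exist an admissible fingerprint $(\cH_1,\cH_2)$ with $\cH_1\cup\cH_2 \subseteq \cP(n,p)$ and $|\cP(n,p)\cap C(\cH_1,\cH_2)| \ge (1+3\varepsilon)p\binom{n}{\lfloor n/2\rfloor}$. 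The first event occurs with probability $p^{|\cH_1\cup\cH_2|}$, and the second, conditional on the first, is controlled by Chernoff with a factor $\exp(-c_\varepsilon p\binom{n}{\lfloor n/2\rfloor})$. Summing over all admissible fingerprints, grouped by $k=|\cH_1\cup\cH_2|$, yields a total bad probability of order
\begin{equation*}
\exp\!\bigl(-c_\varepsilon p\binom{n}{\lfloor n/2\rfloor}\bigr)\sum_{k \le M_0}\binom{2^n}{k}2^k p^k,
\end{equation*}
which we claim is $o(1)$ whenever $p=\omega(1/n)$. The matching lower bound comes from intersecting the middle level with $\cP(n,p)$ and applying Chernoff.

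For part (ii), we imitate the proof of Theorem~\ref{s1tcount}(ii). Given an induced $K_{s,1,t}$-free family $\cF\subseteq \cP(n,p)$, let $\cD$ be the subfamily of those sets that are not the top of any induced $\wedge_s$ in $\cF$, and let $\cU$ be the subfamily of those that are not the bottom of any induced $\vee_t$ in $\cF$. By construction, $\cD$ is induced $\wedge_s$-free and $\cU$ is induced $\vee_t$-free; moreover, every $F\in\cF$ lies in $\cD\cup\cU$, for otherwise $F$ together with its witnessing $s$ subsets and $t$ supersets would form an induced $K_{s,1,t}$ in $\cF$. Applying part (i) (with $r+1=t$) to $\cU$ and its dual (with $r+1=s$) to $\cD$, both $|\cD|$ and $|\cU|$ are at most $(1+o(1))p\binom{n}{\lfloor n/2\rfloor}$ w.h.p., whence $|\cF|\le (2+o(1))p\binom{n}{\lfloor n/2\rfloor}$ w.h.p. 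The matching lower bound comes from the two middle levels of $\cP(n,p)$.

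The main obstacle is the routine but delicate verification that the displayed weighted sum in part (i) is $o(1)$ under the sole hypothesis $p=\omega(1/n)$: the probabilistic weight $p^k$ partially cancels the combinatorial count $\binom{2^n}{k}$, and one must optimize over $k$ and use the precise size bound $M_0=O(\binom{n}{\lfloor n/2\rfloor}/n)$ together with the Chernoff exponent of order $p\binom{n}{\lfloor n/2\rfloor}$. This is the same point at which the proofs of Balogh--Mycroft--Treglown and Hogenson succeed at the threshold $\omega(1/n)$, and Theorem~\ref{inducedcontainer} was designed precisely so as to make their strategy go through in our induced setting.
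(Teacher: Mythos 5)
Your overall strategy is the same as the paper's: use Theorem~\ref{inducedcontainer} to get fingerprint–container pairs, control the intersection of $\cP(n,p)$ with each container by Chernoff, union-bound over fingerprints, and deduce (ii) from (i) by the $\cD\cup\cU$ decomposition. Part (ii) is entirely correct and matches the paper's argument.

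There is, however, a genuine gap in part (i). You replace the structured count of fingerprints by the single sum
\[
\exp\!\bigl(-c_\varepsilon p\tbinom{n}{\lfloor n/2\rfloor}\bigr)\sum_{k \le M_0}\binom{2^n}{k}2^k p^k,\qquad M_0=\Theta\!\Bigl(\tbinom{n}{\lfloor n/2\rfloor}/n\Bigr),
\]
and claim it is $o(1)$ whenever $p=\omega(1/n)$. It is not. Since $\binom{2^n}{k}(2p)^k$ is increasing on $k\le M_0$ once $p=\omega(1/n)$, the sum is dominated by the $k=M_0$ term, which is at most $\bigl(2e\,2^n p/M_0\bigr)^{M_0}=\exp\bigl(M_0\ln(\Theta(n^{3/2}p))\bigr)$, using $2^n/M_0=\Theta(n^{3/2})$. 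Comparing with the Chernoff factor, you need $M_0\ln(n^{3/2}p)\ll p\binom{n}{\lfloor n/2\rfloor}$, i.e.\ $\ln(n^{3/2}p)\ll np$, i.e.\ $\tfrac12\ln n+\ln(np)\ll np$. For $p$ just barely $\omega(1/n)$ (say $np\sim\sqrt{\ln n}$) the left side is $\Theta(\ln n)$ while the right is $o(\ln n)$, so the bound fails. Your estimate only closes for $p=\Omega(\ln n/n)$, a $\log$-factor short of the theorem.

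The point you are missing is that Theorem~\ref{inducedcontainer} is deliberately two-phase, and the paper uses this asymmetry: $\cH_1$ may be an arbitrary subset of $2^{[n]}$ but has size only $O(2^n/n^{1.9})$, while $\cH_2$ may be as large as $\Theta(\binom{n}{\lfloor n/2\rfloor}/n)$ but must be a subset of $f(\cH_1)$, a family of size $O(\binom{n}{\lfloor n/2\rfloor})$. Counting $\cH_2$ by $\binom{O(\binom{n}{\lfloor n/2\rfloor})}{|\cH_2|}$ rather than $\binom{2^n}{|\cH_2|}$ replaces the base $\Theta(n^{3/2}p)$ above by $\Theta(np)$, and then the required inequality $\ln(np)\ll np$ holds automatically once $np\to\infty$. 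Merging $\cH_1$ and $\cH_2$ into a single fingerprint of size $\le M_0$ drawn freely from $2^{[n]}$ throws away exactly the structure that buys the last $\log n$ factor; you must sum over $\cH_1$ and $\cH_2$ separately as the paper does.
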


\begin{proof} To prove (i), we again apply Theorem \ref{inducedcontainer} with $t=1$. All calculations are very close to those in \cite{viking}, which in turn are almost the same as those in \cite{BalMycTre14}, we include them for sake of completeness.  

It is enough to prove the statement for $\varepsilon <\frac{1}{4}$ and set $\varepsilon_1=\eps/4$. We will show that w.h.p. for every $\vee_{r+1}$-free family $\cF$ in $2^{[n]}$ of size at least $(1+\varepsilon)p\binom{n}{\floor{n/2}}$, not all sets of $\cF$ remain in $\cP(n,p)$. For every such $\cF$, Theorem \ref{inducedcontainer} with $\eps_1$ in the role of $\eps$, gives us $\cH_1=\cH_1(\cF),\cH_2=\cH_2(\cF)$ such that 
\begin{itemize}
\item
$\cH_1\in \binom{2^{[n]}}{\le (r+1)2^nn^{-1.9}}$; therefore the number of possible $\cH_1$'s is at most $$\sum_{a \le (r+1)2^nn^{-1.9}}\binom{2^n}{a}.$$ Clearly, we have $\mathbb{P}(\cH_1\subseteq \cP(n,p))=p^{|\cH_1|}$.
\item
$\cH_2\in \binom{2^{[n]}}{\le 3(r+1)\binom{n}{\floor{n/2}}/(\varepsilon_1^2n)}$ and $\cH_2\subseteq f(\cH_1) \in \binom{2^{[n]}}{(2+\varepsilon_1)\binom{n}{\floor{n/2}}}$, so for fixed $\cH_1$ the number of possible $\cH_2$'s is at most 
$$\left|\binom{f(\cH_1)}{\le 3(r+1)\binom{n}{\floor{n/2}}/(\varepsilon_1^2n)}\right|\le \sum_{b\le 3(r+1)\binom{n}{\floor{n/2}}/(\varepsilon_1^2n)}\binom{3\binom{n}{\floor{n/2}}}{b}.$$ Also,  $\mathbb{P}(\cH_2\subseteq \cP(n,p))=p^{|\cH_2|}$.
\item
For fixed $\cH_1$ and $\cH_2$ the corresponding $\cF$'s are all subfamilies of $\cH_1\cup \cH_2 \cup g(\cH_1\cup \cH_2)$ and contain $\cH_1\cup \cH_2$. 

\smallskip 

1. Let  $\cE_{\cH_1,\cH_2}$ be the event that there exists \textit{any} $\cF$ with $\cH_1(\cF)=\cH_1$, $\cH_2(\cF)=\cH_2$, and  $|\cF|\ge (1+\varepsilon)p\binom{n}{\floor{n/2}}$, and 

2. let $\cE_{g(\cH_1\cup \cH_2)}$ be the event that $|\cP(n,p)\cap g(\cH_1\cup \cH_2)|\ge (1+\varepsilon)p\binom{n}{\floor{n/2}}-|\cH_1\cup \cH_2|$ holds.

\smallskip 

We bound the probability of the event $\cE_{\cH_1,\cH_2}$ by the probability of the event $\cE_{g(\cH_1\cup \cH_2)}$. Note that 
$$(1+\varepsilon)p\binom{n}{\floor{n/2}}-|\cH_1\cup \cH_2|\ge (1+\varepsilon/2)p\binom{n}{\floor{n/2}}$$ and $$|g(\cH_1\cup \cH_2)|\le (1+\eps_1)\binom{n}{\floor{n/2}}\le (1+\eps/4)\binom{n}{\floor{n/2}}.$$ 
Therefore, $|\cP(n,p)\cap g(\cH_1\cup \cH_2)|$ is binomially distributed with $$\mathbb{E}(|\cP(n,p)\cap g(\cH_1\cup \cH_2)|)\le (1+\eps/4)p\binom{n}{\floor{n/2}},$$ so by Chernoff's inequality we have $$\mathbb{P}(\cE_{g(\cH_1,\cH_2)})\le\mathbb{P}\left[|\cP(n,p)\cap g(\cH_1\cup \cH_2)|\ge (1+\varepsilon/2)p\binom{n}{\floor{n/2}}\right]\le e^{-\eps^2p\binom{n}{\floor{n/2}}/100}.$$
\end{itemize}

Note that $\cH_1$, $\cH_2$, and $g(\cH_1\cup \cH_2)$ are disjoint, so the three events that $\cH_1\subseteq \cP(n,p)$, $\cH_2\subseteq \cP(n,p)$ and $\cE_{g(\cH_1\cup \cH_2)}$ are independent. Hence the probability that for fixed $\cH_1$ and $\cH_2$ there is a corresponding large induced $\vee_{r+1}$-free family $\cF$, is at most $p^{|\cH_1|+|\cH_2|}e^{-\eps^2p\binom{n}{\floor{n/2}}/100}$. Summing up for all possible $\cH_1$ and $\cH_2$, we obtain that the probability $\Pi$, that there is an induced $\vee_{r+1}$-free family in $\cP(n,p)$ of size $(1+\varepsilon)p\binom{n}{\floor{n/2}}$, is at most
\[\sum_{0\le a \le (r+1)n^{-1.9}2^n}\sum_{0 \le b \le 3(r+1)\binom{n}{\floor{n/2}}/(\varepsilon_1^2n)}\binom{2^n}{a}p^a\binom{3\binom{n}{\floor{n/2}}}{b}p^be^{-\eps^2p\binom{n}{\floor{n/2}}/100}.
\]
It is not hard to verify  that the largest summand in the above sum belongs to the largest possible values of $a$ and $b$. Therefore the above expression is bounded from above by
\[
  (r+1)n^{-1.9}2^n\frac{3(r+1)\binom{n}{\floor{n/2}}}{\varepsilon_1^2n}\binom{2^n}{(r+1)n^{-1.9}2^n}\binom{3\binom{n}{\floor{n/2}}}{\frac{3(r+1)\binom{n}{\floor{n/2}}}{(\varepsilon_1^2n)}}\
  \cdot e^{-\eps^2p\binom{n}{\floor{n/2}}/100}p^{{(r+1)}n^{-1.9}2^n}p^{3(r+1)\binom{n}{\floor{n/2}}/(\varepsilon^2n)}.
\]

Observe that $$(r+1)n^{-1.9}2^n3(r+1)\binom{n}{\floor{n/2}}/(\varepsilon_1^2n)\le e^{O(n)}\le e^{\eps^2p\binom{n}{\floor{n/2}}/400}.$$ 
Also, using $\binom{n}{k}\le (\frac{en}{k})^k$ and $p\binom{n}{\floor{n/2}}=\omega(n^{-1.5}2^n)$, we have
\[
\binom{2^n}{(r+1)n^{-1.9}2^n}p^{{(r+1)}n^{-1.9}2^n} \le (en^{1.9}p)^{(r+1)n^{-1.9}2^n}
 \le e^{{O(n^{-1.9}2^n\ln n)}}\le e^{\eps^2p\binom{n}{\floor{n/2}}/400}.
\]

Finally, by the same reasoning we have
\begin{equation*}
\begin{split}
\binom{3\binom{n}{\floor{n/2}}}{3(r+1)\binom{n}{\floor{n/2}}/(\varepsilon_1^2n)}p^{3(r+1)\binom{n}{\floor{n/2}}/(\varepsilon_1^2n)} & \le (e\varepsilon_1^2np)^{3(r+1)\binom{n}{\floor{n/2}}/(\varepsilon_1^2n)}\\
& \le e^{3(r+1)\binom{n}{\floor{n/2}}p\frac{\ln (np)}{(\varepsilon^2np)}}\le e^{\eps^2p\binom{n}{\floor{n/2}}/400}.
\end{split}
\end{equation*}
Therefore, the probability $\Pi$ is at most $e^{-\eps^2p\binom{n}{\floor{n/2}}/400}=o(1)$, as required.

\medskip

To prove (ii), let $\cF\subseteq \cP(n,p)$ be an induced $K_{s,1,t}$-free family. As in the proof of Theorem \ref{s1tcount} (ii), let us consider the partition $\cF=\cD\cup \cU$ with $\cD=\{F\in \cF: \not\exists F_1,F_2,\dots, F_s$ an antichain with $F_i\subsetneq F \}$ and $\cU=\{F\in \cF: \not\exists F_1,F_2,\dots, F_t$ an antichain with $F_i\supsetneq F \}$. By part (i) of this theorem, both $\cD$ and $\cU$ are of size at most $(1+o(1))p\binom{n}{\lfloor n/2\rfloor}$ and thus $|\cF|\le  (2+o(1))p\binom{n}{\lfloor n/2\rfloor}$.
\end{proof}

\subsection{Results on trees - Theorem \ref{height2} and \ref{monotone}}

\begin{lemma}\label{tree}
Let $\oa{T}$ be a directed tree with $t$ edges that does not contain directed paths of length 2. Then there exists a $\delta>0$ such that the following holds: if $f(m)$ tends to infinity with $m$ and $\oa{G}_m$ is a directed graph on $m$ vertices with $f(m)\cdot m$ edges, then $\oa{G}$ contains $\delta f(m)^tm$ copies of $\oa{T}$. 
\end{lemma}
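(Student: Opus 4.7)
I would prove the lemma by induction on $t$. The base case $t=1$ is immediate: $\vec T$ is a single arc, so the number of copies in $\vec G$ equals the number of arcs, namely $f(m)\,m$; take $\delta = 1$. For the inductive step, note that since $\vec T$ has no directed path of length $2$, every vertex of $\vec T$ is either a source or a sink, so $\vec T$ is a bipartite directed tree with every arc oriented from the source part to the sink part. Pick any leaf $v$ of $\vec T$ with unique neighbor $u$. By reversing all arcs of $\vec G$ if necessary (which preserves the arc count), we may assume the arc is $u \to v$, so $v$ is a sink. Set $\vec T' := \vec T - v$, a bipartite directed tree in the same class with $t-1$ edges.

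Writing $E(\vec T', u, y)$ for the number of copies of $\vec T'$ in $\vec G$ that map $u$ to $y$, every copy of $\vec T$ is obtained from a copy of $\vec T'$ together with a choice of $\phi(v) \in N^+_{\vec G}(y) \setminus \phi(V(\vec T'))$, giving
\[
\#\mathrm{copies}(\vec T, \vec G) \;\ge\; \sum_y E(\vec T', u, y)\bigl(d^+(y)-t\bigr)_+ \;\ge\; \sum_y E(\vec T', u, y)\,d^+(y) \;-\; t\,\#\mathrm{copies}(\vec T', \vec G).
\]
The subtracted term is at most $t\delta'\,m f(m)^{t-1}$ by the basic inductive bound and is absorbed since $f(m) \to \infty$, so it suffices to show $\sum_y E(\vec T', u, y)\,d^+(y) \ge \delta_1\,m f(m)^t$. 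Since $\sum_y E(\vec T', u, y) = \#\mathrm{copies}(\vec T', \vec G) \ge \delta' m f(m)^{t-1}$ by the basic induction, this reduces to the strengthened claim that the probability measure $\nu(y) \propto E(\vec T', u, y)$ on $V(\vec G)$ satisfies $\mathbb{E}_\nu[d^+(y)] \ge f(m)$; in words, random copies of $\vec T'$ place the distinguished source $u$ at vertices of above-average out-degree.

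The strengthened claim is proved by a nested induction on $|V(\vec T')|$. The base case is $\vec T' = \{u\}$, where $\nu$ is uniform and $\mathbb{E}_\nu[d^+] = f(m)$ by definition. For the induction step, peel a leaf $v'$ from $\vec T'$ and set $\vec T'' := \vec T' - v'$. When $v'$ is adjacent to $u$ (so $v'$ is a sink-child of the source $u$), one has $E(\vec T', u, y) \approx E(\vec T'', u, y)\,d^+(y)$ up to lower-order corrections from distinctness, so $\nu_{\vec T'}$ is essentially the $d^+$-biased reweighting of $\nu_{\vec T''}$; then the Cauchy-Schwarz inequality $\mathbb{E}_{\nu_{\vec T''}}[d^{+\,2}]\ge \mathbb{E}_{\nu_{\vec T''}}[d^+]^2$ yields $\mathbb{E}_{\nu_{\vec T'}}[d^+] \ge \mathbb{E}_{\nu_{\vec T''}}[d^+]\ge f(m)$ by the inner induction. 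When $v'$ is attached to some other vertex $u' \neq u$, the added weight $d^\tau(\phi(u'))$ must be transported back to $y = \phi(u)$ along the unique $u$-to-$u'$ path in $\vec T''$, which I would carry out by iterating a Cauchy-Schwarz / Chebyshev sum step at each arc of that path. The main obstacle is this second case: establishing positive correlation between $d^+(\phi(u))$ and $d^\pm(\phi(u'))$ under the copy-induced joint distribution on $(\phi(u),\phi(u'))$, uniformly across all bipartite trees in the class and all $\vec G$, is the most delicate technical point, and would be handled inductively by exploiting the fact that $E(\vec T'', u, y)$ is coordinate-wise monotone in the relevant neighborhoods of $y$.
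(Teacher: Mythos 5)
Your approach is genuinely different from the paper's, which takes a max cut of $\oa{G}_m$ to obtain a one-way bipartite digraph $\oa{H}_0$ with $\tfrac{1}{4}f(m)m$ arcs, iteratively discards vertices of degree below $\tfrac{f(m)}{8t}$ to produce nested subgraphs $\oa{H}_{t-1}\subseteq\cdots\subseteq\oa{H}_0$, and then embeds $\oa{T}$ greedily edge by edge along a tree-ordering, placing the $i$th edge in $\oa{H}_{t-i}$. That scheme needs no degree-correlation statement at all.

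Unfortunately your reduction has a genuine gap, and the ``strengthened claim'' at its core -- that $\nu(y)\propto E(\oa{T}',u,y)$ always satisfies $\mathbb{E}_\nu[d^+]\ge f(m)$ -- is false. Take $\oa{T}'=u\to w\leftarrow z$, the smallest Case~2 instance, and let $\oa{G}_m$ be the disjoint union of a ``sink-star'' (vertices $a_1,\dots,a_{m/2}$, each with a single out-arc to a common vertex $b$) and a $k$-in- and $k$-out-regular digraph $C$ on the remaining $m/2-1$ vertices, with $k=\lfloor m^{1/4}\rfloor$. Then $f(m)=\Theta(k)\to\infty$. But $E(\oa{T}',u,a_i)=m/2-1$ while $E(\oa{T}',u,c)=\Theta(k^2)$ for $c\in C$, so the total $\nu$-mass on $\{a_i\}$ is $\Theta(m^2)$ and that on $C$ is only $\Theta(mk^2)=o(m^2)$; since $d^+(a_i)=1$, this gives $\mathbb{E}_\nu[d^+]\to 1\ll f(m)$. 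The lemma is of course still true for this $\oa{G}_m$ -- all copies of $\oa{T}$ live inside $C$ -- which shows that passing to the strengthened claim discards exactly the information you need. A related slip is the sentence ``the subtracted term is at most $t\delta' mf(m)^{t-1}$ by the basic inductive bound'': the induction gives only a \emph{lower} bound on $\#\mathrm{copies}(\oa{T}')$, and in the example above $\#\mathrm{copies}(\oa{T}')=\Theta(m^2)\gg mf(m)^{t-1}$, so the quantity $\sum_y E(\oa{T}',u,y)d^+(y)-t\cdot\#\mathrm{copies}(\oa{T}')$ is actually negative and the chain of inequalities collapses. Finally, Case~2 of the nested induction is explicitly left open as a ``delicate technical point''; the counterexample shows it cannot be completed in the form proposed, so the argument would need to be restructured along lines closer to the paper's local pruning.
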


\begin{proof}
It is a well-known fact that every undirected graph contains a cut (i.e., a partition of its vertices into two and the edges between the parts) that contains at least half of its edges. This fact easily implies that the vertex set of $\oa{G}_m$ can be partitioned into $A\cup B$ such that there exist $\frac{f(m)m}{4}$ edges pointing from $A$ to $B$. Let $\oa{H}_0$ denote this directed subgraph of $\oa{G}_m$. For $i=1,2,\dots,t-1$ we remove the  set $U_i\subseteq V(\oa{H}_{i-1})$ of vertices that are adjacent to at most $\frac{f(m)}{8t}$ edges in $\oa{H}_{i-1}$. We also remove the set of adjacent edges to obtain $\oa{H}_i$. As for each $i$ we remove at most $\frac{f(m)m}{8t}$ edges, $\oa{H}_{t-1}$ contains at least $\frac{f(m)m}{8}$ edges. Also, by definition, if $\oa{uv}$ is an edge in $\oa{H}_i$, then both $u$ and $v$ are adjacent to at least $\frac{f(m)}{8t}$ edges in $\oa{H}_{i-1}$. Based on these properties, we can embed $\oa{T}$ to $\oa{G}_m$ greedily as follows: we fix an ordering $e_1,e_2,\dots, e_t$ of the edges of $\oa{T}$ such that $e_1,\dots,e_j$ is a tree for every $j=1,2,\dots,t$ and then embed $e_i$ to an edge of $\oa{H}_{t-i}$. There are at least $\frac{f(m)m}{8}$ choices for the image of $e_1$ and at least $\frac{f(m)}{8t}-i$ choices for the image of $e_i$ if $i\ge 2$. 
\end{proof}

\begin{thmn}[\ref{height2}]
Let $T$ be any height 2 tree poset of $t+1$ elements. Then for any $\varepsilon>0$ there exist $\delta>0$ and $n_0$ such that for any $n\ge n_0$ any family $\cF\subseteq 2^{[n]}$ of size $|\cF|\ge(1+\varepsilon)\binom{n}{\lfloor n/2\rfloor}$  contains at least $\delta n^t\binom{n}{\lfloor n/2\rfloor}$ copies of $T$.
\end{thmn}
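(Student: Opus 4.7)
The plan is to combine the Kleitman-type chain supersaturation in Theorem~\ref{kchain} with the tree-embedding Lemma~\ref{tree}. To make the density parameter in the lemma come out at the right order, I would first trim $\cF$ to a subfamily $\cF_0\subseteq \cF$ of size exactly $m:=\lceil (1+\varepsilon/2)\binom{n}{\lfloor n/2\rfloor}\rceil$; this keeps $m=\Theta(\binom{n}{\lfloor n/2\rfloor})$, which is the regime in which the lemma is sharp. Applying Theorem~\ref{kchain} with $k=2$, parameter value $1$ (its ``$t$'') and $\varepsilon/2$ in place of $\varepsilon$, I obtain at least $\tfrac{\varepsilon n}{8}\binom{n}{\lfloor n/2\rfloor}$ pairs $F\subsetneq F'$ inside $\cF_0$.

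Next I would build a directed graph $\oa{G}$ on vertex set $\cF_0$, placing an arc from $F$ to $F'$ whenever $F\subsetneq F'$. By the previous step $\oa{G}$ has $m$ vertices and at least $f(m)\cdot m$ arcs, where $f(m)=\Omega(\varepsilon n)$, in particular $f(m)\to \infty$ as $n\to \infty$. Now let $\oa{T}$ be the Hasse diagram of $T$ oriented from smaller to larger elements; since $T$ is a height~2 tree poset on $t+1$ vertices, $\oa{T}$ is a directed tree with $t$ edges, and crucially it contains no directed path of length $2$, so the hypotheses of Lemma~\ref{tree} are met. The lemma then produces at least $\delta' f(m)^t m=\Omega_{\varepsilon}(n^t\binom{n}{\lfloor n/2\rfloor})$ injective copies of $\oa{T}$ inside $\oa{G}$, which is exactly the target bound after relabelling the constant as $\delta$.

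It remains to observe that each such copy of $\oa{T}$ in $\oa{G}$ really is a non-induced copy of $T$ inside $\cF$: the map $i\colon V(T)\to \cF_0$ is injective (so the chosen sets are distinct), and because $T$ has height~2, every relation $p\le q$ in $T$ with $p\ne q$ is already an edge of its Hasse diagram, hence is carried by the embedding to an arc $i(p)\to i(q)$ of $\oa{G}$, i.e.\ to a genuine $\subsetneq$-relation in $\cF_0\subseteq \cF$. I do not foresee a serious obstacle: the only delicate point is the first step, namely choosing a subfamily whose cardinality is a constant multiple of $\binom{n}{\lfloor n/2\rfloor}$, since otherwise $f(m)=|E(\oa G)|/m$ could be forced smaller and the product $f(m)^t m$ would fall below the desired $n^t\binom{n}{\lfloor n/2\rfloor}$ threshold.
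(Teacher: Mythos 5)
Your proposal is correct and follows essentially the same route as the paper: take the directed comparability graph, extract many arcs via Theorem~\ref{kchain} (the $k=2$, i.e.\ Kleitman, case), observe the Hasse diagram of a height-$2$ tree poset is a directed tree with $t$ edges and no directed path of length $2$, and apply Lemma~\ref{tree}. The one small place where you are more careful than the paper's two-line proof is the trimming of $\cF$ down to a subfamily of size $\Theta\bigl(\binom{n}{\lfloor n/2\rfloor}\bigr)$ before forming the graph; the paper simply plugs in $m=\binom{n}{\lfloor n/2\rfloor}$ and $f(m)=\varepsilon n/4$, implicitly relying on this reduction, and your explicit version closes that small gap.
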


\begin{proof}
For a family $\cF\subseteq 2^{[n]}$ of size at least $(1+\varepsilon)\binom{n}{\floor{n/2}}$, let us consider its directed comparability graph $\oa{G}_\cF$. This is the graph with vertex set $\cF$ where $\oa{FG}$ is an edge if and only if $F\subsetneq G$. By Theorem \ref{kchain}, $\oa{G}_\cF$ contains at least $\varepsilon\frac{n}{4}\binom{n}{\floor{n/2}}$ edges and thus Lemma \ref{tree} can be applied with $m=\binom{n}{\floor{n/2}}$ and $f(m)=\varepsilon\frac{n}{4}$. 
\end{proof}

\begin{proposition}\label{monx}
For any monotone tree poset $T$, we have
$x(T)=|T|-1+\sum(h(T)-r(\ell))$, where $h(T)$ denotes the height of $T$, the summation is over all leaves $\ell$ of $T$, and $r(\ell)$ is the rank of $\ell$, i.e., its distance from the root in the Hasse diagram of $T$ plus 1.
\end{proposition}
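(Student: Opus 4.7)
The plan is to reduce the problem to a small linear-programming question on the Hasse tree. By complementing sets in $2^{[n]}$, I may assume $T$ is upward-monotone; then $T$ has a unique maximum, which I call the root $r$, and its Hasse diagram is a tree rooted at $r$ in which every non-root $v$ has a unique parent $p(v)$. Fix a copy $\cG=\{S_v:v\in T\}$ of $T$ in the $e(T)+1=h(T)$ middle levels. Every $S_v$ is contained in $R:=S_r$, so $B_\cG=R$ and setting $X_v:=R\setminus S_v$ gives
\[
M_\cG-m_\cG=\Bigl|\bigcup_{v\in T}X_v\Bigr|.
\]
Define $Y_v:=X_v\setminus X_{p(v)}$ for non-root $v$; since $S_v\subsetneq S_{p(v)}$, each $|Y_v|\ge 1$, and along every root-to-leaf path the sets $X_r\subsetneq X_{v_1}\subsetneq\cdots\subsetneq X_{\ell}$ form a strictly increasing chain, which makes the $Y$'s on that path pairwise disjoint. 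The middle-levels bound $|X_\ell|\le h(T)-1$ then translates into
\[
\sum_{v\text{ on root-to-}\ell\text{ path},\, v\ne r}|Y_v|=|X_\ell|\le h(T)-1\qquad\text{for every leaf }\ell.
\]

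\textbf{For the upper bound} on $x(T)$, I would note that $\bigcup_{v\in T}X_v=\bigcup_{v\ne r}Y_v$, so $M_\cG-m_\cG\le\sum_{v\ne r}|Y_v|$. Writing $|Y_v|=1+\alpha_v$ with $\alpha_v\ge 0$, the displayed constraint becomes $\sum_{v\in\text{path},\, v\ne r}\alpha_v\le h(T)-r(\ell)$. Summing over all leaves $\ell$ and letting $L(v)\ge 1$ denote the number of leaves in the subtree rooted at $v$,
\[
\sum_{v\ne r}\alpha_v\le \sum_{v\ne r}L(v)\,\alpha_v=\sum_\ell\sum_{v\in\text{path},\, v\ne r}\alpha_v\le \sum_\ell(h(T)-r(\ell)),
\]
which yields $M_\cG-m_\cG\le (|T|-1)+\sum_\ell(h(T)-r(\ell))$.

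\textbf{For the matching lower bound} I would assign $w_v:=1$ to every non-root non-leaf $v$ and $w_\ell:=1+h(T)-r(\ell)$ to every leaf $\ell$, so that along every root-to-leaf path the weights sum to exactly $h(T)-1$. Taking pairwise disjoint subsets $Y_v\subseteq[n]$ of size $w_v$ inside a ground set $R\subseteq[n]$ whose cardinality is chosen to be the top of the $h(T)$ central sizes, and putting $S_v:=R\setminus\bigcup_{u\text{ on path from }r\text{ to }v,\, u\ne r}Y_u$, produces a copy of $T$ in the middle $h(T)$ levels with $\bigl|\bigcup_vX_v\bigr|=\sum_{v\ne r}w_v=(|T|-1)+\sum_\ell(h(T)-r(\ell))$, saturating the bound. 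The only delicate point is the tightness of the double-counting step; this works cleanly because the ``extra'' weight $\alpha_v$ can be loaded entirely on the leaves, each of which lies on a unique root-to-leaf path, so the leaf constraints decouple and simultaneously guide the weight choice in the construction.
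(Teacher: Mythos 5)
Your proof is correct, and it takes a genuinely different route from the paper's. The paper's argument for Proposition~\ref{monx} goes through asymptotic counting: it parametrises embeddings of $T$ into the $h(T)$ middle levels by the level function $f:T\to[h(T)]$, observes that the number of embeddings with a given $f$ is $\Theta\bigl(n^{\sum_{p\neq\text{root}}(f(p)-f(p'))}\binom{n}{\lfloor n/2\rfloor}\bigr)$, and then finds the maximising $f$ by a local exchange argument (push leaves to the top level, set $f(p)=f(p')+1$ for non-leaves); the conclusion that this exponent equals $x(T)$ then implicitly uses the earlier $M(n,P)=\Theta(n^{x(P)}\binom{n}{\lfloor n/2\rfloor})$ comparison. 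You instead work directly with the defining quantity $M_\cG-m_\cG$ for a single copy $\cG$: you decompose $R\setminus\bigcap_v S_v$ into the disjoint-along-paths increments $Y_v$, read the middle-level constraint as $\sum_{v\in\text{path to }\ell}|Y_v|\le h(T)-1$, and close the upper bound with a weighted double-count over root-to-leaf paths using $L(v)\ge 1$, then realise it by an explicit construction with $|Y_v|=1$ for internal vertices and the slack loaded onto leaves. This is a self-contained combinatorial optimisation on the tree and avoids any appeal to asymptotics or to the earlier $\Theta$-estimate, which is arguably cleaner; the paper's version has the advantage of reusing the same embedding-counting machinery that appears elsewhere. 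One small point worth fixing: your ``upward-monotone'' (unique maximum, parents above) is the paper's \emph{downward}-monotone; since $x(T)=x(T^{\mathrm{op}})$ under complementation and both $h(T)$ and the leaf ranks $r(\ell)$ are invariant under reversal, the WLOG reduction is harmless, but the labels should match the paper's definitions.
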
 

\begin{proof}
Let us generate an embedding $i$ of $T$ into the $h(T)$ middle levels. Assume without loss of generality that $T$ is upward monotone, thus the root must be mapped to the lowest of the middle levels. We will refer to this level as the first middle level, and analogously the level above it is the second middle level, and the $k$th middle level is above the first level by $k-1$.

Observe that there are $(1+o(1))\binom{n}{\floor{n/2}}$ ways to pick the image of the root. Then we define $i$ by going through the elements of $T$ in a non-decreasing order with respect to the rank $r(p)$. That means that for every element $p$, when we decide where to embed it, we have already embedded its \emph{predecessor}, the (unique) element $p'$ that is smaller than $p$ such that there is no element $p''$ with $p'<p''<p$.

If the predecessor $p'$ of $p$ is mapped to the $j$th middle level and we want to map $p$ to the $k$th middle level, then the number of possibilities is $\Theta(n^{k-j})$. Let $f_i:T\rightarrow [h(T)]$ be a function defined by $f(p)=k$ if $i(p)$ is on the $k$th middle level. For a given $f:T\rightarrow [h(T)]$, the number of embeddings $i$ with $i=f_i$ is $\Theta(n^{\sum_{p\in T}(f(p)-f(p'))})$, where $p'$ is the predecessor of $p$ and the summation goes over all elements of $P$ apart from the root. Therefore, to obtain $x(T)$, we need to maximize $\Theta(n^{\sum_{p\in P}(f(p)-f(p'))})$. Clearly, when embedding a leaf $\ell$, we must have $f(\ell)=h(T)$. Finally, observe that if for some non-leaf $p$, we have $f(p)>f(p')+1$, then changing $f(p)$ to $f(p')+1$ cannot decrease the sum (and strictly increases it, if $p$ has at least two children). This shows that a function $f$ that maximizes the sum must satisfy $f(\ell)=h(T)$ for all leaves, and $f(p)=r(p)$ for all non-leaves.
\end{proof}

\begin{lemma}\label{embedding}
Assume that the families $2^{[n]}\supset \cF_1\supset \cF_2\supset \dots \supset \cF_h$ satisfy the following properties for some $\delta_1, \delta_2, \delta_3 >0$.

\begin{enumerate}[i)]
\item $|\cF_h|\ge \delta_1 \binom{n}{\lfloor{n/2\rfloor}}$.

\item For every $i=2,3,\dots,h$ and $F \in \cF_i$ there exist at least $\delta_2 n$ sets $F'\in \cF_{i-1}$ with $F\subsetneq F'$.

\item For every $i=2,3,\dots,h$ and $F\in \cF_i$ there exist at least $\delta_3 n^{i-1}$ sets $F'\in \cF_1$ with $F\subsetneq F'$.
\end{enumerate}

Then for any upward monotone tree poset $T$ of height $h(T)=h$, $\cF_1$ contains at least $\delta n^{x(T)}\binom{n}{\lfloor n/2\rfloor}$ copies of $T$, where $\delta>0$ is a constant depending on $\delta_1, \delta_2, \delta_3$ and $|T|$.
\end{lemma}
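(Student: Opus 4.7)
The plan is to construct many embeddings $i \colon T \to \cF_1$ greedily, exploiting the perfect match between the chain $\cF_h \subsetneq \cF_{h-1} \subsetneq \dots \subsetneq \cF_1$ and the rank structure of an upward monotone tree of height $h$. I would assign to each non-leaf element $p \in T$ of rank $r$ the target family $\cF_{h-r+1}$ (so the root goes into $\cF_h$), and to every leaf the target family $\cF_1$. Because $T$ is upward monotone, every non-root element has a unique predecessor. Processing elements of $T$ in non-decreasing order of rank, I would place each one in its target family as a proper superset of its predecessor's image, and distinct from all previously chosen images.

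The three hypotheses yield the following choice counts. The root contributes $\Omega(\binom{n}{\lfloor n/2 \rfloor})$ options by (i). For each non-root, non-leaf $p$ of rank $r$, the predecessor has been embedded in $\cF_{h-r+2}$; property (ii) then gives at least $\delta_2 n$ candidate supersets in $\cF_{h-r+1}$, leaving $\Omega(n)$ valid options after excluding the at most $|T|$ previously used sets. For each leaf $\ell$ of rank $r(\ell)$, the predecessor's image lies in $\cF_{h-r(\ell)+2}$, so property (iii) supplies at least $\delta_3 n^{h-r(\ell)+1}$ supersets in $\cF_1$, again leaving $\Omega(n^{h-r(\ell)+1})$ valid options.

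Multiplying these factors, the number of embeddings would be $\Omega(n^E \binom{n}{\lfloor n/2 \rfloor})$ with exponent
\[
E \;=\; (|T|-1-L) + \sum_{\ell}\bigl(h(T)-r(\ell)+1\bigr) \;=\; |T| - 1 + \sum_{\ell}\bigl(h(T)-r(\ell)\bigr),
\]
where $L$ is the number of leaves. By Proposition \ref{monx} this exponent equals $x(T)$. Each copy of $T$ in $\cF_1$ is counted at most $|\Aut(T)|=O(1)$ times, hence the number of copies is at least $\delta \cdot n^{x(T)} \binom{n}{\lfloor n/2 \rfloor}$ for some $\delta>0$ depending only on $\delta_1,\delta_2,\delta_3$ and $|T|$. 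I expect the main subtlety to be mere bookkeeping: identifying the correct target index $h-r+1$ so that (ii) and (iii) apply with the right shift, and checking that subtracting the constant number of already-used sets at each step does not affect the asymptotic order of the candidate pool.
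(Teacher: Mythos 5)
Your proof is correct and follows essentially the same greedy embedding strategy as the paper: embed the root into $\cF_h$, descend rank-by-rank placing non-leaf elements via hypothesis (ii) and leaves via hypothesis (iii), and invoke Proposition \ref{monx} to identify the resulting exponent with $x(T)$. One small inaccuracy: you claim each copy of $T$ in $\cF_1$ is counted at most $|\Aut(T)|$ times, but since these are non-induced copies the image set may carry extra comparabilities, so two of your embeddings onto the same image need not differ by an automorphism of $T$; the correct (and still constant) bound is $|T|!$, which is what the paper uses. This does not affect the result.
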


\begin{proof}
We generate embeddings of $T$ as  follows: we embed elements of $T$ according to their rank. The root of $T$ can be embedded to any set $F\in \cF_h$. By i), we have at least $\delta_1 \binom{n}{\lfloor{n/2\rfloor}}$ choices.

Any non-leaf element $x\in T$ with its predecessor embedded to $F_i \in \cF_i$ can be embedded to any set $F'\in \cF_{i-1}$ with $F_i \subsetneq F'$ that has not yet been used by the embedding. By ii), we have at least $\delta_2 n -|T|$ choices. If $x$ is of rank $r(x)$ then this process will embed it to $\cF_{h+1-r(x)}$.

If $\ell$ is a leaf vertex of rank $r(\ell)$ in $T$, then its predecessor is embedded into some $F\in\cF_{h+2-r(\ell)}$. We can embed $\ell$ to any superset of $F$ that has not yet been used. By iii), there exist at least $\delta_3 n^{h+1-r(\ell)}-|T|$ such sets.

Proposition \ref{monx} yields that the exponent of $n$ in the number of embeddings generated this way will be exactly $x(T)$. (We get a factor of $n$ for all vertices except for the root, and an additional $n^{h-r(\ell)}$ for leaves $\ell$.) A copy corresponds to at most $|T|!$ embeddings.
\end{proof}

\begin{thmn}[\ref{monotone}]
For any monotone tree poset $T$ and $\varepsilon>0$, there exist $\delta>0$ and $n_0$ such that for any $n\ge n_0$ any family $\cF\subseteq 2^{[n]}$ of size $|\cF|\ge (h(T)-1+\varepsilon)\binom{n}{\floor{n/2}}$ contains at least $\delta n^{x(T)}\binom{n}{\floor{n/2}}$ copies of $T$.
\end{thmn}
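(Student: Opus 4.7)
Without loss of generality we assume $T$ is upward monotone, since the downward case follows by complementation: replacing $\cF$ by $\{[n]\setminus F:F\in\cF\}$ swaps upward and downward monotone trees while preserving all counts. The proof then reduces to constructing nested families $\cF_1=\cF\supset\cF_2\supset\cdots\supset\cF_h$ satisfying the three hypotheses of Lemma \ref{embedding}, after which that lemma immediately gives the desired $\delta n^{x(T)}\binom{n}{\lfloor n/2\rfloor}$ copies of $T$.

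The natural definition is to let $\cF_i$ consist of those $F\in\cF$ that are the bottom of an induced $\vee_{\alpha_i n^{i-1}}$ in $\cF$, for a decreasing sequence $\alpha_h<\alpha_{h-1}<\cdots<\alpha_2$ of positive constants. Since $\alpha_i n^{i-1}\ge \alpha_{i-1} n^{i-2}$ holds once $n$ is large enough, we get $\cF_i\subseteq\cF_{i-1}$, and property (iii) is immediate with $\delta_3=\min_i\alpha_i$. Property (i), that $|\cF_h|\ge\delta_1\binom{n}{\lfloor n/2\rfloor}$, follows by iterating Theorem \ref{induceddegree} with $t=h-1$: because $|\cF|\ge(h-1+\varepsilon)\binom{n}{\lfloor n/2\rfloor}$, we may repeatedly extract a new set of the required type so long as the remaining family has size at least $(h-1+\varepsilon/2)\binom{n}{\lfloor n/2\rfloor}$, producing at least $\frac{\varepsilon}{2}\binom{n}{\lfloor n/2\rfloor}$ members of $\cF_h$.

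The main obstacle is property (ii): showing that every $F\in\cF_i$ has at least $\delta_2 n$ supersets in $\cF_{i-1}$. Since $F$ has $\alpha_i n^{i-1}$ antichain supersets in $\cF$, it suffices to show that at most $\alpha_i n^{i-1}-\delta_2 n$ of them fail to lie in $\cF_{i-1}$, i.e., fail to themselves be the bottom of an induced $\vee_{\alpha_{i-1}n^{i-2}}$ in $\cF$. For $i=2,3$ this is a direct application of Theorem \ref{carkat} with $r=\alpha_{i-1}n^{i-2}-1=\Theta(n)$, whose hypothesis on the size of the ``bad'' subfamily of $\cF$ (the induced $\vee_{\alpha_{i-1}n^{i-2}}$-free sets) gives a bound of $(1+O(\alpha_{i-1}))\binom{n}{\lfloor n/2\rfloor}$. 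For $i\ge 4$ the required value of $r$ grows faster than $n$, so Theorem \ref{carkat} is no longer directly applicable; instead we use an iterated version of Theorem \ref{induceddegree} to bound the number of $F'\in\cF$ with strictly fewer than $\alpha_{i-1}n^{i-2}$ antichain supersets. Choosing $\alpha_{i-1}\ll\alpha_i/n$ at each step then keeps the bad size well below the total $\alpha_i n^{i-1}$ of antichain supersets of $F$, so that a positive fraction of them remain in $\cF_{i-1}$.

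Once the three properties are established with constants $\delta_1,\delta_2,\delta_3>0$ independent of $n$, Lemma \ref{embedding} delivers at least $\delta n^{x(T)}\binom{n}{\lfloor n/2\rfloor}$ copies of $T$ in $\cF$, completing the proof. The crux of the argument is the simultaneous calibration of the constants $\alpha_h<\cdots<\alpha_2$ to guarantee both sufficient density of $\cF_h$ (for property (i)) and adequate ``superset richness'' transferring from one level to the next (for property (ii))---an accounting that is delicate but elementary given Theorems \ref{carkat} and \ref{induceddegree}.
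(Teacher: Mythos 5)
Your overall framework (reduce to Lemma~\ref{embedding} via a nested chain $\cF_1\supset\cdots\supset\cF_h$) matches the paper, but your choice of $\cF_i$ and your argument for property~(ii) do not work. You define $\cF_i$ as those $F$ that are the bottom of a $\vee_{\alpha_i n^{i-1}}$ \emph{in $\cF$}, and then try to deduce that $F\in\cF_i$ has many supersets \emph{in $\cF_{i-1}$} by bounding the total size of the ``bad'' family $\cB=\cF\setminus\cF_{i-1}$ via Theorem~\ref{carkat}. This is a global bound on $|\cB|$; it says nothing about how many of the $\alpha_i n^{i-1}$ particular supersets of a given $F$ lie in $\cB$, and indeed they can all lie there. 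Your concluding sentence --- that ``choosing $\alpha_{i-1}\ll\alpha_i/n$ keeps the bad size well below $\alpha_i n^{i-1}$'' --- conflates two incomparable quantities: $|\cB|$ is of order $\binom{n}{\lfloor n/2\rfloor}$ while $\alpha_i n^{i-1}$ is merely polynomial, and moreover $\alpha_{i-1}\ll\alpha_i/n$ is impossible for two positive constants.

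The flaw is not merely expository: your families $\cF_i$ genuinely fail property~(ii). Take $h=3$, $T=P_3$, and $\cF=\binom{[n]}{\lfloor n/2\rfloor-1}\cup\binom{[n]}{\lfloor n/2\rfloor+3}\cup\cB$, where $\cB\subseteq\binom{[n]}{\lfloor n/2\rfloor-2}$ has size $\varepsilon\binom{n}{\lfloor n/2\rfloor}$; then $|\cF|\sim(2+\varepsilon)\binom{n}{\lfloor n/2\rfloor}$. Every $F$ at level $\lfloor n/2\rfloor-1$ has $\Theta(n^4)$ antichain supersets (all at level $\lfloor n/2\rfloor+3$), hence lies in $\cF_3$ for any constant $\alpha_3$, yet none of those supersets has a single superset in $\cF$, so $\cF_2$ avoids level $\lfloor n/2\rfloor+3$ entirely and $F$ has \emph{zero} supersets in $\cF_2$. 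No choice of constants $\alpha_i$ repairs this. The paper avoids the problem by defining $\cF_i$ \emph{inductively in terms of $\cF_{i-1}$}: a set stays in $\cF_i$ precisely when the expected number of $\cF_{i-1}$-sets on a random chain from $F$ to $[n]$ is at least $1+\varepsilon'$, so that ``many supersets in $\cF_{i-1}$'' (properties~(ii) and~(iii)) is essentially built into the definition, while property~(i) is then obtained by a chain-partition/Lubell-function argument showing $|\cF_{i-1}\setminus\cF_i|\le(1+\varepsilon')\binom{n}{\lfloor n/2\rfloor}$. That inductive, local definition is the key idea your proposal is missing.
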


\begin{proof}
Throughout the proof we can assume that $n$ is sufficiently large. Let $h:=h(T)$. To prove the theorem, we will find families $\cF\supseteq \cF_1\supset \cF_2\supset \dots \supset \cF_h$ that satisfy the conditions of Lemma \ref{embedding} for sufficiently small $(\delta_1, \delta_2, \delta_3)$ depending on $\varepsilon$ and $T$. Let $\cF_1=\{G\in\cF:||G|-n/2|< n^{2/3}\}$.
Then $|\cF\setminus\cF_1| \le |\{G\subseteq [n]: ||G|-n/2|\ge n^{2/3}\}|\le o\left(\binom{n}{\floor{n/2}}\right)$.

For $i=2,3,\dots,h$ we define $\cF_i$ as follows. Let $\varepsilon':=\varepsilon/2h$. A set $F\in \cF_{i-1}$ is in $\cF_i$ if the expected number of sets of $\cF_{i-1}$ in a random chain between $F$ and $[n]$ is at least $1+\varepsilon'$. ($F$ counts for all chains.)

Let $\cG_i:=\cF_{i-1}\setminus \cF_i$. We partition the $n!$ full chains from $\emptyset$ to $[n]$ as follows. Let $\bC_G$ denote the collection of those chains whose smallest member from $\cG_i$ is $G$. Let $\bC_0$ denote the collection of those full chains that avoid $\cG_i$. If we take a random full chain from any of these collections, the expected value of members of $\cG_i$ in it is at most $1+\varepsilon'$. Therefore the expected value taken over all $n!$ full chains is also at most $1+\varepsilon'$. This means that $\lambda_n(\cG_i):=\sum_{F\in \cG_i}\frac{1}{\binom{n}{|F|}}\le 1+\varepsilon'$ and therefore $|\cG_i|\le (1+\varepsilon')\binom{n}{\floor{n/2}}$. These inequalities for $i=2,3,\dots,h$ together imply i) of Lemma~\ref{embedding}, if $\delta_1<\varepsilon/2$.

Consider a set $F\in \cF_{i-1}$. If $F$ has at most $\delta_2 n$ supersets of size between $|F|+1$ and $|F|+h-1$ in $\cF_{i-1}$ and at most $(\delta_2+\delta_3) n^h$ supersets in all of $\cF_{i-1}$, then the expected value of elements of $\cF_{i-1}$ in a chain from $F$ to $[n]$ is 
$$\sum_{F\subsetneq F'\in \cF_{i-1}} \frac{1}{\binom{n-|F|}{|F'\setminus F|}} \le\frac{\delta_2 n}{n-|F|}+\frac{(\delta_2+\delta_3) n^h}{\binom{n-|F|}{h}}\le 3\delta_2 + 3^h h! (\delta_2+\delta_3)$$
which is smaller than $\varepsilon'$ if $\delta_2$ and $\delta_3$ are a sufficiently small positive numbers compared to $h$ and $\varepsilon'$. This means that any $G\in \cF_i$ has either at least $\delta_2 n$ proper supersets of size at most $|G|+h-1$ in $\cF_{i-1}$ or at least $(\delta_2+\delta_3) n^h$ supersets in all of $\cF_{i-1}$.

The above statement trivially implies ii) of Lemma \ref{embedding}. We show that iii) of Lemma \ref{embedding} follows as well. Let $F\in \cF_i$ for some $2\le i \le h$. We need to find at least $\delta_3 n^{i-1}$ supersets of $F$ in $\cF_1$. If it has $(\delta_2+\delta_3) n^h$ supersets in $\cF_{i-1}$ then we are obviously done, so assume that this is not the case. We define a directed graph as follows. $F$ has at least $\delta_2 n$ supersets of size at most $|G|+h-1$ in $\cF_{i-1}$, let us direct an edge from $F$ to all of them. Each of these sets will have at least $\delta_2 n$ supersets of size at most $|G|+2(h-1)$ in $\cF_{i-2}$, direct an edge from the subset to the supersets. Continue this operation until we reach $\cF_1$. Now we have at least $(\delta_2 n)^{i-1}$ directed paths from $G$ to members of $\cF_1$. All these members are of size at most $|G|+(i-1)(h-1)$, therefore the number of paths leading to a single one of them is at most $((i-1)(h-1))^{i-1}<h^{2h}$. Thus we found at least $\frac{(\delta_2 n)^{i-1}}{h^{2h}}$ supersets of $G$ in $\cF_1$. This is more than $\delta_3 n^{i-1}$ if $\delta_{3}$ is sufficiently small compared to $\delta_2$ and $h$. 
\end{proof}

\subsection{Diamond-free families - Theorem \ref{diamond}}

Let us recall that $m_s= \lceil\log_2(s + 2)\rceil$ and $m^*_s=\min\{m: s\le  \binom{m}{\lceil m/2\rceil}\}$ and that for any integer $s\ge 2$, we have $x(D_s)=m_s$ and $x^*(D_s)=m^*_s$.

\begin{thmn}[\ref{diamond}]
\

(i) If $s \in [2^{m_s - 1} -1,2^{m_s} - \binom{m_s}{\lceil \frac{m_s}{2}\rceil}-1]$, then for any $\varepsilon>0$ there exists a $\delta>0$ such that every $\cF\subseteq 2^{[n]}$ with $|\cF|\ge (m_s+\varepsilon)\binom{n}{\lfloor n/2 \rfloor}$ contains at least $\delta \cdot n^{m_s-0.5}\binom{n}{\lfloor n/2 \rfloor}$ copies of $D_s$.

(ii) For any $\varepsilon>0$ there exists a $\delta>0$ such that every $\cF\subseteq 2^{[n]}$ with $|\cF|\ge (4+\varepsilon)\binom{n}{\lfloor n/2\rfloor}$ contains at least $\delta \cdot n^{3.5}\binom{n}{\lfloor n/2 \rfloor}$ induced copies of $D_4$. 

(iii) For any constant $c$ with $1/2<c<1$ there exists an integer $s_c$ such that if $s \ge s_c$ and $s\le c\binom{m^*_s}{\lfloor m^*_s/2\rfloor}$, then the following holds: for any $\varepsilon>0$  there exists a $\delta>0$ such that every $\cF\subseteq 2^{[n]}$ with $|\cF|\ge (m^*_s+\varepsilon)\binom{n}{\lfloor n/2\rfloor}$ contains at least $\delta \cdot n^{m^*_s-0.5}\binom{n}{\lfloor n/2 \rfloor}$ induced copies of $D_s$.
\end{thmn}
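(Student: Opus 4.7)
All three parts follow a common template: a chain double counting combined with the incidence lemma (Lemma~\ref{nonindlem} for (i), Lemma~\ref{plem}(i) for (ii), Lemma~\ref{plem}(ii) for (iii)). I describe (i); parts (ii) and (iii) proceed identically, with the corresponding lemma substituted and $m_s$ replaced by $4$ and $m^*_s$, respectively. Throughout set $m=m_s$.

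\emph{Concentration and interval set-up.} By Lemma~\ref{binom}, discarding from $\cF$ all sets with $\bigl||F|-n/2\bigr|>n^{2/3}$ costs only $o(\binom{n}{\lfloor n/2\rfloor})$, so after this reduction $\lambda_n(\cF)\ge m+\varepsilon/2$. Fix a gap parameter $k\ge m$, and for each ordered pair $(A,C)$ with $A\subsetneq C$ and $|C\setminus A|=k$ let $\cG_{A,C}=\{F\in\cF:A\subseteq F\subseteq C\}$, viewed as a subfamily of $2^{[k]}$ via $F\mapsto F\setminus A$, with weight $w(A,C)=|A|!\,(n-|C|)!$. A direct double count (counting triples $(F,\cC,i)$ with $F\in\cF\cap\cC$ and $i$ the position at which a $k$-length sub-interval of the maximal chain $\cC$ begins) yields
\begin{align*}
\sum_{(A,C):|C\setminus A|=k}w(A,C)\,k!&=(n-k+1)\,n!,\\
\sum_{(A,C):|C\setminus A|=k}w(A,C)\,k!\,\lambda_k(\cG_{A,C})&=(k+1+o(1))\,n!\,\lambda_n(\cF).
\end{align*}
Lemma~\ref{nonindlem} guarantees $\lambda_k(\cG_{A,C})\le m$ whenever $\cG_{A,C}$ is $D_s$-free.

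\emph{Extracting copies.} The second identity forces the Lubell excess $\varepsilon/2$ in $\lambda_n(\cF)$ onto pairs where $\lambda_k(\cG_{A,C})>m$, i.e., where $\cG_{A,C}$ contains a $D_s$ copy. The extraction is delicate: the bound $\lambda_k\le m$ is tight only on intervals with substantial mass, so one localises to pairs $(A,C)\in\cF^2$ (for which $\lambda_k\ge 2$ is automatic) and combines with Theorem~\ref{kchain} (which provides $\Omega(n^m\binom{n}{\lfloor n/2\rfloor})$ 2-chains in $\cF$) to show that a positive proportion of $\cF^2$-pairs carry a $D_s$ copy. Because any fixed $D_s$ copy sits in at most $O(1)$ gap-$k$ intervals (its top and bottom determining $C$ and $A$ up to a constant of choices), this converts to $\Omega(n^{m-1/2}\binom{n}{\lfloor n/2\rfloor})$ copies. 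The $n^{-1/2}$ deficit against the conjectural $n^m$ arises because $w(A,C)$ is peaked in a size-window of width $\sqrt n$ around the median, whereas the $\cF^2$-pairs are spread over the window of width $n^{2/3}$, so translating weights back to pair counts costs a factor $\sqrt n$.

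\emph{Main obstacle.} The hardest step is the bridge between weighted Lubell excess and the number of ``bad'' $\cF^2$-pairs: the global identities above are dominated by the non-$\cF^2$ contribution, so the argument must be sharpened using Theorem~\ref{kchain} as an auxiliary input to ensure that a constant fraction of the Lubell deficit lives on pairs whose endpoints both lie in $\cF$. A matching stability theorem for $D_s$-free families, if available, would eliminate the $\sqrt n$ loss and close the gap to Conjecture~\ref{supersatsejtes}.
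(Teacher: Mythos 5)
Your plan diverges from the paper's approach in a way that leaves the central step unfinished, and you acknowledge this yourself under ``Main obstacle.'' The sliding-window double count (fixed gap $k$, identities over all pairs $(A,C)$ with $|C\setminus A|=k$) spreads the Lubell excess over \emph{all} $k$-gap windows, the vast majority of which have endpoints outside $\cF$; you never show that a constant fraction of that excess concentrates on windows that actually contain a $D_s$, and neither of your two suggested patches (restricting to $\cF^2$-pairs, appealing to Theorem~\ref{kchain}) is carried out or clearly sufficient. There is also a structural mismatch: fixing the gap $k$ is unnatural here, since a $D_s$ inside $\cF$ spans an interval $[F,F']$ whose length $|F'|-|F|$ can be anything $\ge m_s$, so no single window width $k$ captures them all.

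The paper avoids this entirely by using the \emph{min-max partition} of maximal chains: $\bC_n$ is partitioned into classes $\bC_{F,F'}$ indexed by the smallest and largest elements $F\subseteq F'$ of $\cF'$ on the chain, with a leftover class $\bC_\emptyset$ for chains missing $\cF'$. This forces the endpoints $F,F'$ to be in $\cF'$ automatically, allows the interval length to vary, and makes $\bC_\emptyset$ contribute nothing to the incidence count $\sum_F |F|!\,(n-|F|)!$. Lemma~\ref{nonindlem} is then applied interval-by-interval to the classes with $b(F,F')<s$ (where $[F,F']\cap\cF'$ is $D_s$-free), giving at most $m_s n!$ incidences from those; the remaining $\tfrac{\varepsilon}{2}n!$ incidences must come from classes with $b(F,F')\ge s$, i.e.\ intervals that contain a $D_s$. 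The $\sqrt{n}$ factor in the final bound comes from the truncation $\binom{n}{\ell}\ge\frac{\varepsilon}{10\sqrt{n}}\binom{n}{\lfloor n/2\rfloor}$, which bounds $|F|!\,(n-|F|)!$ for every surviving $F$ and feeds into an estimate of $|\bC_{F,F'}|$ for a single ``good'' class — a cleaner mechanism than the $\sqrt{n}$ heuristic you offer. You identified the right ingredients (Lemmas~\ref{nonindlem} and~\ref{plem}, the Lubell function, a double count over chains), but the decisive device — partitioning by min/max $\cF'$-endpoints rather than fixed-length windows — is what actually closes the argument, and without it the extraction step remains a conjecture inside your proof.
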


\begin{proof}
To prove (i), let us fix $\varepsilon>0$ and let $\cF\subseteq 2^{[n]}$ such that $|\cF|\ge (m_s+\varepsilon)\binom{n}{\lfloor n/2\rfloor}$. Let $\ell$ be the maximum integer such that $\binom{n}{\ell}\le \frac{\varepsilon}{8\sqrt{n}}\binom{n}{\lfloor n/2\rfloor}$. Then by Lemma \ref{binom}, we have $|\binom{[n]}{\le \ell}\cup \binom{[n]}{\ge n-\ell}|\le \frac{\varepsilon}{2}\binom{n}{\lfloor n/2\rfloor}$ and it is easy to verify that $\binom{n}{\ell}\ge \frac{\varepsilon}{10\sqrt{n}}\binom{n}{\lfloor n/2\rfloor}$ if $n$ is large enough. This implies that 
\begin{enumerate}
    \item 
    $\cF'=\cF \setminus (\binom{[n]}{\le \ell}\cup \binom{[n]}{\ge n-\ell})$ is of size at least $(m_s+\varepsilon/2)\binom{n}{\lfloor n/2\rfloor}$, in particular \\ $\sum_{F\in \cF}|F|!(n-|F|)!\ge (m_s+\varepsilon/2)n!$;
    \item
    for any $F\in \cF'$, we have $|F|!(n-|F)!\le \frac{10}{\varepsilon}\sqrt{n}\lfloor n/2\rfloor !\lceil n/2\rceil !$.
\end{enumerate}
We are going to count pairs $(F,\cC)$ with $F\in \cF'\cap \cC$ and $\cC$ is a maximal chain in $[n]$. On the one hand, using the first point above, this is clearly $\sum_{F\in \cF}|F|!(n-|F|)!\ge (m_s+\varepsilon/2)n!$. To count the pairs in another way, we will use the min-max partition of the maximal chains. Let $\bC_n$ denote the set of all $n!$ maximal chains in $[n]$, and let us partition $\bC_n$ into $\cup_{F,F'}\bC_{F,F'}$, where $F,F'$ run through all pairs $F\subseteq F'$ in $\cF'$ and $$\bC_{F,F'}:=\{\cC\in \bC_n: F ~\text{is minimal in} ~\cF'\cap \cC, F' ~\text{is maximal in} ~\cF'\cap \cC\}.$$
Those maximal chains that do not contain any $F\in \cF'$ are gathered in $\bC_\emptyset$. For any pair $F\subseteq F'$ in $\cF'$ let us write $b(F,F')=|\cF' \cap \{G: F\subsetneq G \subsetneq F'\}|$. Observe that the number of copies of $D_s$ in $\cF'$ is at least $\sum_{F,F'}\binom{b(F,F')}{s}$, in particular this number is at least $|\{(F,F'):b(F,F')\ge s\}|$. Finally, let $\bC_j=\bigcup_{b(F,F')=j}\bC_{F,F'}$ and $\bC_{<s}=\bC_\emptyset \cup\bigcup_{j=0}^{s-1}\bC_j$ and $\bC_{\ge s}=\bC_n\setminus \bC_{<s}$.

As $b(F,F')<s$ is equivalent to $[F,F']\cap \cF'$ being $D_s$-free, using Lemma \ref{nonindlem}, we obtain that the number of pairs $(F,\cC)$ with $F\in \cF'\cap \cC$ and $\cC\in \bC_{<s}$ is at most $m_sn!$. As a consequence, the number of pairs $(F,\cC)$ with $F\in \cF'\cap \cC$ and $\cC\in \bC_{\ge s}$ is at least $\frac{\varepsilon}{2}n!$. 
Observe that $|\bC_{F,F'}|\le |F|!(|F'|-|F|)!(n-|F')!$ and thus the number of pairs $(F'',\cC)$ with $F''\in \cF'\cap \cC$ and $\cC\in \bC_{F,F'}$ is at most $(|F'|-|F|+1)|F|!(|F'|-|F|)!(n-|F')!$. Also if $b(F,F')\ge s$, then $|F'|-|F|\ge m_s$. Fixing $|F|$ and using $\ell \le |F|,|F'|\le n-\ell$, it is easy to see that $(|F'|-|F|+1)|F|!(|F'|-|F|)!(n-|F'|)!$ is convex in $|F'|$ and thus it is maximized either at $|F'|-|F|=m_s$ or when $|F'|=n-\ell$. Plugging in one obtains that the maximum is taken when $|F'|-|F|=m_s$. Using 2. from above, we obtain that for one fixed $\bC_{F,F'} \subseteq \bC_{\ge s}$ the number of pairs $(F'',\cC)$ with $F''\in \cF'\cap \cC$ and $\cC\in \bC_{F,F'}$ is at most
\[
(m_s+1)m_s!|F|!(n-|F|-m_s)!\le (m_s+1)m_s!\frac{4^{m_s}}{n^{m_s}}|F|!(n-|F|)!\le (m_s+1)m_s!\frac{4^{m_s}10}{\varepsilon n^{m_s-0.5}}\lfloor n/2\rfloor!\lceil n/2\rceil!.
\]
Therefore the number of pairs $F,F'$ with $b(F,F')\ge s$ and thus the number of copies of $D_s$ in $\cF'$ is at least
\[
\frac{\varepsilon^2}{20(m_s+1)!4^{m_s}}n^{m_s-0.5}\binom{n}{\lfloor n/2\rfloor}
\]
as claimed. 

The proofs of (ii) and (iii) are basically the same. Instead of $b(F,F')$ one introduces $a(F,F')$, which is the maximum size of an antichain in $\cF'\cap [F,F']$, and partitions $\bC_n$ into $\bC_{<s}$ and $\bC_{\ge s}$ according to $a(F,F')$. If $a(F,F')<s$, then $\cF'\cap [F,F']$ is induced $D_s$-free, so for (ii), instead of Lemma \ref{nonindlem}, one applies Lemma \ref{plem} (i), and for (iii), one applies Lemma \ref{plem} (ii) to $\bC_{<s}$. The number of induced copies of $D_s$ is at least the number of pairs with $a(F,F')\ge s$ and the computation to obtain a lower bound for this number is the same as in (i).
\end{proof}

\textbf{Acknowledgement.} Research partially sponsored by the National Research, Development and Innovation Office -- NKFIH under the grants K 116769, K 132696, KH 130371, SNN 129364, FK 132060, and KKP-133819. Research of Vizer was supported by the J\'anos Bolyai Research Fellowship of the Hungarian Academy of Sciences and by the New National Excellence Program under the grant number \'UNKP-20-5-BME-45. Patk\'os acknowledges the financial support from the Ministry of Education and Science of the Russian Federation in the framework of MegaGrant no 075-15-2019-1926.

\bibliographystyle{abbrv}
\bibliography{bibo}

\begin{thebibliography}{10}

\bibitem{BalMycTre14}
J.~Balogh, R.~Mycroft, and A.~Treglown.
\newblock A random version of {S}perner's theorem.
\newblock {\em Journal of Combinatorial Theory, Series A}, 128:104--110, 2014.

\bibitem{BalWag18}
J.~Balogh and A.~Z. Wagner.
\newblock Kleitman's conjecture about families of given size minimizing the
  number of $k$-chains.
\newblock {\em Advances in Mathematics}, 330:229--252, 2018.

\bibitem{Buk09}
B.~Bukh.
\newblock Set families with a forbidden subposet.
\newblock {\em The Electronic Journal of Combinatorics}, 16(1):R142, 2009.

\bibitem{CarKat08}
T.~Carroll and G.~O.~H. Katona.
\newblock Bounds on maximal families of sets not containing three sets with
  ${A}\cap {B}\subseteq {C}$, ${A}\not\subseteq {B}$.
\newblock {\em Order}, 25(3):229--236, 2008.

\bibitem{DasGanSud15}
S.~Das, W.~Gan, and B.~Sudakov.
\newblock Sperner's theorem and a problem of {E}rd{\H{o}}s, {K}atona and
  {K}leitman.
\newblock {\em Combinatorics, Probability and Computing}, 24(4):585--608, 2015.

\bibitem{DovGriKan14}
A.~P. Dove, J.~R. Griggs, R.~Kang, and J.-S. Sereni.
\newblock Supersaturation in the {B}oolean lattice.
\newblock {\em Integers: Electronic Journal of Combinatorial Number Theory},
  14:1--7, 2014.

\bibitem{Erd45}
P.~Erd{\H{o}}s.
\newblock On a lemma of {L}ittlewood and {O}fford.
\newblock {\em Bulletin of the American Mathematical Society}, 51(12):898--902,
  1945.

\bibitem{GerPat18}
D.~Gerbner and B.~Patk{\'o}s.
\newblock {\em Extremal finite set theory}.
\newblock CRC Press, 2018.

\bibitem{GriLi13}
J.~R. Griggs and W.-T. Li.
\newblock The partition method for poset-free families.
\newblock {\em Journal of Combinatorial Optimization}, 25(4):587--596, 2013.

\bibitem{GriLiLu12}
J.~R. Griggs, W.-T. Li, and L.~Lu.
\newblock Diamond-free families.
\newblock {\em Journal of Combinatorial Theory, Series A}, 119(2):310--322,
  2012.

\bibitem{GriLu09}
J.~R. Griggs and L.~Lu.
\newblock On families of subsets with a forbidden subposet.
\newblock {\em Combinatorics, Probability and Computing}, 18(5):731--748, 2009.

\bibitem{viking}
K.~A. Hogenson.
\newblock {\em Random and deterministic versions of extremal poset problems}.
\newblock PhD thesis, Iowa State University, 2016.

\bibitem{KatTar83}
G.~O.~H. Katona and T.~G. Tarj{\'a}n.
\newblock Extremal problems with excluded subgraphs in the $n$-cube.
\newblock In {\em Graph Theory}, pages 84--93. Springer, 1983.

\bibitem{Kle66}
D.~Kleitman.
\newblock A conjecture of {E}rd{\H{o}}s-{K}atona on commensurable pairs among
  subsets of an $n$-set.
\newblock In {\em Theory of Graphs, Proceedings of Colloquium held at Tihany,
  Hungary}, pages 187--207, 1966.

\bibitem{Kle69}
D.~Kleitman.
\newblock On {D}edekind's problem: the number of monotone {B}oolean functions.
\newblock {\em Proceedings of the American Mathematical Society},
  21(3):677--682, 1969.

\bibitem{Kor81}
A.~D. {K}orshunov.
\newblock The number of monotone {B}oolean functions (in {R}ussian).
\newblock {\em Problemy Kibernetiki}, 38:5--108, 1981.

\bibitem{ColMor16}
M.~C. Neto and R.~Morris.
\newblock Maximum-size antichains in random set-systems.
\newblock {\em Random Struct. Algorithms}, 49(2):308--321, 2016.

\bibitem{Pat15b}
B.~Patk{\'o}s.
\newblock Induced and non-induced forbidden subposet problems.
\newblock {\em Electronic Journal of Combinatorics}, 22(1):P1--30, 2015.

\bibitem{Pat15}
B.~Patk{\'o}s.
\newblock Supersaturation and stability for forbidden subposet problems.
\newblock {\em Journal of Combinatorial Theory, Series A}, 136:220--237, 2015.

\bibitem{Sam19}
W.~Samotij.
\newblock Subsets of posets minimising the number of chains.
\newblock {\em Transactions of the American Mathematical Society},
  371(10):7259--7274, 2019.

\bibitem{Spe28}
E.~Sperner.
\newblock Ein {S}atz {\"u}ber {U}ntermengen einer endlichen {M}enge.
\newblock {\em Mathematische Zeitschrift}, 27(1):544--548, 1928.

\end{thebibliography}
\end{document}